\documentclass[11pt]{article}

\usepackage{booktabs}
\usepackage{multirow}
\usepackage{subfigure}
\usepackage{graphicx}           
\usepackage{caption}
\usepackage{tikz}
\usepackage{tikz-qtree} 
\usepackage{amsfonts}
\usepackage{mathdots}
\usepackage{amsmath}
\usepackage{amsthm} 
\usepackage{amssymb}
\usepackage{glossaries} 
\usepackage{mathtools}
\usepackage{enumerate} 
\usepackage{stmaryrd} 
\usepackage{xcolor}
\usepackage{bm}

\usepackage[margin=1.5in]{geometry}

%\setlength{\textwidth}{5.8in}

%\numberwithin{equation}{section}

\DeclarePairedDelimiter\abs{\lvert}{\rvert}%
\DeclarePairedDelimiter\norm{\lVert}{\rVert}%

% notations 
% general math 
\newcommand{\RR}{\mathbb{R}}

\newcommand{\onehalf}{1/2}
\newcommand{\vect}[1]{\boldsymbol{#1}}
\newcommand{\normalvect}{\boldsymbol{n}}
\newcommand{\vne}{\boldsymbol{n}_e}

\newcommand{\vecta}{\vect{a}}

% PDE
\newcommand{\ddiv}{\text{div}\,}
\newcommand{\grad}{\nabla}

\newcommand{\bdry}{\partial\Omega}
\newcommand{\diribd}{\Gamma_D}
\newcommand{\neubd}{\Gamma_N}

% kappa
\newcommand{\kpa}{\epsilon^{-1/2}\beta^{1/2}}

\newcommand{\kpaneg}{\epsilon^{1/2}\beta^{-1/2}}

\newcommand{\gammaK}{\gamma_{_K}}
\newcommand{\gammae}{\gamma_e}

% alpha 
\newcommand{\almax}{\alpha_{\max}}
\newcommand{\almin}{\alpha_{\min}}

\newcommand{\alrootn}{\alpha^{-\onehalf}}
\newcommand{\alphaK}{\alpha_{\!_K}}
\newcommand{\alphaKenb}{\alpha_{\!_{\kenb}}}

% data

\newcommand{\gN}{g_{_N}}
\newcommand{\fbar}{\bar{f}}

\newcommand{\gbar}{\bar{g}_{_N}}

\newcommand{\ghatK}{\hat{g}_{_K}}

% element residual
\newcommand{\rhatK}{\hat{r}_K}

% flux

\newcommand{\fluxmesh}{\hat{\boldsymbol{\sigma}}_{\sb{\mathcal{T}}}}
\newcommand{\fluxh}{\boldsymbol{\sigma}_{\sb{\mathcal{T}}}}

\newcommand{\fluxt}{\boldsymbol{\tau}}

% spaces 

\newcommand{\twonorm}[1]{\lVert #1 \rVert}

\newcommand{\ltwox}[1]{L^2(#1)}

\newcommand{\honeD}{H^1_D(\Omega)}
\newcommand{\hdiv}{H(\text{div};\Omega)}
\newcommand{\hdivk}[1]{H(\text{div};#1)}

% operators associated with polynomial spaces

\newcommand{\PiKkneg}{\Pi_K^{k-1}}

% finite element approximation
\newcommand{\enorm}[1]{{\left\vert\kern-0.25ex\left\vert\kern-0.25ex\left\vert #1 
    \right\vert\kern-0.25ex\right\vert\kern-0.25ex\right\vert}}
\newcommand{\enormx}[2]{{\left\vert\kern-0.25ex\left\vert\kern-0.25ex\left\vert #1 
    \right\vert\kern-0.25ex\right\vert\kern-0.25ex\right\vert}_{ #2 }}

\newcommand{\uapprox}{u_{\sb{\meshth}}}
\newcommand{\vapprox}{v_{\sb{\meshth}}}

\newcommand{\Vapprox}{V_{\meshth}}

% partition of unity

% mesh 
\newcommand{\meshth}{{\mathcal{T}}}

\newcommand{\vertex}{\mathcal{N}}
\newcommand{\edge}{\mathcal{E}}
\newcommand{\edgek}{\mathcal{E}_K}
\newcommand{\intiedge}{\mathcal{E}_I}
\newcommand{\diriedge}{\mathcal{E}_D}
\newcommand{\neuedge}{\mathcal{E}_N}

\newcommand{\kenb}{K_e'}
\newcommand{\kepos}{K_e^+}
\newcommand{\keneg}{K_e^-}

% RT spaces and associated operators

% RT and BDM spaces 
\newcommand{\RT}{\mathrm{RT}}

% Swap the definition of \abs* and \norm*, so that \abs
% and \norm resizes the size of the brackets, and the 
% starred version does not.
\makeatletter
\let\oldabs\abs
\def\abs{\@ifstar{\oldabs}{\oldabs*}}
\let\oldnorm\norm
\def\norm{\@ifstar{\oldnorm}{\oldnorm*}}
\makeatother

\newtheorem{theorem}{Theorem}[section]
\newtheorem{lemma}{Lemma}[section]

\newtheorem{remark}{Remark}[section]  
%\newsiamremark{remark}{Remark}

\title{Hybrid A Posteriori Error Estimators for Conforming Finite Element Approximations to Stationary Convection-Diffusion-Reaction equations}
\author{Difeng Cai \and Zhiqiang Cai}

\date{}

\begin{document}
\maketitle

\begin{abstract}
We consider the a posteriori error estimation for convection-diffusion-reaction equations in both diffusion-dominated and convection/reaction-dominated regimes.
We present an explicit hybrid estimator, which, in each regime, is proved to be
reliable and efficient with constants independent of the parameters in the underlying problem.
%equivalent to the robust residual estimator with constants independent of parameters of the underlying problem.
%Thus the robustness of the residual estimator carries over to the hybrid estimator.
%the hybrid estimator is robust with respect to the size of convection and the discontinuity of the diffusion coefficient in the respective convection-dominated and diffusion-dominated regimes.
For convection-dominated problems, the norm introduced by Verf{\"u}rth \cite{verf2005confusion} is used to measure the approximation error.
Various numerical experiments are performed to $(1)$ demonstrate the robustness of the hybrid estimator; $(2)$ show that the hybrid estimator is more accurate than the explicit residual estimator and is less sensitive to the size of reaction, even though both of them are robust.
\end{abstract}

\section{Introduction} % (fold)
The a posteriori error estimation has been an indispensable tool 
in handling computationally challenging problems.
%where robust and accurate a posteriori error estimators are especially preferred
%in order to efficiently reduce the approximation error during the adaptive mesh refinement procedure.
For elliptic partial differential equations (PDEs) consisting of terms characterizing diffusion, reaction and convection,
the solution may display a strong interface singularity,
interior or boundary layers, etc.,
due to discontinuous coefficients or terms in significantly different scales, etc.
Consequently,
it is a challenging task to design a \emph{general} a posteriori error estimator
that is accurate and robust enough
to resolve local behaviors of the exact solution without spending much computational resource.

Explicit residual estimators are directly related to the error and have been well-studied since 1970s
(cf. \cite{babuska1978,babuska1987,verf1994,bernardi2000,petzoldt2002,verf1998reaction,verf1998confusion,verf2005confusion,verf2015confusion}).
They are easy to compute, applicable to a large class of problems, valid for higher order elements, etc.
More importantly, 
\emph{robust} residual estimators have been proposed for various problems.
For diffusion problems with discontinuous coefficient, \cite{bernardi2000,petzoldt2002} established robustness with respect to coefficient jump 
under the monotonicity or quasi-monotonicity assumption of the diffusion coefficient.
For singularly perturbed reaction-diffusion problems, 
Verf{\"u}rth \cite{verf1998reaction} pioneered a residual estimator that is robust with respect to the size of reaction.
For convection-dominated problems,
it is still under debate on how to choose a suitable norm to measure the approximation error
(cf. \cite{stynes2005confusion, verf1998confusion, sangalli2001confusion, sangalli2005norm, verf2005confusion, sangalli2008confusion}).
Sangalli \cite{sangalli2005norm,sangalli2008confusion} proposed a norm 
incorporating the standard energy norm and
a seminorm of order $1/2$ and developed the a posteriori error analysis in the one-dimensional setting.
Verf{\"u}rth \cite{verf2005confusion} introduced
a norm incorporating the standard energy norm and
a dual norm of the convective derivative.
With respect to this norm, the explicit residual estimator in \cite{verf2005confusion} was proved to be robust.
Moreover, it was shown in \cite{verf2015confusion} that the framework of residual estimators is applicable for various stabilization schemes.
%thanks to the work by Verf{\"u}rth and others 
%(cf. \cite{bernardi2000,petzoldt2002,verf1998reaction,verf1998confusion,verf2005confusion,sangalli2008confusion}),
% singularly perturbed reaction-diffusion problems, convection-diffusion problems, etc.
Those developments make residual estimators competitive when dealing with challenging problems.
One drawback, however,
is that residual estimators tend to overestimate the true error by a large margin (cf. \cite{carstensen2010competition,localL2}).
This calls for the need of an estimator as general as the residual estimator but with improved accuracy.

Recovery-based estimators,
e.g., the Zienkiewicz–Zhu (ZZ) estimator and its variations
(cf. \cite{ZZ1987,ZZ1992,purdue1972,carstensen2004,nagazhang2005,bankxuzheng,ZZsafeguard}, etc.),
are quite popular in the engineering community.
However, unlike residual estimators,
the robustness of those estimators
with respect to issues like coefficient jump, dominated convection or reaction, etc.,
has not been emphasized or studied in detail yet (cf. \cite{ovall2006}).
On coarse meshes, it is known that ZZ-type estimators are in general unreliable,
and counterexamples can be easily constructed where the estimator vanishes but the true error is large (cf. \cite{ainsworthoden2000book,localL2}).
For linear elements, \cite{ZZsafeguard,ZZ2006} 
adds two additional terms (one of them is the element residual) to the ZZ estimator to ensure reliability on coarse meshes.
For higher order elements, however, a straightforward extension of the original ZZ estimator \cite{ZZ1987,ZZ1992}
usually fails and developing a viable estimator is nontrivial.
For example, Bank, Xu, and Zheng in \cite{bankxuzheng} recently introduced a recovery-based estimator for Lagrange triangular elements of degree $p$, 
and their estimator requires recovery of all partial derivatives of $p^{\rm th}$ order
instead of the gradient.

Recently, the so-called \emph{hybrid estimator} was introduced in \cite{localL2,caizhang2010} for diffusion problems with discontinuous coefficients.
The explicit hybrid estimator shares all advantages of the robust residual estimator \cite{bernardi2000,petzoldt2002}
and numerical results indicate that the hybrid estimator is more accurate than the residual estimator (cf. \cite{localL2}).
This opens a door of finding an alternative of the residual estimator with improved accuracy.
Thus one may ask if it is possible to construct hybrid estimators for more general problems
and if the hybrid estimator is still more accurate than the residual estimator.

In this manuscript,
we introduce the hybrid estimator as well as flux recoveries for convection-diffusion-reaction equations.
In diffusion-dominated regime,
the flux recovery as well as the hybrid estimator is a natural extension of the one in \cite{localL2}.
In convection/reaction-dominated regime,
the flux recovery in each element depends on the size of diffusion.
Roughly speaking,
in elements with resolved diffusion (see Section \ref{sub:flux1}),
%(i.e., element size is comparable to the size of diffusion),
the recovered flux is same to the diffusion-dominated case;
in elements where diffusion is not resolved,
inspired in part by the method of Ainsworth and Vejchodsk{\'y} \cite[Section 3.4]{ainsworth2011confusion},
the recovered flux is defined piecewisely in each element (see Section \ref{sub:flux2}).
The hybrid estimator in the convection/reaction-dominated regime is analogously defined as in \cite{localL2} with proper weights from \cite{verf2005confusion}.
In each regime, we prove that the hybrid estimator is equivalent to the robust residual estimator (for example, \cite{verf2005confusion} for the convection/reaction-dominated regime) and
then the robustness follows immediately from that of the residual estimator.
The hybrid estimator is explicit and valid for higher order elements.
Various numerical results show that, compared to the explicit residual estimator,
the hybrid estimator is more accurate and the corresponding effectivity index is less sensitive to the size of reaction.

The rest of the manuscript is organized as follows.
The model problems and finite element discretizations are introduced in Section \ref{sec:modelConfusion}. 
Section \ref{sec:etaConfusion} collects results on robust residual estimators.
After the flux recovery presented in Section \ref{sec:fluxConfusion},
the hybrid estimator is defined in Section \ref{sec:hybridConfusion} along with robust a posteriori error estimates.
Section \ref{sec:proofConfusion} gives the proof of the local equivalence between
the residual estimator and the hybrid estimator.
Numerical results are shown in Section \ref{sec:numericalConfusion}.
%%%%%%%%%%%% section  (end) %%%%%%%%%%%%%%%%

\section{Problems and Discretizations} % (fold)
\label{sec:modelConfusion}
Let $\Omega$ be a polygonal domain in $\mathbb{R}^d\,$ ($d=2,\, 3$) with Lipschitz boundary $\bdry$ consisting of two disjoint components $\diribd$ and $\neubd$.
By convention, assume that $\text{diam}(\Omega) = O(1)$. 
Consider the stationary convection-diffusion-reaction equation:
\begin{equation}
\label{eq:modelConfusion}
    \left\{ \begin{alignedat}{2}
    -\text{div}(\alpha\grad u) + \vecta\cdot\grad u + bu &= f,\quad && \text{in} \;\; \Omega,\\
  u &= 0, \quad && \text{on} \;\; \diribd,\\
  -\alpha\grad u\cdot\normalvect &= \gN, \quad && \text{on} \;\; \neubd,
    \end{alignedat} \right.
\end{equation}
with $\alpha(x) \geq \delta$, for almost all $x \in\Omega$ and for some constant $\delta>0$.
Assume that:
\begin{enumerate}[({A}1)]
    \item $\vecta\in W^{1,\infty}(\Omega)^d$ and  $b\in L^{\infty}(\Omega)$;
    \item there are two constants $\beta\geq 0$ and $c_b\geq 0$,
        independent of $\alpha$, such that 
\[
    b-\frac{1}{2}\ddiv\, \vecta \geq \beta \;\text{in}\; \Omega
        \quad \text{and} \quad \twonorm{b}_{\infty} \leq c_b\beta;
\]
    \item $\text{meas}(\diribd)>0$ and $\diribd$ contains the inflow boundary
\[
   \{ x\in\bdry: \vecta(x)\cdot \normalvect(x) < 0 \}.
\]
\end{enumerate}
Depending on the magnitude of the $\alpha$ (with respect to $\vecta$ and $b$),
two regimes are studied in this paper:
\begin{enumerate}
    \item \textbf{diffusion-dominated regime}: 
    there exists a constant $C_b \geq 0$ such that 
\[
    |\vecta(x)/\alpha(x)|\leq C_b
    \quad\text{and}\quad |b(x)/\alpha(x)|\leq C_b 
\quad \text{for almost all } x\in\Omega;
\]
%$|\vecta(x)/\alpha(x)|=O(1)$ and $|b(x)/\alpha(x)|=O(1)$, for almost all $x\in\Omega$;
    \item \textbf{convection/reaction-dominated regime}: 
    $\alpha(x)\equiv\epsilon\ll 1$ for a constant $\epsilon>0$.
    This is the so-called singularly perturbed problem.
\end{enumerate}
Let
\[
    \honeD:= \{v\in H^1(\Omega):v|_{\diribd}=0\}.
\]
Define the bilinear form on $\honeD$ by
\[
    B(u,v) := (\alpha\grad u, \grad v) + (\vecta\cdot\grad u, v) + (bu,v) ,\quad \forall\, u,v\in \honeD,
\]
where $(\cdot,\cdot)_S$ denotes the $L^2$ inner product on set $S$ and the subscript $S$ is omitted when $S=\Omega$. The $L^2$ norm on $S$ is denoted by $\twonorm{\cdot}_S$.

The weak formulation of \eqref{eq:modelConfusion} is to find $u\in\honeD$ such that
\begin{equation}
\label{eq:weakform}
    B(u,v) = (f,v) - (\gN,v)_{\neubd}, \quad \forall\, v\in\honeD.
\end{equation}
It follows from integration by parts and the assumptions in (A2) and (A3) that, for any $v\in\honeD$,
\begin{equation}
\label{eq:bgraduu}
    (\vecta\cdot\grad v,v)+(bv,v) = 
\frac{1}{2}(v^2, \vecta\cdot\normalvect)_{\neubd} + (v^2,b-\frac{1}{2}\ddiv\,\vecta) 
\geq \beta\twonorm{v}^2,
\end{equation}
where $\normalvect$ denotes the unit outward vector normal to $\neubd$.
The energy norm induced by $B(\cdot,\cdot)$ is defined by
\[
    \enorm{v} = \left(\twonorm{\alpha^{1/2}\grad v}^2 + \beta\twonorm{v}^2\right)^{1/2},
    \quad \forall\, v\in\honeD,
\]
where $\enorm{\cdot}_S$ denotes the energy norm over $S$ and
the subscript $S$ is omitted when $S=\Omega$.

%------------------   Mesh info   ---------------
Let $\meshth$ be a regular triangulation of $\Omega$ (see, e.g., \cite{ciarletbook}).
Define the following sets associated with the triangulation $\meshth$:
\[
\begin{aligned}
    \vertex &: \text{ the set of all vertices},\\
    \edge   &: \text{ the set of all edges} (d=2) / \text{faces} (d=3),\\
    \intiedge &: \text{ the set of all interior edges} (d=2) / \text{faces} (d=3),\\
    \diriedge&: \text{ the set of all edges} (d=2) / \text{faces} (d=3) \text{ on } \diribd,\\
    \neuedge&: \text{ the set of all edges}  (d=2) / \text{faces} (d=3) \text{ on } \neubd,\\
    \edge_K &: \text{ the set of edges} (d=2) / \text{faces} (d=3) \text{ in an element } K\in\meshth.
\end{aligned}
\]
For a simplex $S\in\meshth\cup\edge$,
denote by $|S|$ and $h_S$ its measure and diameter, respectively.
Denote by $R_K$ the inradius of $K\in\meshth$.
The shape regularity of the triangulation requires the existence of a generic constant $C_0 > 1$ 
such that
\begin{equation}
\label{eq:shaperegular}
   h_K\leq C_0 R_K,\quad \forall\, K\in\meshth
\end{equation}
holds true for each mesh $\meshth$ in the adaptive mesh refinement procedure.

We associate each $e\in\edge$ with a unit normal $\vne$, which is chosen as the unit outward normal if $e\subset \bdry$.
Denote by $\kepos$ and $\keneg$ the two elements
sharing $e$ such that the unit outward normal of $\kepos$ on $e$ coincides with $\vne$.
For $e\in\intiedge\cap\edgek$, let $\kenb$ denote the element next to $K$
sharing $e$ in common. 
Let $\omega_e$ be the union of elements adjacent to $e\in\edge$
and $\omega_K$ be the union of elements that share at least one face with $K\in\meshth$.
Unless otherwise stated,
$\normalvect$ always denotes the unit outward normal vector on $\partial K$.

For $k=0,1,2,\dots$,
let $P_k(S)$ denote the set of polynomials of degree at most $k$ on $S\in\meshth\cup\edge$
and $\Pi_{S}^{k}$ denote the $L^2(S)$-projection onto $P_k(S)$.
Define the conforming finite element space of order $k$ ($k\geq 1$) by
\[
\begin{aligned}
%    S_{\meshth}^{k,-1} &:= \{v\in\ltwo: v|_K\in P_k(K),\; \forall K\in\meshth\},\\ 
%    S_{\meshth}^{k,0} &:= S_{\meshth}^{k,-1} \cap C(\Omega),\quad k\geq 1,\\
    \Vapprox &:= \{v\in C(\Omega): v|_K\in P_k(K),\; \forall K\in\meshth, \text{ and } v|_{\diribd} = 0\}.
\end{aligned}
\]
For each $K\in\meshth$, the Raviart-Thomas space of index $k-1$ ($k\geq 1$) is
\begin{equation*}
    \RT_{k-1}(K) := \left\{ \fluxt\in \ltwox{K}^d: \fluxt = \vect{p}+\vect{x}q,\;
        \vect{p}\in  P_{k-1}(K)^d,\; q\in P_{k-1}(K) \right\}.
\end{equation*}
The projected data $\fbar$ and $\gbar$ are defined by
\[
    \fbar|_K := \PiKkneg f,\quad \forall\, K\in\meshth
    \quad \text{and} \quad
    \gbar|_e := \PiKkneg \gN, \quad \forall\, e\in\neuedge,
\]
respectively.

%---------------    FEM for two regimes    ---------------
The standard finite element approximation for problem \eqref{eq:modelConfusion} is to find $\uapprox\in\Vapprox$ such that
\begin{equation}
\label{eq:feformDiff} 
    B(\uapprox,\vapprox) = (f,\vapprox)-(\gN,\vapprox)_{\neubd},\quad \forall\, \vapprox\in \Vapprox.
\end{equation}

In the case that the convection is dominant, one often adds a stabilization term along the convective direction. 
For example, the so-called SUPG method in \cite{SUPG1992}
is to find $\uapprox\in\Vapprox$ such that
\begin{equation}
\label{eq:feformConv}
    B_{\delta}(\uapprox,\vapprox) = l_{\delta}(\vapprox),\quad \forall\, \vapprox\in\Vapprox,
\end{equation}
where the stabilized bilinear and linear forms are given by
\[
    B_{\delta}(\uapprox,\vapprox) =
    B(\uapprox,\vapprox) + \sum_{K\in\meshth} 
        \delta_K(-\epsilon\Delta\uapprox+\vecta\cdot\grad\uapprox+b\uapprox, \vecta\cdot\grad\vapprox)_K
\]
for all $\uapprox,\vapprox\in \Vapprox$ and 
\[
    l_{\delta}(\vapprox) = (f,\vapprox)-(\gN,\vapprox)_{\neubd} + \sum_{K\in\meshth} \delta_K(f, \vecta\cdot\grad\vapprox)_K
\]
for all $\vapprox\in\Vapprox$,
respectively.
Here, the stabilization parameters $\delta_K$ are nonnegative and satisfy
\[
    \delta_K\twonorm{\vecta}_{L^{\infty}(K)} \leq Ch_K, \quad \forall\, K\in\meshth.
\]
%It is known that \eqref{eq:feform} is well-posed under the previous assumptions on $\vecta,b,\diribd$ (cf.\cite{verf2005confusion}).
To measure the convective derivative,
the following dual norm was used in \cite{verf2005confusion}:
\[
    \enorm{\phi}_* = \sup_{v\in \honeD\backslash\{0\}} \dfrac{\langle\phi,v\rangle}{\enorm{v}},
\]
where $\phi$ is in the dual space of $\honeD$ and $\langle \cdot,\cdot \rangle$ denotes the duality pairing.
Following \cite{verf2005confusion},
the dual norm will be combined with the energy norm to measure the approximation error 
in the convection-dominated regime.
%A detailed discussion on the residual-based estimator for different stabilization schemes can be found in \cite{verf2015confusion}. 

To keep the exposition simple, we ignore data oscillation in coefficients
by further assuming that
for each $K\in\meshth$,
$\alphaK := \alpha|_K$, $\vecta|_K$, and $b|_K$ are constants.
The algorithm and analysis remain valid without this assumption 
if we replace those quantities with their proper projections 
and add the corresponding oscillation error in the estimates.

Define
\[
    \alpha_e := \max_{K\subseteq\omega_e} \alphaK,\quad
    \almax := \max_{K\in\meshth} \alphaK, \quad \text{and} \quad \almin:=\min_{K\in\meshth} \alphaK.
\]

For diffusion-dominated case,
we are interested in the case where $\alpha$ may be discontinuous 
and the hybrid estimator is robust with respect to the discontinuity.
For convection/reaction-dominated case ($\alpha=\epsilon\ll 1$), 
we design a hybrid estimator that is robust with respect to $\epsilon$ and $\beta$
in appropriate norms.
%(in terms of the norm used in \cite{verf2005confusion}).}

%%%%%%%%%%%% section  (end) %%%%%%%%%%%%%%%%

\section{Residual Estimator} % (fold)
\label{sec:etaConfusion}
Let $\fluxh=-\alpha\grad \uapprox$ be the numerical flux,
then the element residual $r_K\in\ltwox{K}$ and the flux jump across $e\in\edge$ are given by
\begin{equation}
\label{eq:rK} 
   r_K := \fbar-\vecta\cdot\grad\uapprox-b\uapprox-\ddiv\fluxh
\end{equation}
and
\begin{equation}
\label{eq:jeconfusion}
    j_e := \begin{cases}
        (\fluxh|_{\kepos}-\fluxh|_{\keneg})\cdot\vne, &\text{if } e\in\intiedge,\\[2mm]
        \fluxh\cdot\normalvect-\gbar, &\text{if } e\in\neuedge,\\[2mm]
        0, &\text{if } e\in\diriedge,
    \end{cases}
\end{equation}
respectively.
For $S\in \meshth\cup\edge$, define the weight $\gamma_{_S}$ as below
\begin{equation}
\label{eq:gamma}
%    \tau_{_S} = \min \{h_S\epsilon^{-1/2},\beta^{-1/2}\} ,\quad \forall\, S\in \meshth\cup\edge,
    \gamma_{_S} = 
\begin{cases}
    1, &\text{in diffusion-dominated regime},\\
    \min \{1, h_S^{-1}\alpha_{_S}^{1/2}\beta^{-1/2}\}, &\text{in convection/reaction-dominated regime}.
\end{cases}
\end{equation}
The residual estimator is defined by
\begin{equation}
\label{eq:etaConfusion}
    \eta = \left( \sum_{K\in\meshth} \eta_K^2  \right)^{1/2} \quad\text{with}\quad
    \eta_{K}^2 = \gammaK^2 h_K^2\alphaK^{-1}\twonorm{r_K}_K^2 
        + \frac{1}{2}\sum_{e\in\edgek}\gammae h_e\alpha_e^{-1}\twonorm{j_e}_e^2.
\end{equation}
Note that for diffusion-dominated problems,
the $\eta$ is a simple extension of the one in \cite{bernardi2000} or \cite{petzoldt2002} for pure diffusion problems;
for convection/reaction-dominated problems,
the $\eta$ is introduced by Verf{\"u}rth in \cite{verf2005confusion}.
In fact,
$\gamma_{_S}h_S\epsilon_{_S}^{-1/2}$ is same to the weight defined in \cite[Eq.(3.4)]{verf2005confusion}.
Here, $\gamma_{_S}$ is additional weight needed for convection/reaction-dominated problems.
%Here we use $\gamma_{_S}$ only for notational purpose, which does not change the definition of $\eta_K$ in \cite{verf2005confusion}.}
\begin{remark}
    Note that in the case of vanishing reaction $($i.e., $\beta=0$$)$, $\gamma_{_S}=1$
    and consequently the weights in $\eta_K$ \eqref{eq:etaConfusion} are identical to the ones for diffusion problems \cite{bernardi2000}.
\end{remark}

In the remainder of this section, we will discuss reliability and efficiency bounds of the estimator $\eta$.

\subsection{Convection/reaction-dominated regime} % (fold)
\label{sub:residual in Convection-dominated}
The global reliability and efficiency bounds of the $\eta$ were established by Verf{\"u}rth in \cite[Theorem 4.1]{verf2005confusion}.
The reliability and efficiency constants are uniform with respect to $\epsilon$ and $\beta$.
For reader's convenience, they are cited below.

Define the data oscillation on $K$ by
\[
    \Theta_K^2 = \gammaK^2 h_K^2\alphaK^{-1}\twonorm{f-\fbar}_K^2 + 
        \sum_{e\in\edgek\cap\neuedge} \gammae h_e\alpha_e^{-1}\twonorm{\gN-\gbar}_e^2,
    \quad \forall\, K\in\meshth.
\]
\begin{theorem}
\label{thm:etaConvDiff}
    Let $u$ be the solution of \eqref{eq:weakform} and $\uapprox$ be the solution of \eqref{eq:feformDiff} or \eqref{eq:feformConv}.
%Define the data oscillation on $K$ by
%\[
%    \Theta_K^2 = \gammaK^2 h_K^2\alphaK^{-1}\twonorm{f-\fbar}_K^2 + 
%        \sum_{e\in\edgek\cap\neuedge} \gammae h_e\alpha_e^{-1}\twonorm{\gN-\gbar}_e^2.
%\]
    Let $\eta_{K}$ be defined in \eqref{eq:etaConfusion}.
    Then
%    the following estimates hold:
\[
    \enorm{u-\uapprox}^2 + \enorm{\vecta\cdot\grad(u-\uapprox)}_*^2 \leq C_1 
     \sum_{K\in\meshth} \left( \eta_K^2 + \Theta_K^2  \right) 
\]
and 
\[
    \sum_{K\in\meshth} \eta_K^2  \leq C_2 \left( \enorm{u-\uapprox}^2 + 
    \enorm{\vecta\cdot\grad(u-\uapprox)}_*^2 +  \sum_{K\in\meshth} \Theta_K^2 \right),
\]
where the constants $C_1$ and $C_2$ are independent of $\epsilon,\beta$, and any mesh-size.
\end{theorem}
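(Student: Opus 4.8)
These are Verf{\"u}rth's bounds \cite[Theorem 4.1]{verf2005confusion}; I only outline the argument in the present notation, highlighting where the weight $\gamma_{_S}$ of \eqref{eq:gamma} enters so that the constants stay bounded as $\epsilon,\beta\to0$. By \eqref{eq:bgraduu} the form $B$ is coercive in the energy norm, $\enorm{v}^2\le B(v,v)$ for $v\in\honeD$, and integration by parts over $\meshth$ turns $B(u-\uapprox,w)$, for $w\in\honeD$, into the sum of the element residuals $r_K$ paired against $w$ over the elements, the flux jumps $j_e$ paired against $w$ over the edges, and data-oscillation terms.

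\emph{Reliability.} Let $I_h:\honeD\to\Vapprox$ be a Cl{\'e}ment-type quasi-interpolant vanishing on $\diribd$. From $\enorm{u-\uapprox}^2\le B(u-\uapprox,u-\uapprox)$, subtracting $I_h(u-\uapprox)$ and using Galerkin orthogonality for \eqref{eq:feformDiff} (respectively the identity \eqref{eq:feformConv} for SUPG, whose stabilization term is absorbed using $\delta_K\twonorm{\vecta}_{L^{\infty}(K)}\lesssim h_K$, as in \cite{verf2005confusion,verf2015confusion}), the error reduces to the residual pairing against $(u-\uapprox)-I_h(u-\uapprox)$. The decisive local estimates are
\[
   \twonorm{v-I_hv}_K\lesssim\min\{h_K\alphaK^{-1/2},\beta^{-1/2}\}\,\enormx{v}{\omega_K}
   =\gammaK\,h_K\,\alphaK^{-1/2}\,\enormx{v}{\omega_K},
\]
coming from the $L^2$- and $H^1$-stability and approximation properties of $I_h$, and, via the multiplicative trace inequality, $h_e^{1/2}\twonorm{v-I_hv}_e\lesssim\gammae^{1/2}h_e^{1/2}\alpha_e^{-1/2}\enormx{v}{\omega_e}$; Cauchy--Schwarz over $\meshth$ then bounds the residual pairing by $C\bigl(\sum_K(\eta_K^2+\Theta_K^2)\bigr)^{1/2}\enorm{u-\uapprox}$, which gives the energy-norm part. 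For the dual-norm part, for any $v\in\honeD$ write $\langle\vecta\cdot\grad(u-\uapprox),v\rangle=B(u-\uapprox,v)-(\alpha\grad(u-\uapprox),\grad v)-(b(u-\uapprox),v)$; the first term is bounded as above (after replacing $v$ by $v-I_hv$) by $C\bigl(\sum_K(\eta_K^2+\Theta_K^2)\bigr)^{1/2}\enorm{v}$, while $(\alpha\grad(u-\uapprox),\grad v)\le\enorm{u-\uapprox}\enorm{v}$ and, by (A2), $(b(u-\uapprox),v)\le c_b\beta\twonorm{u-\uapprox}\twonorm{v}\le c_b\enorm{u-\uapprox}\enorm{v}$; dividing by $\enorm{v}$ and inserting the bound already obtained for $\enorm{u-\uapprox}$ yields the first inequality.

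\emph{Efficiency.} This is the usual bubble-function argument with the parameter weights tracked. Let $\psi_K$, $\psi_e$ be the interior and edge bubbles and put $\theta_K:=\gammaK^2h_K^2\alphaK^{-1}$, the coefficient of $\twonorm{r_K}_K^2$ in \eqref{eq:etaConfusion}. From $\twonorm{r_K}_K^2\lesssim(r_K,\psi_K r_K)_K$ and the expansion $B(u-\uapprox,\psi_K r_K)=(\alpha\grad(u-\uapprox),\grad(\psi_K r_K))_K+(b(u-\uapprox),\psi_K r_K)_K+(\vecta\cdot\grad(u-\uapprox),\psi_K r_K)_K$, the first two terms are handled by inverse inequalities. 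The convective term is the only one invisible to the energy norm: summing the local identities with the weights $\theta_K$ and testing against $w:=\sum_K\theta_K\psi_K r_K\in\honeD$ (disjoint supports, vanishing traces) gives $\sum_K\theta_K(\vecta\cdot\grad(u-\uapprox),\psi_K r_K)_K=\langle\vecta\cdot\grad(u-\uapprox),w\rangle\le\enorm{\vecta\cdot\grad(u-\uapprox)}_*\enorm{w}$, and since, by inverse inequalities, $\enorm{\psi_K r_K}_K^2\lesssim\max\{\alphaK h_K^{-2},\beta\}\twonorm{r_K}_K^2=\alphaK h_K^{-2}\gammaK^{-2}\twonorm{r_K}_K^2$, one finds $\enorm{w}^2=\sum_K\theta_K^2\enorm{\psi_K r_K}_K^2\lesssim\sum_K\theta_K\twonorm{r_K}_K^2$; that is, the test function has energy norm controlled by the square root of the estimator itself, which is exactly the cancellation that pins down the exponent of $\gamma_{_S}$ in \eqref{eq:etaConfusion}. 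Absorbing, $\sum_K\theta_K\twonorm{r_K}_K^2\lesssim\enorm{u-\uapprox}^2+\enorm{\vecta\cdot\grad(u-\uapprox)}_*^2+\sum_K\Theta_K^2$, and the analogous edge-bubble argument (which also reproduces element-residual terms, already controlled) bounds the jump part; the bounded overlap of the patches gives the second inequality, with $C_1,C_2$ depending only on $C_0$, the degree $k$, and $c_b$.

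The main obstacle is the parameter bookkeeping: one has to check that the minimum in the interpolation estimate, the factor $\max\{\alphaK h_K^{-2},\beta\}$ in the bubble-norm bounds, and the weight $\gamma_{_S}$ combine so that the same $\eta_K$ is \emph{simultaneously} uniformly reliable and efficient, together with the treatment of the convective derivative through the dual norm and of the SUPG stabilization term; everything else is the standard residual machinery.
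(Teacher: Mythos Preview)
Your proposal is correct and matches the paper's treatment: the paper does not prove this theorem at all but simply cites it as \cite[Theorem~4.1]{verf2005confusion}, and your sketch faithfully outlines Verf{\"u}rth's argument with the weighted Cl\'ement interpolation for reliability and the weighted bubble-function technique (with the dual-norm handling of the convective term) for efficiency. One small point worth flagging in your edge-bubble step is that the face bubbles must be the \emph{modified} ones of \cite{verf2005confusion} (depending on $\epsilon,\beta$), as the paper itself notes later in \eqref{eq:bubblebounds}; the standard geometric bubbles would not give the right $\gammae$-scaling.
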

Here and thereafter, we will use $C$ with or without subscripts to denote a generic nonnegative constant, 
possibly different at different occurrences, that is independent of any mesh-size and the problem parameters: either $\epsilon$ and $\beta$ for dominant convection/reaction, or $\almax/\almin$ for the dominant diffusion,
but may depend on the shape parameter of mesh $\meshth$ and on the polynomial degree $k$.

The reaction-dominated diffusion problem, 
i.e., with $\vecta=0$, $b=1$, and $\diribd=\bdry$,
corresponds to the singularly perturbed reaction-diffusion equation:
\begin{equation}
\label{eq:modelReaction}
    \left\{ \begin{alignedat}{2}
    -\epsilon \Delta  u + u &= f \quad && \text{in} \;\; \Omega,\\
    u &= 0 \quad && \text{on} \;\; \bdry.
    \end{alignedat} \right.
\end{equation}
The assumptions in Section \ref{sec:modelConfusion} are fulfilled with $\beta=c_b=1$.
For the finite element approximation in \eqref{eq:feformDiff} to the problem in \eqref{eq:modelReaction},
the residual estimator $\eta$ in \eqref{eq:etaConfusion}
is proved by Verf{\"u}rth in \cite{verf1998reaction}
to be globally reliable (see Theorem \ref{thm:etaConvDiff});
moreover, it is not only globally but also locally efficient.
\begin{theorem}
\label{thm:etaReaction}
    Let $u$ and $\uapprox$ be the respective exact and finite element solutions
    of the reaction-diffusion equation in \eqref{eq:modelReaction}.
    The residual error indicator $\eta_K$ in \eqref{eq:etaConfusion}
    satisfies 
\[
    \eta_K^2 \leq C \left( \enormx{u-\uapprox}{\omega_K}^2 +
             \sum_{K'\subset \omega_K} \Theta_{K'}^2 \right).
\]
\end{theorem}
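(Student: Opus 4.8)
The plan is to establish the local efficiency bound for the reaction-diffusion problem \eqref{eq:modelReaction} by the classical bubble function technique of Verf\"urth, carefully tracking the $\epsilon$-dependence through the weights $\gammaK$ and $\gammae$. I would split $\eta_K^2$ into the element-residual contribution $\gammaK^2 h_K^2 \epsilon^{-1}\twonorm{r_K}_K^2$ and the edge-jump contributions $\gammae h_e \epsilon^{-1}\twonorm{j_e}_e^2$, and bound each separately.

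For the element residual, I would use the interior bubble function $b_K$ supported on $K$ (a product of barycentric coordinates). Set $w_K = \gammaK^2 h_K^2 \epsilon^{-1} b_K r_K$ (with $r_K$ here the polynomial $\fbar - \uapprox - \ddiv\fluxh$). Using the norm equivalence $\twonorm{r_K}_K^2 \lesssim (r_K, b_K r_K)_K$ on the finite-dimensional space of polynomials, together with the residual equation $(r_K, w_K)_K = B_K(u-\uapprox, w_K) + (f-\fbar, w_K)_K$ obtained from \eqref{eq:weakform} and integration by parts on $K$, I would estimate the right-hand side. The key inverse estimates are $\twonorm{\grad w_K}_K \lesssim h_K^{-1}\twonorm{w_K}_K$ and $\twonorm{w_K}_K \lesssim \gammaK^2 h_K^2\epsilon^{-1}\twonorm{r_K}_K$. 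The diffusion term $\epsilon(\grad(u-\uapprox),\grad w_K)_K$ is controlled by $\epsilon^{1/2}\twonorm{\grad(u-\uapprox)}_K \cdot \epsilon^{1/2}h_K^{-1}\twonorm{w_K}_K$, and the crucial point is that $\gammaK^2 h_K^2 \epsilon^{-1} \cdot \epsilon^{1/2}h_K^{-1} = \gammaK^2 h_K \epsilon^{-1/2} \leq \gammaK$ precisely when $\gammaK \leq h_K^{-1}\epsilon^{1/2}\beta^{-1/2}$ with $\beta=1$, which holds by definition of $\gammaK$ in the convection/reaction-dominated regime; this yields the factor $\gammaK h_K^{1/2}$ matching $\enormx{u-\uapprox}{K}$. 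The reaction term $(u-\uapprox,w_K)_K$ pairs against $\twonorm{u-\uapprox}_K \lesssim \enormx{u-\uapprox}{K}$ (since $\beta=1$) with the analogous weight bookkeeping $\gammaK^2 h_K^2 \epsilon^{-1} \leq \gammaK h_K \epsilon^{-1/2} \leq 1$, and the data term contributes $\Theta_K$.

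For the edge jumps, I would use the edge bubble $b_e$ on $\omega_e$ and an extension of $j_e$ into $\omega_e$, setting $w_e = \gammae h_e \epsilon^{-1} b_e (\text{ext } j_e)$. Using $\twonorm{j_e}_e^2 \lesssim (j_e, b_e j_e)_e$, integrating by parts on each of the two elements in $\omega_e$, one gets $(j_e, w_e)_e = \sum_{K\subset\omega_e} \big[ B_K(u-\uapprox,w_e) + (f-\fbar,w_e)_K - (r_K, w_e)_K \big]$, and the already-established element-residual bound handles the last term. The scaling estimates $\twonorm{w_e}_{\omega_e} \lesssim h_e^{1/2}\gammae h_e \epsilon^{-1}\twonorm{j_e}_e$ and $\twonorm{\grad w_e}_{\omega_e}\lesssim h_e^{-1}\twonorm{w_e}_{\omega_e}$, combined with the same weight arithmetic using $\gammae \leq h_e^{-1}\alpha_e^{1/2}\beta^{-1/2}$, reproduce the factor $\gammae^{1/2} h_e^{1/4}$ against $\enormx{u-\uapprox}{\omega_e}$ and the oscillation $\Theta_{K'}$ over $K'\subset\omega_K$.

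The main obstacle I anticipate is the weight bookkeeping: one must verify at each occurrence that the combination of $\gammaK$ (or $\gammae$), the mesh-size powers $h_K^s$, and the powers of $\epsilon$ arising from the inverse estimates and from the $\epsilon^{1/2}$-scaled energy norm collapses to exactly $\gammaK h_K^{1/2}$ (or $\gammae^{1/2} h_e^{1/4}$), using nothing more than $\gamma_{_S} \leq 1$ and $\gamma_{_S} \leq h_S^{-1}\alpha_{_S}^{1/2}$ (with $\beta=1$). The case distinction hidden inside $\gamma_{_S} = \min\{1, h_S^{-1}\alpha_{_S}^{1/2}\}$ is what makes the single inequality work uniformly across the diffusion-resolved and diffusion-unresolved mesh regions; in the regime where $\gamma_{_S}=1$ the estimate reduces to the standard reaction-diffusion local efficiency of Verf\"urth \cite{verf1998reaction}, while when $\gamma_{_S}=h_S^{-1}\alpha_{_S}^{1/2}$ the extra smallness is exactly what absorbs the otherwise-unbounded $\epsilon^{-1/2}$ factors. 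Everything else — choice of bubble functions, norm equivalences on polynomial spaces, standard scaling and inverse inequalities — is routine and shape-regularity dependent only.
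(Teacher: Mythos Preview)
The paper does not supply its own proof of this theorem; it is stated as a direct citation of Verf\"urth \cite{verf1998reaction}. Your proposal is in spirit the Verf\"urth argument, and the element-residual part is correct: with the standard interior bubble $b_K$ the inverse estimates $\twonorm{b_K r_K}_K \lesssim \twonorm{r_K}_K$ and $\twonorm{\grad(b_K r_K)}_K \lesssim h_K^{-1}\twonorm{r_K}_K$ combine with $\gammaK \leq 1$ and $\gammaK \leq h_K^{-1}\epsilon^{1/2}$ (recall $\beta=1$ here) to give the robust bound $\gammaK h_K\epsilon^{-1/2}\twonorm{r_K}_K \lesssim \enormx{u-\uapprox}{K} + \Theta_K$.

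The edge-jump part, however, has a genuine gap. With the \emph{standard} face bubble you only get $\twonorm{\psi_e j_e}_K \lesssim h_e^{1/2}\twonorm{j_e}_e$, and then the reaction term $(u-\uapprox,\psi_e j_e)_K$ in the residual identity produces, after multiplying through by the weight $(\gammae h_e\epsilon^{-1})^{1/2}$, a factor $\gammae^{1/2}h_e\epsilon^{-1/2}$ in front of $\twonorm{u-\uapprox}_{\omega_e}$. In the unresolved regime $h_e\gg\epsilon^{1/2}$ one has $\gammae=h_e^{-1}\epsilon^{1/2}$, so this factor equals $h_e^{1/2}\epsilon^{-1/4}$, which is unbounded. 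The same obstruction appears in the cross term $(r_K,\psi_e j_e)_K$: the coefficient $\gammae^{1/2}h_e\epsilon^{-1/2}$ in front of $\twonorm{r_K}_K$ is not dominated by $\gammaK h_K\epsilon^{-1/2}$ uniformly. The remedy, and this is precisely the new idea in \cite{verf1998reaction}, is to replace $\psi_e$ by a \emph{modified} face bubble supported in a strip of width $\min\{h_e,\epsilon^{1/2}\beta^{-1/2}\}$ about $e$, which yields the sharper scalings $\twonorm{\psi_e j_e}_K \lesssim \gammae^{1/2}h_e^{1/2}\twonorm{j_e}_e$ and $\twonorm{\grad(\psi_e j_e)}_K \lesssim \gammae^{-1/2}h_e^{-1/2}\twonorm{j_e}_e$; these are exactly the estimates recorded in \eqref{eq:bubblebounds} of the present paper (taken from \cite[Lemma~3.6]{verf2005confusion}). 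With that extra $\gammae^{1/2}$ the weight arithmetic closes in both regimes. Your final paragraph correctly anticipates that the bookkeeping is the crux, but underestimates the difficulty: no amount of algebra with the two inequalities $\gamma_{_S}\leq 1$ and $\gamma_{_S}\leq h_S^{-1}\epsilon^{1/2}$ will rescue the standard face bubble here.
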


\subsection{Diffusion-dominated regime} % (fold)
\label{sub:residual in Diffusion-dominated}
For the diffusion-dominated case,
following the pure diffusion case in \cite{bernardi2000} or \cite{petzoldt2002},
this section establishes global reliability and local efficiency of the estimator $\eta$.
These estimates are proved to be robust in terms of the discontinuity of the diffusion coefficient $\alpha$.

To this end, let $u$ be the exact solution in \eqref{eq:weakform} and $\uapprox$ be the finite element solution in \eqref{eq:feformDiff}.
An alternative and often used expression of the element residual $r_K$ in \eqref{eq:rK} is given 
in terms of the true error and the data oscillation:
\begin{equation}
\label{eq:rKdiv}
    r_K = -\ddiv (\alpha\grad(u-\uapprox))+\vecta\cdot\grad(u-\uapprox)+b(u-\uapprox) + \fbar-f.
\end{equation}
\begin{theorem}
\label{thm:etarelia}
    Under the monotonicity assumption \textnormal{\cite[Hypothesis 2.7]{bernardi2000}} of $\alpha$, the residual estimator $\eta$ satisfies the following reliability estimate: there exists a positive constant $C$ such that 
    \[
        \enorm{u-\uapprox}^2 \leq C \left( \eta^2 + \sum_{K\in\meshth} \Theta_K^2 \right).
    \]
\end{theorem}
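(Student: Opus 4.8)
The plan is to carry out the classical residual argument of Bernardi and Verf\"urth \cite{bernardi2000} (see also Petzoldt \cite{petzoldt2002}) for the pure-diffusion problem, checking that in the diffusion-dominated regime the convection and reaction terms — which get absorbed into the element residual $r_K$ and are accompanied by the trivial weight $\gamma_{_S}=1$ of \eqref{eq:gamma} — do not spoil robustness with respect to the discontinuity of $\alpha$.

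To begin, by the coercivity estimate \eqref{eq:bgraduu} we have $\enorm{u-\uapprox}^2 \le B(u-\uapprox,\,u-\uapprox)$, and by the Galerkin orthogonality of \eqref{eq:feformDiff} this equals $B\big(u-\uapprox,\,w\big)$ with $w:=u-\uapprox-\vapprox$ for any $\vapprox\in\Vapprox$. Writing $B(u-\uapprox,w)=B(u,w)-B(\uapprox,w)$, using the weak form \eqref{eq:weakform} on the first term, and integrating by parts elementwise (licit since $\vecta,b,\alphaK$ are piecewise constant and $\uapprox|_K$ is polynomial, so $\ddiv(\alphaK\grad\uapprox)\in\ltwox{K}$), I would obtain the residual representation
\[
B(u-\uapprox,w)=\sum_{K\in\meshth}\big(r_K+f-\fbar,\,w\big)_K+\sum_{e\in\edge}(j_e,w)_e+\sum_{e\in\neuedge}(\gbar-\gN,w)_e ,
\]
with $r_K$ and $j_e$ exactly as in \eqref{eq:rK}--\eqref{eq:jeconfusion}: the Dirichlet faces contribute nothing because $w$ vanishes there, the two elementwise traces of $\fluxh\cdot\normalvect$ across an interior face combine into $j_e$, and on a Neumann face $\fluxh\cdot\normalvect-\gN$ splits as $j_e+(\gbar-\gN)$.

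Next I would take $\vapprox=I_{\meshth}(u-\uapprox)$, the $\alpha$-weighted quasi-monotone Cl\'ement-type quasi-interpolant onto $\Vapprox$ constructed in \cite{bernardi2000} under the monotonicity hypothesis, and estimate each term of the representation by the Cauchy--Schwarz inequality. The decisive ingredients are the interpolation bounds
\[
\alphaK^{1/2}h_K^{-1}\twonorm{u-\uapprox-\vapprox}_K \;+\; \alpha_e^{1/2}h_e^{-1/2}\twonorm{u-\uapprox-\vapprox}_e \;\le\; C\,\twonorm{\alpha^{1/2}\grad(u-\uapprox)}_{\widetilde\omega_K},
\]
which, thanks to \cite[Hypothesis~2.7]{bernardi2000}, hold with $C$ \emph{independent of the jumps} of $\alpha$ (here $\widetilde\omega_K$ is a fixed finite patch of elements around $K$). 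Summing over $K$ and $e$, invoking the finite overlap of the patches, the definition \eqref{eq:etaConfusion} of $\eta$ with $\gammaK=\gammae=1$, and the definition of $\Theta_K$, this gives
\[
\enorm{u-\uapprox}^2 \le B(u-\uapprox,w) \le C\Big(\eta+\big(\textstyle\sum_{K\in\meshth}\Theta_K^2\big)^{1/2}\Big)\,\twonorm{\alpha^{1/2}\grad(u-\uapprox)} \le C\Big(\eta+\big(\textstyle\sum_{K\in\meshth}\Theta_K^2\big)^{1/2}\Big)\,\enorm{u-\uapprox}.
\]
Dividing by $\enorm{u-\uapprox}$ and squaring yields the asserted bound.

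The crux — and the only place the monotonicity assumption is genuinely used — is the jump-robust interpolation estimate above: one must employ the $\alpha$-weighted, quasi-monotone Cl\'ement operator rather than the standard one so that the interpolation constant does not degenerate like $\almax/\almin$. Granting that, the only deviation from the pure-diffusion analysis is that $r_K$ now additionally carries $\vecta\cdot\grad\uapprox+b\uapprox$; but since $r_K$ enters only through $\twonorm{r_K}_K$ with weight $\gamma_{_S}=1$, nothing more is needed, and no $\beta\twonorm{u-\uapprox}^2$ term ever appears on the right-hand side, the Cl\'ement bounds involving solely $\twonorm{\alpha^{1/2}\grad(u-\uapprox)}$.
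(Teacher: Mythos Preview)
Your proof is correct and follows essentially the same route as the paper's: both obtain the key inequality $\enorm{u-\uapprox}^2\le B(u-\uapprox,\,(u-\uapprox)-\vapprox)$ via \eqref{eq:bgraduu} and Galerkin orthogonality, then expand the right-hand side into the residual representation \eqref{eq:errorfunctional} and invoke the $\alpha$-weighted Cl\'ement interpolant of \cite{bernardi2000}. The paper simply presents the algebra leading to \eqref{eq:errorfunctional} in a slightly different order and defers the interpolation step entirely to the citation, whereas you spell it out.
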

\begin{proof}
    Let $w:=u-\uapprox$.
    To prove the reliability bound,
    according to \cite{bernardi2000},
    it suffices to derive an estimate of the error in the form below:
    \begin{equation}
    \label{eq:errorfunctional}
    \begin{aligned}
        \enorm{w}^2 \leq & \sum_{K\in\meshth} (r_K,w-w_{\meshth})_K + \sum_{e\in\edge} (j_e, w-w_{\meshth})_e \\
        & + \sum_{K\in\meshth} (\fbar-f,w-w_{\meshth})_K 
         + \sum_{e\in\neuedge} (\gbar-\gN,w-w_{\meshth})_e,
        \quad \forall\, w_{\meshth}\in \Vapprox.
    \end{aligned}
    \end{equation}
    %Once \eqref{eq:errorfunctional} is proved, as in \cite{bernardi2000},
    %the robust reliability of $\eta$ follows from
    %Poincar{\'e}-type inequalities (cf.\cite[Lemma 2.8]{bernardi2000})
    %under the monotonicity assumption \cite[Hypothesis 2.7]{bernardi2000} of $\alpha$.

    To do so, it follows from integration by parts, \eqref{eq:rKdiv}, and 
    the error equation $B(w,w_{\meshth})=0$ for all $w_{\meshth}\in\Vapprox$ that
    \[
    \begin{aligned}
        (\alpha\grad w,\grad w) =& (\alpha\grad w,\grad(w-w_{\meshth})) + (\alpha\grad w,\grad w_{\meshth})\\
%            &= \sum_{K\in\meshth} (r_K,w-w_{\meshth})_K + \sum_{e\in\edge} (j_e,w-w_{\meshth})_e
%         - (\vecta\cdot\grad w,w-w_{\meshth}) + (\alpha\grad w, \grad w_{\meshth}) \\
            =& \sum_{K\in\meshth} (r_K,w-w_{\meshth})_K + \sum_{e\in\edge} (j_e,w-w_{\meshth})_e
                - (\vecta\cdot\grad w, w) - (bw,w) \\
        &+ \sum_{K\in\meshth} (\fbar-f,w-w_{\meshth})_K 
         + \sum_{e\in\neuedge} (\gbar-\gN,w-w_{\meshth})_e,
    \end{aligned}
    \]
which, together with \eqref{eq:bgraduu},
yields \eqref{eq:errorfunctional}.
This completes the proof of the theorem.
\end{proof}

\begin{theorem}
\label{thm:etaeff}
    There exists a constant $C$ such that 
    \[
        \eta_K \leq C \left( \enorm{\uapprox-u}_{\omega_K} 
        + \sum_{K'\subseteq \omega_K} \Theta_{K'} \right), 
        \quad \forall\, K \in \meshth.
    \]
%where $C$ is independent of $\almax/\almin$ and any mesh-size.
%and $\omega_K$ denotes the union of elements that share one edge/face with $K$.
\end{theorem}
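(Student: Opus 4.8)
\medskip
\noindent\textbf{Proof strategy.} The plan is to use the standard bubble-function technique for local lower bounds (Verf{\"u}rth), in the form adapted to discontinuous diffusion by Bernardi--Verf{\"u}rth \cite{bernardi2000} and Petzoldt \cite{petzoldt2002}. Since $\gammaK=\gammae=1$ in the diffusion-dominated regime, it suffices to prove the two local estimates
\[
    h_K\alphaK^{-1/2}\twonorm{r_K}_K\le C\big(\enorm{u-\uapprox}_K+\Theta_K\big),
    \qquad
    h_e^{1/2}\alpha_e^{-1/2}\twonorm{j_e}_e\le C\Big(\enorm{u-\uapprox}_{\omega_e}+\textstyle\sum_{K'\subseteq\omega_e}\Theta_{K'}\Big)
\]
for $K\in\meshth$ and $e\in\edgek$, and then to square, sum over $e\in\edgek$, use $\omega_e\subseteq\omega_K$, and take square roots to recover $\eta_K$. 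Write $w:=u-\uapprox$; because $\alpha|_K$, $\vecta|_K$, $b|_K$ are piecewise constant and $\fbar|_K=\PiKkneg f$, each $r_K$ is a polynomial on $K$, so the usual norm equivalences on finite-dimensional polynomial spaces apply with constants depending only on $k$ and the shape parameter of $\meshth$.

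For the element term, let $\psi_K$ be the interior bubble on $K$ and take $v_K:=\psi_K r_K$, extended by zero; then $\twonorm{r_K}_K^2\le C\,(r_K,v_K)_K$, and substituting the error representation \eqref{eq:rKdiv} and integrating by parts on $K$ (boundary terms vanish since $v_K=0$ on $\partial K$) gives
\[
    (r_K,v_K)_K=(\alpha\grad w,\grad v_K)_K+(\vecta\cdot\grad w,v_K)_K+(bw,v_K)_K+(\fbar-f,v_K)_K.
\]
One then applies Cauchy--Schwarz, the inverse bounds $\twonorm{\grad v_K}_K\le Ch_K^{-1}\twonorm{r_K}_K$ and $\twonorm{v_K}_K\le C\twonorm{r_K}_K$, the diffusion-dominated bounds $\twonorm{\vecta}_{L^\infty(K)}\le C_b\alphaK$ and $\twonorm{b}_{L^\infty(K)}\le c_b\beta$, the lower bound $\alpha\ge\delta>0$, and $h_K\le\mathrm{diam}(\Omega)=O(1)$; dividing by $\twonorm{r_K}_K$ and multiplying by $h_K\alphaK^{-1/2}$ (the term $(\fbar-f,v_K)_K$ accounting for $\Theta_K$) yields the first estimate. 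The convective and reactive contributions get absorbed into $\enorm{w}_K$ precisely because $|\vecta/\alpha|\le C_b$ and because $h_K\beta^{1/2}\alphaK^{-1/2}$ is $O(1)$, so no mesh size or coefficient ratio survives.

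For the edge term, fix $e\in\edgek$ (trivial if $e\in\diriedge$). Let $\psi_e$ be the edge bubble associated with $e$ and $v_e$ a polynomial extension of $\psi_e j_e$ supported on $\overline{\omega_e}$ and vanishing on $\diribd$, with $\twonorm{v_e}_{\omega_e}\le Ch_e^{1/2}\twonorm{j_e}_e$ and $\twonorm{\grad v_e}_{\omega_e}\le Ch_e^{-1/2}\twonorm{j_e}_e$; then $\twonorm{j_e}_e^2\le C\,(j_e,v_e)_e$. Using the weak form \eqref{eq:weakform} together with an element-wise integration by parts over $\omega_e$ (so that the interface terms on $e$ assemble into $(j_e,v_e)_e$, cf. \eqref{eq:rK}--\eqref{eq:jeconfusion}), one expresses $(j_e,v_e)_e$ in terms of the patch bilinear form $(\alpha\grad w,\grad v_e)_{\omega_e}+(\vecta\cdot\grad w,v_e)_{\omega_e}+(bw,v_e)_{\omega_e}$, the element residuals $(r_{K'},v_e)_{K'}$ for $K'\subseteq\omega_e$, and the oscillation terms $(\fbar-f,v_e)_{K'}$ and (for $e\in\neuedge$) $(\gN-\gbar,v_e)_e$. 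The element-residual terms are controlled by the first estimate, the oscillation terms by $\sum_{K'\subseteq\omega_e}\Theta_{K'}$, and in the patch bilinear form one performs Cauchy--Schwarz \emph{element by element}, splitting each coefficient as $\alpha_{K'}^{1/2}\cdot\alpha_{K'}^{1/2}$: one factor $\alpha_{K'}^{1/2}$ pairs with $\grad w$ (resp. with $w$, via $\twonorm{b}_{L^\infty(K')}\le c_b\beta$) to reconstruct $\enorm{w}_{\omega_e}$, while the remaining factor on $v_e$ is dominated by $\alpha_e^{1/2}=\max_{K\subseteq\omega_e}\alphaK^{1/2}$. Dividing by $\twonorm{j_e}_e$ and multiplying by $h_e^{1/2}\alpha_e^{-1/2}$ then gives the second estimate.

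The main obstacle --- and essentially the only point where the argument departs from the constant-coefficient case --- is the edge term: the bookkeeping has to be arranged so that every occurrence of $\alpha$ on $\omega_e$ is paired either to build the energy norm $\enorm{w}_{\omega_e}$ (through the \emph{local} weights $\alpha_{K'}^{1/2}$) or to land on the test function (where it is dominated by the \emph{maximal} weight $\alpha_e^{1/2}$), so that no ratio $\almax/\almin$ is ever generated; this is exactly why $\alpha_e$ is taken to be the maximum over the patch. Following Petzoldt \cite{petzoldt2002}, this (efficiency) direction needs no (quasi-)monotonicity hypothesis on $\alpha$, unlike the reliability bound in Theorem~\ref{thm:etarelia}. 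Squaring and summing the two local estimates over $\edgek$ (and over $K'\subseteq\omega_K$) then gives $\eta_K^2\le C\sum_{K'\subseteq\omega_K}\big(\enorm{w}_{K'}^2+\Theta_{K'}^2\big)$, and taking square roots yields the claim.
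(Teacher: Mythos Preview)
Your strategy is exactly the paper's: invoke the Bernardi--Verf\"urth bubble argument and check that the two lower-order terms $\vecta\cdot\grad w$ and $bw$ can be absorbed into $\enorm{w}_K$ with constants independent of $\almax/\almin$. Your treatment of the convective term via $|\vecta|\le C_b\alphaK$ is precisely what the paper does.

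There is one small gap in the reactive term. You invoke only $\twonorm{b}_{L^\infty(K)}\le c_b\beta$ and then assert that $h_K\beta^{1/2}\alphaK^{-1/2}=O(1)$; but nothing in the hypotheses bounds $\beta/\alphaK$ uniformly in the coefficient jumps (using $\alpha\ge\delta$ here would smuggle in a dependence on $\almin$). The paper's fix is to combine \emph{both} bounds on $b$, namely $|b|\le C_b\alphaK$ (diffusion-dominated assumption) and $|b|\le c_b\beta$ (A2), via their geometric mean to get $|b|\le (C_bc_b)^{1/2}\alphaK^{1/2}\beta^{1/2}$, whence
\[
    h_K\alphaK^{-1/2}\twonorm{bw}_K \le h_K(C_bc_b)^{1/2}\beta^{1/2}\twonorm{w}_K \le C\enorm{w}_K,
\]
with the $\alphaK^{-1/2}$ cancelled cleanly. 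With this correction your argument is complete and coincides with the paper's.
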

\begin{proof}
%    Let $w=u-\uapprox$.
%    Let $\psi_K$ denote the element bubble supported on $K\in\meshth$
%    and define $v_K = \psi_K r_K$.
%    The property of the bubble function, \eqref{eq:rKdiv}, integration by parts,
%    and the Cauchy-Schwarz inequality imply
%    \[
%    \begin{aligned}
%        \twonorm{r_K}_K^2 
%        & \leq C\twonorm{r_K}_K\left( h_K^{-1}\twonorm{\alpha\grad w}_K + \twonorm{\vecta\cdot\grad w}_K + \twonorm{bu}_K + \twonorm{\fbar-f}_K \right).
%    \end{aligned}
%    \]
The proof is essentially same to the one in \cite{bernardi2000} for diffusion problems, with the only additional observation that
    \begin{equation*}
%    \label{eq:agradestimate} 
        h_K\alphaK^{-1/2}\twonorm{\vecta\cdot\grad w}_K
        \leq h_K C_b \twonorm{\alpha^{1/2}\grad w}_K
        \leq C \enorm{w}_K
    \end{equation*}
and
    \begin{equation*}
%    \label{eq:bwestimate} 
        h_K\alphaK^{-1/2}\twonorm{bw}_K
        \leq h_K C_b^{1/2}c_b^{1/2}\beta^{1/2}\twonorm{w}_K
        \leq C \enorm{w}_K
    \end{equation*}
according to the assumptions in Section \ref{sec:modelConfusion}.
%    Combining the above three inequalities yield
%    Hence it follows that
%    \begin{equation}
%    \label{eq:RKeff}
%        h_K \alphaK^{-1/2}\twonorm{r_K}_K \leq C \left(  \enorm{w}_K+\Theta_K \right) .
%    \end{equation}
%    Let $\psi_e$ denote the face bubble associated with $e\in\edge$
%    and define $v_e = \psi_e j_e$.
%    It follows from the property of the bubble function, integration by parts, 
%    \eqref{eq:rKdiv}, \eqref{eq:agradestimate}, \eqref{eq:bwestimate}, and \eqref{eq:RKeff} that
%\begin{equation*}
%    \begin{aligned}
%        \twonorm{j_e}_e^2 & \leq C\int_e j_e v_e ds \\
%        & = C \sum_{K\subseteq \omega_e} \int_K \alpha\grad w\cdot\grad v_e dx
%    + \int_K v_e \ddiv(\alpha\grad w) dx + \int_{e\cap\neubd} (\gbar-\gN)v_e ds \\
%    & \leq C\twonorm{j_e}_e\left( \alpha_e^{1/2}h_e^{-1/2}\enorm{w}_{\omega_e} 
%        + \sum_{K\subseteq\omega_e} h_e^{1/2}\twonorm{\fbar-f}_K +  \twonorm{\gbar-\gN}_{e\cap\neubd} \right),
%    \end{aligned}
%\end{equation*}
%which implies
%\begin{equation*}
%  h_e^{1/2} \alpha_e^{-1/2}\twonorm{j_e}_e \leq 
%    C \left( \enorm{w}_{\omega_e} + \sum_{K\subseteq\omega_e} \Theta_K \right).
%\end{equation*}
%This, together with \eqref{eq:RKeff},
%shows the local efficiency bound.
\end{proof}
%%%%%%%%%%%% section  (end) %%%%%%%%%%%%%%%%

\section{Flux Recovery} % (fold)
\label{sec:fluxConfusion}
We show in this section how to recover a suitable flux in $\hdiv$, denoted by $\fluxmesh$, 
such that the resulting hybrid estimator is robust.
%$\fluxmesh$ is chosen in $\hdiv$-conforming Raviart-Thomas spaces.
Same to \cite{localL2}, the recovered normal component $\fluxmesh|_K\cdot\normalvect=\ghatK\in\ltwox{\partial K}$ on $e\in\edgek$ is defined as a weighted average of normal components:
\begin{equation}
\label{eq:ghatK}
    \ghatK|_e := 
\begin{cases}
    \lambda_{K,e}\fluxh|_K\cdot\normalvect+(1-\lambda_{K,e})\fluxh|_{\kenb}\cdot\normalvect, &\text{if } e\in \edgek\cap\intiedge,\\[2mm]
        \gbar, &\text{if } e\in\edgek\cap\neuedge,\\[2mm]
        \fluxh\cdot\normalvect, &\text{if } e\in\edgek\cap\diriedge,
    \end{cases}
\end{equation}
where $\kenb$ is the element sharing $e\in\edgek$ and the average weight is given by
\[
    \lambda_{K,e} = \frac{\alphaK^{-1}h_K}{\alphaK^{-1}h_K+\alphaKenb^{-1}h_{\kenb}}.
\]

For the diffusion-dominated case, $\fluxmesh$ is defined in \eqref{eq:fluxDiffConv}.
%which is analogous to \cite{localL2} for pure diffusion problems.
For convection/reaction-dominated case, the form of $\fluxmesh$ in an element $K$ depends on the size of the element $K$. 
Its construction remains the same as in \eqref{eq:fluxDiffConv} if $R_K$ is relatively small.
Otherwise, it is essentially the numerical flux with transitional regions to be in $\hdivk{K}$.

\subsection{Diffusion-dominated regime} % (fold)
In the diffusion-dominated regime, the flux recovery is a natural extension of the one in \cite{localL2} for pure diffusion problems.
We define $\fluxmesh|_K\in \RT_{k-1}(K)$ by assigning the degrees of freedom:
\begin{equation}
\label{eq:fluxDiffConv}
    \left\{ \begin{alignedat}{1}
    \fluxmesh\cdot\normalvect&= \ghatK \quad \text{on } \partial K,\\[2mm]
    \ddiv\fluxmesh &= \PiKkneg(\fbar-\vecta\cdot\grad\uapprox-b\uapprox) + J_K  \quad \text{in } K,\\[2mm]
    \int_K \fluxmesh\cdot\vect{q} dx &= \int_K \fluxh \cdot\vect{q} dx \quad \forall\, \vect{q}\in \mathcal{Q}_{k-2}(K), \quad k\geq 2,
    \end{alignedat} \right.
\end{equation}
where
\begin{equation}
\label{eq:JKDiffConv}
   J_K := |K|^{-1} \left( \int_{\partial K} \ghatK ds + \int_K \vecta\cdot\grad\uapprox + b\uapprox - f dx \right)
\end{equation}
and 
\begin{equation}
\label{eq:Qk}
   \mathcal{Q}_{k-2}(K) := \{ \vect{q}\in P_{k-2}(K)^d: (\vect{q},\grad p)_K=0,\, \forall\, p\in P_{k-1}(K) \}.
\end{equation}
It is easily seen that $\fluxmesh\in\hdiv$
and $\fluxmesh$ coincides with the one in \cite[Eq.(3.14)]{localL2} when $\vecta$ and $b$ vanish.

\subsection{Flux recovery in convection/reaction-dominated regime} % (fold)
In the case that $\alpha=\epsilon$ is a constant,
the weight is $\lambda_{K,e}=\frac{h_K}{h_K+h_{\kenb}}$.
This may be simplified as $\lambda_{K,e}=\frac{1}{2}$ for all $K\in\meshth$
if the size of each element is close to sizes of its neighboring elements.
With given normal components of the recovered flux on each face,
the construction on each element $K\in\meshth$ depends on the size of $K$.

\subsubsection{Flux recovery in $K$ with $R_K\leq \kpaneg$} % (fold)
\label{sub:flux1}
On element $K\in\meshth$ with $R_K\leq \kpaneg$,
the recovered flux $\fluxmesh|_K\in \RT_{k-1}(K)$ is simply defined as in \eqref{eq:fluxDiffConv}.

\begin{remark}
    The case of absent reaction corresponds to $\beta=0$, or equivalently,
     $\beta^{-1/2} = \infty$.
    Consequently,
    $R_K\leq \epsilon^{1/2}\beta^{-1/2}$ holds true for all $K\in\meshth$.
    Therefore, 
    the recovered flux in each element is given by \eqref{eq:fluxDiffConv}. 
\end{remark}
% subsection flux recovery in $K$ with $R_K\leq \kpa^{-1}$ (end)

\subsubsection{Flux recovery in $K$ with $R_K > \kpaneg$} % (fold)
\label{sub:flux2}
For simplicity, here we restrict our attention to $\Omega \subset \RR^2$.
%and the three dimensional case will be presented in a forthcoming paper.
%the algorithm and analysis can be extended naturally to $\RR^3$.
For the three dimensional case, the partition of the tetrahedron follows \cite{ainsworth2013confusion} and the piecewise definition of the recovered flux shares similar idea as the two dimensional case.

For element $K\in\meshth$ with $R_K > \kpaneg$, 
we will define the recovered flux $\fluxmesh\in\hdivk{K}$ piecewisely.
To this end, as in \cite{ainsworth2011confusion}, $K$ is first partitioned into a triangle $K_{\Delta}$ and three trapezoids illustrated in Figure \ref{fig:partitionK}.
The edges of the triangle $K_{\Delta}$ are parallel to corresponding edges of $K$
and the distance between each pair of parallel edges from $K_{\Delta}$ to $K$ is equal to $\kpaneg$.
Each trapezoid is further partitioned into a rectangle and two triangles.
Figure \ref{fig:trapezoid} illustrates the partition of the trapezoid on $e\in\edgek$
into a rectangle, denoted by $Q_e$, and two triangles, denoted by $T_e$ and $S_e$, respectively.

\begin{figure}[htbp]
\centering
\begin{minipage}[t]{0.5\textwidth}
\centering
\includegraphics[width=.9\textwidth]{./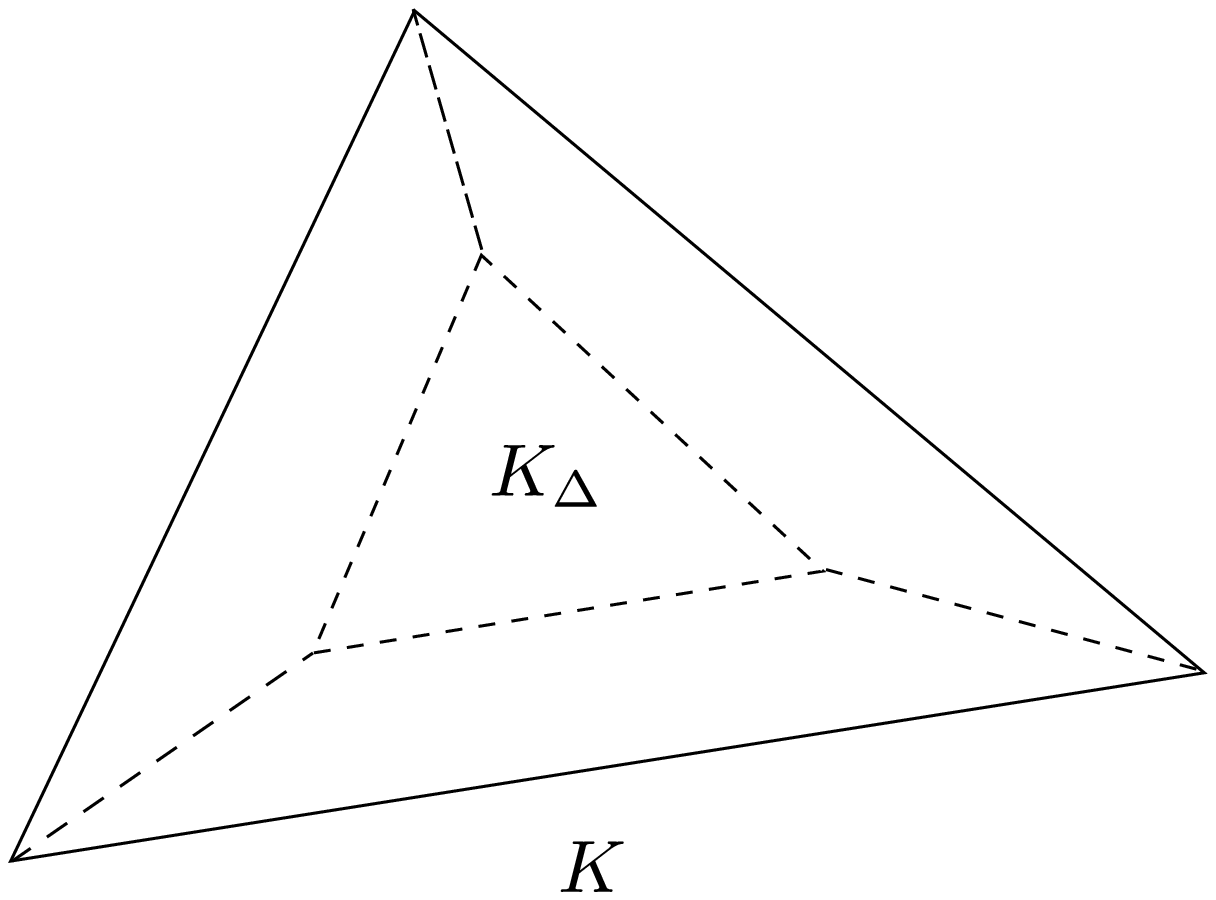}
\caption{partition of element $K$}
\label{fig:partitionK}
\end{minipage}\hfill
\begin{minipage}[t]{0.5\textwidth}
\includegraphics[width=1.0\textwidth]{./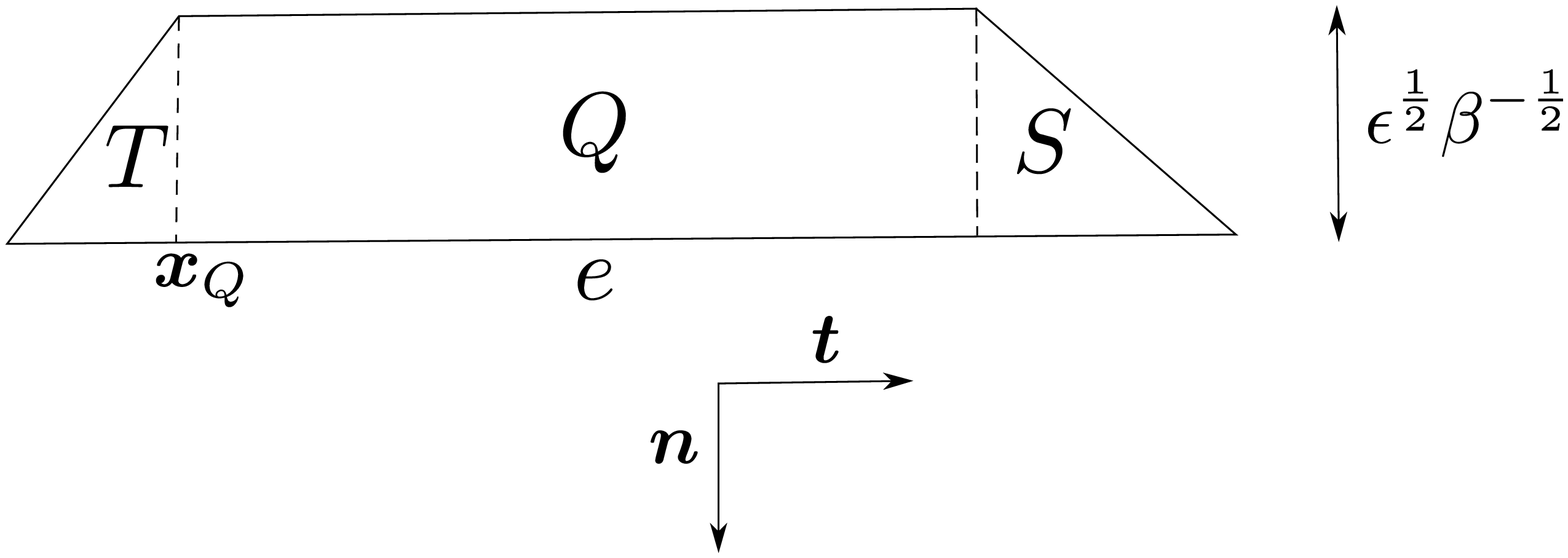}
\caption{partition of a trapezoid in $K$}
\label{fig:trapezoid}
\end{minipage}
\end{figure}

%The flux recovery in $Q_e$ shares the similar idea as in \cite{ainsworth2011confusion}, while
%the recovery in $T_e$ or $S_e$ is different from \cite{ainsworth2011confusion}.

With the above partition $\mathcal{T}_K$ of $K$, to guarantee $\fluxmesh\in\hdivk{K}$, its normal component on element interfaces of $\mathcal{T}_K$ is defined to be that of the numerical flux.
On the triangle $K_{\Delta}$, the recovered flux is chosen to be the numerical flux:
\begin{equation}
\label{eq:flux2K}
%    \fluxmesh|_{K_{\Delta}} = \fluxh|_{K_{\Delta}}\quad \text{in}\;  K_{\Delta},
    \fluxmesh = \fluxh \quad \text{in }  K_{\Delta}.
\end{equation}

On each rectangle, $\fluxmesh$ equals to $\fluxh$ plus a correction from an edge of $K_{\Delta}$ to its parallel edge of $K$ such that the normal components of $\fluxmesh$ is equal to the preassigned normal components on these two edges.
Specifically, let $\vect{x}_{Q_e} = \partial T_e \cap \partial Q_e \cap \partial K$ and $\vect{t}$ be a unit vector normal to $\normalvect$ (see Figure \ref{fig:trapezoid}), the recovered flux $\fluxmesh$ is given by:
    \begin{equation}
    \label{eq:flux2Q}
       \fluxmesh = \fluxh + (\kpa y -1) \left( \fluxh(\vect{x}_{Q_e}+x\vect{t})\cdot\normalvect - \ghatK(\vect{x}_{Q_e}+x\vect{t})  \right) \normalvect,
    \end{equation}
where $x=(\vect{x}-\vect{x}_{Q_e})\cdot\vect{t}$ and $y=(\vect{x}_{Q_e}-\vect{x})\cdot\normalvect$ are local coordinates of point $\vect{x}\in Q_e$.

On each triangle $\omega= T_e$ or $S_e$, $\fluxmesh|_{\omega}\in \RT_{k-1}(\omega)$ is defined similar to that in \eqref{eq:fluxDiffConv} (see \cite{localL2}):
\begin{equation}
\label{eq:flux2w}
    \left\{ \begin{alignedat}{2}
    \fluxmesh\cdot\normalvect_{\omega}&= \fluxh\cdot\normalvect_{\omega} \quad &&\text{on } \partial\omega \backslash\partial K,\\[1.5mm]
    \fluxmesh\cdot\normalvect_{\omega}&= \ghatK \quad &&\text{on } \partial \omega \cap \partial K,\\[1.5mm]
    \ddiv\fluxmesh &= \ddiv\fluxh + J_{\omega}  \quad &&\text{in } \omega,\\[1.5mm]
    \int_{\omega} \fluxmesh\cdot\vect{q} dx &= \int_{\omega} \fluxh \cdot\vect{q} dx \quad &&\forall\, \vect{q}\in \mathcal{Q}_{k-2}(\omega),\quad k\geq 2,
    \end{alignedat} \right.
\end{equation}
where
$\normalvect_{\omega}$ denotes the unit outward normal for $\omega$,
\begin{equation}
\label{eq:Jw}
   J_{\omega} := |\omega|^{-1} \int_{\partial\omega\cap \partial K} \left(\ghatK - \fluxh|_K\cdot\normalvect_{\omega}\right) ds  
\end{equation}
and the space $\mathcal{Q}_{k-2}(\omega)$ is defined as in \eqref{eq:Qk}.

Inside $K$, it is easy to verify that the normal component of $\fluxmesh$ is continuous
over $\partial\omega\backslash\partial K$ and $\partial Q_e\cap\partial K_{\Delta}$,
whose value is actually equal to that of $\fluxh$.
Hence $\fluxmesh\in\hdivk{K}$.
On $\partial K$, the normal component of $\fluxmesh$ is always given by $\ghatK$.
Therefore, we see that $\fluxmesh\in\hdiv$.

%\begin{remark}
%It can be seen that in \eqref{eq:flux2Q},
%$|\fluxh(\vect{x}_{Q_e}+x\vect{t})\cdot\normalvect - \ghatK(\vect{x}_{Q_e}+x\vect{t})| = (1-\lambda_{K,e})|j_e(\vect{x}_{Q_e}+x\vect{t})|$.
%%the magnitude of the quantity $\fluxh(\vect{x}_{Q_e}+x\vect{t})\cdot\normalvect - \ghatK(\vect{x}_{Q_e}+x\vect{t})$ in \eqref{eq:flux2Q} is bounded by the flux jump $|j_e|$.
%\end{remark}

%\begin{remark}
%\cdf{Note that in \eqref{eq:flux2w}, the divergence of recovered flux
%is chosen as a perturbation of $\ddiv \fluxh$ instead of $\fbar-\vecta\cdot\grad\uapprox-b\uapprox$ in \eqref{eq:fluxDiffConv}.
%This is because in the reaction regime (where $\kappa$ is dominant), 
%it is believed that $\uapprox$ is accurate enough,
%so the recovered flux is made ``close" to $\fluxh$. }
%\end{remark}

% subsection flux recovery in $K$ with $R_K >  \kpa^{-1}$ (end)
%%%%%%%%%%%% section  (end) %%%%%%%%%%%%%%%%

\section{Hybrid Estimator} % (fold)
\label{sec:hybridConfusion}
The hybrid estimator is defined as
\begin{equation}
\label{eq:xiConfusion}
    \xi = \left(\sum_{K\in\meshth} \xi_K^2\right)^{1/2} \quad \text{with}\quad 
    \xi_K^2 = \twonorm{\alrootn(\fluxmesh-\fluxh)}_K^2 + \gammaK^2 h_K^2\alphaK^{-1}\twonorm{\rhatK}_K^2,
\end{equation}
where $\gammaK$ is defined in \eqref{eq:gamma} and the modified element residual $\rhatK$ is given by
\begin{equation}
\label{eq:rhatK}
    \rhatK = \fbar-\ddiv\fluxmesh-\vecta\cdot\grad\uapprox-b\uapprox = r_K-\ddiv(\fluxmesh-\fluxh),
    \quad \forall \, K\in\meshth.
\end{equation}

%\subsection{Equivalence between $\xi_K$ and $\eta_K$} % (fold)
%\label{sub:Equivalence}

{To establish the reliability and the efficiency of the hybrid estimator $\xi$,
we use the following local equivalence between $\xi$ and $\eta$,
which is proved in Section \ref{sec:proofConfusion}.}
\begin{lemma}
\label{lm:equivalenceConfusion}
Let $\eta_K$ be the residual estimator in \eqref{eq:etaConfusion}
and $\xi_K$ be the hybrid estimator in \eqref{eq:xiConfusion}, respectively.
Then there exist positive constants $C_1$ and $C_2$ such that
\begin{equation}
\label{eq:equivalenceConfusion}
   C_1 \xi_K \leq \eta_K \leq C_2 \sum_{K'\subset \omega_K} \xi_{K'}.
\end{equation}
\end{lemma}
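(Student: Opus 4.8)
The plan is to establish the two inequalities in \eqref{eq:equivalenceConfusion} separately, treating each regime and (in the convection/reaction-dominated case) each of the two element types from Sections \ref{sub:flux1} and \ref{sub:flux2}. In all cases the strategy is the same: relate the two pieces of $\xi_K$ — namely $\twonorm{\alrootn(\fluxmesh-\fluxh)}_K$ and the modified residual term $\gammaK h_K \alphaK^{-1/2}\twonorm{\rhatK}_K$ — to the residual ingredients $r_K$ and $j_e$ appearing in $\eta_K$ (and $\eta_{K'}$ for $K'\subset\omega_K$), using that $\fluxmesh-\fluxh\in\RT_{k-1}(K)$ has prescribed degrees of freedom and that $\rhatK = r_K - \ddiv(\fluxmesh-\fluxh)$ by \eqref{eq:rhatK}.

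First I would prove the lower bound $C_1\xi_K\le\eta_K$. By the definition of $\fluxmesh$ via its degrees of freedom (\eqref{eq:fluxDiffConv}, or \eqref{eq:flux2K}–\eqref{eq:flux2w} in the large-element case), the normal component of $\fluxmesh-\fluxh$ on $\partial K$ is $\ghatK-\fluxh\cdot\normalvect$, whose jump across each interior $e\in\edgek$ is controlled by $\lambda_{K,e}$ or $(1-\lambda_{K,e})$ times $j_e$; since $\lambda_{K,e}\alphaK^{-1/2}\le C\alpha_e^{-1/2}$ by the choice of weights, $h_K^{1/2}\alphaK^{-1/2}\twonorm{(\fluxmesh-\fluxh)\cdot\normalvect}_{\partial K}\le C(\sum_{e\in\edgek}\gammae h_e\alpha_e^{-1}\twonorm{j_e}_e^2)^{1/2}$ after inserting the $\gammae$ weights. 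The interior moments of $\fluxmesh-\fluxh$ against $\mathcal{Q}_{k-2}(K)$ vanish, and $\ddiv(\fluxmesh-\fluxh)=\PiKkneg(\fbar-\vecta\cdot\grad\uapprox-b\uapprox)+J_K-\ddiv\fluxh$ is a polynomial whose $L^2(K)$ norm is bounded, via a scaling/inverse-estimate argument on $\RT_{k-1}(K)$, by $h_K^{-1}\twonorm{(\fluxmesh-\fluxh)\cdot\normalvect}_{\partial K}$ plus $\twonorm{\PiKkneg r_K}_K$; multiplying by $\gammaK h_K\alphaK^{-1/2}$ and using $\gammaK\le\gammae\cdot(\text{shape constant})$ gives the divergence contribution in terms of $\eta_K$. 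A standard Raviart–Thomas norm-equivalence (the RT degrees of freedom control the $L^2$ norm up to $h_K$-powers) then bounds $\twonorm{\alrootn(\fluxmesh-\fluxh)}_K$ by the same quantities, and finally $\twonorm{\rhatK}_K\le\twonorm{r_K}_K+\twonorm{\ddiv(\fluxmesh-\fluxh)}_K$ closes the estimate. In the large-element case one does the analogous count on each sub-triangle $T_e,S_e$, on each rectangle $Q_e$ (where \eqref{eq:flux2Q} gives an explicit formula whose weighted norm is directly estimated — here the factor $\kpa$ combines with $y\le\kpaneg$ to stay $O(1)$), and on $K_\Delta$ where $\fluxmesh-\fluxh=0$; summing over the partition $\mathcal{T}_K$ recovers $\eta_K$.

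For the upper bound $\eta_K\le C_2\sum_{K'\subset\omega_K}\xi_{K'}$, I would go the other direction: the jump $j_e$ across an interior edge $e$ shared by $K$ and $\kenb$ equals $(\fluxh|_K-\fluxh|_{\kenb})\cdot\vne = [(\fluxh|_K-\fluxmesh)\cdot\vne] - [(\fluxh|_{\kenb}-\fluxmesh)\cdot\vne]$ because $\fluxmesh\cdot\vne$ is single-valued across $e$; bounding each bracket by a trace/inverse inequality on $\RT_{k-1}$ gives $\gammae^{1/2}h_e^{1/2}\alpha_e^{-1/2}\twonorm{j_e}_e\le C(\twonorm{\alrootn(\fluxmesh-\fluxh)}_K + \twonorm{\alrootn(\fluxmesh-\fluxh)}_{\kenb})$, which is why the neighborhood $\omega_K$ appears. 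For the Neumann edges, $j_e = \fluxh\cdot\normalvect-\gbar = (\fluxh-\fluxmesh)\cdot\normalvect$ since $\ghatK=\gbar$ there, handled the same way. The element residual is reconstructed from $r_K = \rhatK + \ddiv(\fluxmesh-\fluxh)$, so $\gammaK h_K\alphaK^{-1/2}\twonorm{r_K}_K\le\gammaK h_K\alphaK^{-1/2}\twonorm{\rhatK}_K + \gammaK h_K\alphaK^{-1/2}\twonorm{\ddiv(\fluxmesh-\fluxh)}_K$, and the last term is $\le C\twonorm{\alrootn(\fluxmesh-\fluxh)}_K$ by an inverse inequality $\twonorm{\ddiv\fluxt}_K\le Ch_K^{-1}\twonorm{\fluxt}_K$ valid on $\RT_{k-1}(K)$ with the $\gammaK h_K$ absorbing the $h_K^{-1}$ (the $\gammaK\le1$ factor is harmless). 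Summing the edge and element contributions over $\edgek$ yields $\eta_K\le C_2\sum_{K'\subset\omega_K}\xi_{K'}$.

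\textbf{Main obstacle.} The delicate point is not the RT scaling arguments (those are routine once the reference-element inverse and trace inequalities are fixed) but rather the bookkeeping of the $\alpha$- and $\gamma$-weights so that the constants are genuinely independent of the coefficient jump $\almax/\almin$ and of $\epsilon,\beta$. This requires carefully exploiting that $\lambda_{K,e}\alphaK^{-1}h_K = \lambda_{\kenb,e}\alphaKenb^{-1}h_{\kenb}\le\alpha_e^{-1}\min(h_K,h_{\kenb})$ and that $\gammaK\le C\gammae$ for $e\in\edgek$ by shape-regularity and the definition \eqref{eq:gamma} — i.e., the weights were chosen precisely so these cancellations happen. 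The second genuinely new ingredient, absent from \cite{localL2}, is verifying the bounds on the rectangles $Q_e$ and the sub-triangles $T_e, S_e$ in the $R_K>\kpaneg$ case: one must check that the explicit correction in \eqref{eq:flux2Q}, measured in the $\alrootn=\epsilon^{-1/2}$-weighted norm over $Q_e$ whose width is $\kpaneg=\epsilon^{1/2}\beta^{-1/2}$, contributes exactly at the scale $\gammae h_e\epsilon^{-1}\twonorm{j_e}_e^2$ with $\gammae=h_e^{-1}\epsilon^{1/2}\beta^{-1/2}$ active (since $R_K>\kpaneg$ forces $h_e\gtrsim\kpaneg$, so $\gammae<1$); getting this scale right, together with the divergence correction $J_\omega$ on $T_e,S_e$, is where the proof must be done with care rather than cited.
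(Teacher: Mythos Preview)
Your lower bound argument ($C_1\xi_K\le\eta_K$) is essentially the paper's: control $\fluxmesh-\fluxh$ through its $\RT_{k-1}$ degrees of freedom in Case~(i), and compute explicitly on $K_\Delta$, $Q_e$, $T_e$, $S_e$ in Case~(ii). That part is fine.

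The upper bound is where you diverge from the paper, and where there is a real gap. The paper does \emph{not} use trace/inverse inequalities on $\RT_{k-1}(K)$; it uses the element and (modified) face bubble functions $\psi_K,\psi_e$ of Verf{\"u}rth together with integration by parts. From $C\twonorm{r_K}_K^2\le(r_K,\psi_K r_K)_K=(\ddiv(\fluxmesh-\fluxh),\psi_K r_K)_K+(\rhatK,\psi_K r_K)_K$ one integrates by parts to move the derivative onto $\psi_K r_K$, and then the bubble estimates $\twonorm{\grad(\psi_K r_K)}_K\le C\gammaK^{-1}h_K^{-1}\twonorm{r_K}_K$ supply the correct weights; likewise for $j_e$ via $\psi_e$. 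The only structural requirement on $\fluxmesh$ is that it lie in $H(\ddiv;K)$, so the argument is identical in Cases~(i) and~(ii). Your route instead invokes $\twonorm{\ddiv\fluxt}_K\le Ch_K^{-1}\twonorm{\fluxt}_K$ and a trace inequality ``on $\RT_{k-1}(K)$'', but when $R_K>\kpaneg$ the recovered flux $\fluxmesh$ is \emph{not} in $\RT_{k-1}(K)$ --- it is only piecewise polynomial on the sub-partition $\mathcal{T}_K$, with pieces of size $\kpaneg$ rather than $h_K$. The inverse inequality constant on $K$ then degenerates with $h_K\kpa$, and your cancellation ``$\gammaK h_K$ absorbing the $h_K^{-1}$'' no longer holds as written. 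One can rescue the idea by applying the inverse and trace estimates piece by piece on $Q_e,T_e,S_e$ with the local scale $\kpaneg$ (then $\gammaK h_K=\kpaneg$ absorbs $(\kpaneg)^{-1}$, and on $Q_e$ the explicit formula gives the two-sided bound directly), but this case split --- including the awkward situation where $K$ and $\kenb$ straddle the threshold --- is precisely what you flagged as new work only for the lower bound. The bubble-function argument sidesteps all of it.
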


Thanks to the equivalence result in Lemma \ref{lm:equivalenceConfusion},
the reliability and efficiency of the hybrid estimator $\xi$ follow immediately from
the corresponding results of the residual estimator $\eta$.

%The estimates below of $\xi$ in the diffusion-dominated case are parallel to Theorem \ref{thm:etaeff} and Theorem \ref{thm:etarelia} of $\eta$.

\begin{theorem}
    In the \textnormal{diffusion-dominated} regime,
    let $u$ be the exact solution of \eqref{eq:weakform} and $\uapprox$ be the finite element solution in \eqref{eq:feformDiff}.
    For the hybrid estimator $\xi$ defined in \eqref{eq:xiConfusion},
    there exists a constant $C_1$ such that 
    \[
        \xi_K \leq C_1 \left( \enorm{\uapprox-u}_{\omega_K} 
        + \sum_{K'\subseteq \omega_K} \Theta_{K'} \right), 
        \quad \forall\, K \in \meshth.
    \]
    Furthermore, under the monotonicity assumption \textnormal{\cite[Hypothesis 2.7]{bernardi2000}} of $\alpha$,
    there exists a positive constant $C_2$ such that
    \[
        \enorm{u-\uapprox}^2 \leq C_2 \left( \xi^2 + \sum_{K\in\meshth} \Theta_K^2 \right).
    \]
\end{theorem}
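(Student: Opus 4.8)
The plan is to deduce both statements directly from the local equivalence of Lemma~\ref{lm:equivalenceConfusion}, so that no genuinely new estimate is needed: the reliability and efficiency of $\xi$ are simply inherited from the corresponding properties of $\eta$ established in Theorems~\ref{thm:etarelia} and~\ref{thm:etaeff}.

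First I would treat the efficiency bound. The left inequality in \eqref{eq:equivalenceConfusion} gives $\xi_K \le C_1^{-1}\,\eta_K$ for every $K\in\meshth$; composing this with the robust local efficiency of the residual estimator in Theorem~\ref{thm:etaeff} yields
\[
    \xi_K \;\le\; C_1^{-1}\,\eta_K \;\le\; C\left(\enorm{\uapprox-u}_{\omega_K}+\sum_{K'\subseteq\omega_K}\Theta_{K'}\right),
\]
which is the asserted bound, the constant being the product of two constants that depend only on the shape parameter of $\meshth$ and the polynomial degree $k$, and in particular not on $\almax/\almin$.

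For the reliability bound I would start from the right inequality in \eqref{eq:equivalenceConfusion}, namely $\eta_K \le C_2\sum_{K'\subset\omega_K}\xi_{K'}$. Since shape regularity \eqref{eq:shaperegular} bounds the number of elements in any patch $\omega_K$ by a constant, the Cauchy--Schwarz inequality gives $\eta_K^2 \le C\sum_{K'\subset\omega_K}\xi_{K'}^2$; summing over $K\in\meshth$ and invoking the finite overlap of the family $\{\omega_K\}_{K\in\meshth}$ (again a consequence of shape regularity) produces $\eta^2 \le C\,\xi^2$. Inserting this into the reliability estimate of Theorem~\ref{thm:etarelia}, valid under the monotonicity assumption \cite[Hypothesis~2.7]{bernardi2000}, gives
\[
    \enorm{u-\uapprox}^2 \;\le\; C\left(\eta^2+\sum_{K\in\meshth}\Theta_K^2\right) \;\le\; C_2\left(\xi^2+\sum_{K\in\meshth}\Theta_K^2\right),
\]
which is the claim.

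In fact there is no substantive obstacle at this stage: all the analytic work has already been carried out in Lemma~\ref{lm:equivalenceConfusion} --- whose proof in Section~\ref{sec:proofConfusion} rests on the flux recovery of Section~\ref{sec:fluxConfusion} --- and in the cited robustness results for $\eta$. The only point requiring attention is that the patch-counting constants entering the Cauchy--Schwarz and finite-overlap steps depend exclusively on the mesh shape regularity \eqref{eq:shaperegular}, not on the diffusion jump $\almax/\almin$; this is precisely what keeps the final reliability and efficiency constants robust.
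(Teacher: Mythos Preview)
Your proposal is correct and matches the paper's own proof, which simply states that the efficiency and reliability bounds are direct consequences of Lemma~\ref{lm:equivalenceConfusion} together with Theorems~\ref{thm:etaeff} and~\ref{thm:etarelia}. You have spelled out the patch-counting and finite-overlap details that the paper leaves implicit, but the approach is identical.
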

\begin{proof}
    The efficiency and reliability bounds are direct consequences of Lemma \ref{lm:equivalenceConfusion}, Theorem \ref{thm:etaeff} and \ref{thm:etarelia}.
\end{proof}

\begin{theorem}
\label{thm:xiConvDiff}
    In the \textnormal{convection/reaction-dominated} regime,
    let $u$ and $\uapprox$ be the solutions of \eqref{eq:weakform} and \eqref{eq:feformDiff} or \eqref{eq:feformConv}, respectively.
    The hybrid estimator $\xi$ defined in \eqref{eq:xiConfusion} satisfies
\[
    \enorm{u-\uapprox}^2 + \enorm{\vecta\cdot\grad(u-\uapprox)}_*^2 \leq C_1 
    \left( \xi^2+ \sum_{K\in\meshth}  \Theta_K^2  \right) 
\]
and 
\[
    \xi^2  \leq C_2 \left( \enorm{u-\uapprox}^2 + 
    \enorm{\vecta\cdot\grad(u-\uapprox)}_*^2 +  \sum_{K\in\meshth} \Theta_K^2 \right),
\]
where the $C_1$ and $C_2$ are positive constants.
\end{theorem}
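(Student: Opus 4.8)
\textbf{Proof proposal for Theorem~\ref{thm:xiConvDiff}.}
The plan is to derive both bounds by combining the local equivalence of Lemma~\ref{lm:equivalenceConfusion} with the residual-estimator bounds of Theorem~\ref{thm:etaConvDiff}, exactly in the spirit of the diffusion-dominated theorem above. First I would square the left inequality in \eqref{eq:equivalenceConfusion} and sum over $K\in\meshth$, obtaining $C_1^2\,\xi^2 = C_1^2\sum_K \xi_K^2 \le \sum_K \eta_K^2 = \eta^2$; together with the reliability half of Theorem~\ref{thm:etaConvDiff} this gives
\[
    \enorm{u-\uapprox}^2 + \enorm{\vecta\cdot\grad(u-\uapprox)}_*^2 \le C\Big(\eta^2 + \sum_{K\in\meshth}\Theta_K^2\Big) \le C\,C_1^{-2}\Big(\xi^2 + \sum_{K\in\meshth}\Theta_K^2\Big),
\]
which is the asserted reliability estimate with $C_1 := C\,C_1^{-2}$ (abusing notation for the generic constant). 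For the efficiency estimate I would start from the right inequality in \eqref{eq:equivalenceConfusion}, $\eta_K \le C_2\sum_{K'\subset\omega_K}\xi_{K'}$, square it, apply the Cauchy--Schwarz inequality (the number of elements in $\omega_K$ is bounded by the shape-regularity constant), and sum over $K$; the bounded-overlap property of the patches $\{\omega_K\}$ then yields $\eta^2 = \sum_K \eta_K^2 \le C\sum_K \xi_K^2 = C\,\xi^2$. Combining this with the efficiency half of Theorem~\ref{thm:etaConvDiff} gives $\xi^2 \le C\,\eta^2 \le \cdots$, wait --- actually I need the other direction: from Theorem~\ref{thm:etaConvDiff}, $\eta^2 \le C_2\big(\enorm{u-\uapprox}^2 + \enorm{\vecta\cdot\grad(u-\uapprox)}_*^2 + \sum_K\Theta_K^2\big)$, and from the left inequality of \eqref{eq:equivalenceConfusion} summed, $\xi^2 \le C_1^{-2}\eta^2$, so directly $\xi^2 \le C_1^{-2}C_2\big(\enorm{u-\uapprox}^2 + \enorm{\vecta\cdot\grad(u-\uapprox)}_*^2 + \sum_K\Theta_K^2\big)$, which is the claim. (The patch-overlap argument is only needed if one insists on passing through $\eta$ in the direction $\eta \lesssim \xi$; here the left inequality of \eqref{eq:equivalenceConfusion} already bounds $\xi_K$ by $\eta_K$ elementwise, so the efficiency direction is immediate.)

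The only genuinely non-routine point is that Theorem~\ref{thm:etaConvDiff} is stated for $\uapprox$ solving either \eqref{eq:feformDiff} or \eqref{eq:feformConv}, and Lemma~\ref{lm:equivalenceConfusion} must be checked to apply in both cases; since the hybrid estimator $\xi_K$ and the residual estimator $\eta_K$ are defined purely in terms of $\uapprox$, $\fluxh$, $\fluxmesh$ and the data --- without reference to which discrete equation $\uapprox$ satisfies --- the equivalence is a statement about the estimators themselves and is insensitive to the choice of scheme. Hence I expect the proof to reduce to the one-line invocation ``direct consequence of Lemma~\ref{lm:equivalenceConfusion} and Theorem~\ref{thm:etaConvDiff}'', mirroring the proof of the diffusion-dominated theorem; the substantive work has already been deferred to Section~\ref{sec:proofConfusion}, where Lemma~\ref{lm:equivalenceConfusion} is established. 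The main obstacle, if any, lies entirely in that deferred lemma --- in particular in verifying the equivalence on elements with $R_K > \kpaneg$, where $\fluxmesh$ is the piecewise construction of Section~\ref{sub:flux2} and one must control $\twonorm{\alrootn(\fluxmesh-\fluxh)}_K$ and $\twonorm{\rhatK}_K$ in terms of $\eta_K$ using the explicit corrections \eqref{eq:flux2Q}--\eqref{eq:flux2w} and the scaling built into $\gammaK$ --- but that is outside the scope of the present theorem's proof.

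\begin{proof}
    Both estimates are immediate consequences of Lemma~\ref{lm:equivalenceConfusion} and Theorem~\ref{thm:etaConvDiff}.
    Squaring the first inequality in \eqref{eq:equivalenceConfusion} and summing over $K\in\meshth$ gives $C_1^2\,\xi^2\le\eta^2$, so the reliability bound of Theorem~\ref{thm:etaConvDiff} yields
    \[
        \enorm{u-\uapprox}^2 + \enorm{\vecta\cdot\grad(u-\uapprox)}_*^2 \le C\Big(\sum_{K\in\meshth}\eta_K^2 + \sum_{K\in\meshth}\Theta_K^2\Big) \le C\,C_1^{-2}\Big(\xi^2 + \sum_{K\in\meshth}\Theta_K^2\Big).
    \]
    Conversely, the same inequality $\xi^2\le C_1^{-2}\eta^2$ combined with the efficiency bound of Theorem~\ref{thm:etaConvDiff} gives
    \[
        \xi^2 \le C_1^{-2}\sum_{K\in\meshth}\eta_K^2 \le C_1^{-2}C_2\Big(\enorm{u-\uapprox}^2 + \enorm{\vecta\cdot\grad(u-\uapprox)}_*^2 + \sum_{K\in\meshth}\Theta_K^2\Big).
    \]
    Since the constants in Lemma~\ref{lm:equivalenceConfusion} and Theorem~\ref{thm:etaConvDiff} are independent of $\epsilon$, $\beta$, and any mesh-size, this completes the proof.
\end{proof}
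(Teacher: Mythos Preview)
Your overall strategy matches the paper's one-line proof (``follows from Lemma~\ref{lm:equivalenceConfusion} and Theorem~\ref{thm:etaConvDiff}''), and your efficiency argument is correct. But the reliability argument in your formal proof contains a genuine error: from $C_1^2\,\xi^2\le\eta^2$ you conclude
\[
   C\Big(\sum_{K}\eta_K^2 + \sum_{K}\Theta_K^2\Big)\ \le\ C\,C_1^{-2}\Big(\xi^2+\sum_K\Theta_K^2\Big),
\]
which would require $\eta^2\le C_1^{-2}\xi^2$ --- the \emph{opposite} of what you have. The left inequality in \eqref{eq:equivalenceConfusion} only tells you $\xi^2\lesssim\eta^2$, which is exactly what you need for efficiency (and you use it correctly there), but it says nothing about bounding $\eta^2$ by $\xi^2$.

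For the reliability direction you must use the \emph{right} inequality in \eqref{eq:equivalenceConfusion}, $\eta_K\le C_2\sum_{K'\subset\omega_K}\xi_{K'}$, together with the patch-overlap argument you sketched and then discarded: square, apply Cauchy--Schwarz over the finitely many $K'\subset\omega_K$, sum over $K$, and use that each $K'$ lies in at most a bounded number of patches $\omega_K$ (shape regularity) to obtain $\eta^2\le C\,\xi^2$. Only then does the reliability bound of Theorem~\ref{thm:etaConvDiff} transfer to $\xi$. In short, you identified both halves of Lemma~\ref{lm:equivalenceConfusion} and the overlap argument correctly in your exploratory paragraph, but in the final proof you attached them to the wrong directions.
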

\begin{proof}
    The theorem follows from Lemma \ref{lm:equivalenceConfusion} and Theorem \ref{thm:etaConvDiff}.
\end{proof}

For the singularly perturbed reaction-diffusion equation \eqref{eq:modelReaction},
in addition to the \emph{global} efficiency bound in Theorem \ref{thm:xiConvDiff},
$\xi_K$ satisfies a \emph{local} efficiency bound as a counterpart of Theorem \ref{thm:etaReaction}.
\begin{theorem}
    Let $u$ and $\uapprox$ be the respective exact and finite element solutions
    of the singularly perturbed reaction-diffusion equation in \eqref{eq:modelReaction}.
    The hybrid error indicator $\xi_K$ in \eqref{eq:xiConfusion}
    satisfies the following estimate
\[
    \xi_K^2 \leq C \left( \enormx{u-\uapprox}{\omega_K}^2 +
             \sum_{K'\subset \omega_K} \Theta_{K'}^2 \right),
\]
where $C$ is a positive constant.
\end{theorem}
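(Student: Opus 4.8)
The plan is to obtain the local efficiency bound for $\xi_K$ by combining the local equivalence of Lemma~\ref{lm:equivalenceConfusion} with the local efficiency of the residual estimator $\eta_K$ established by Verf{\"u}rth in Theorem~\ref{thm:etaReaction}. Specifically, the right-hand inequality in \eqref{eq:equivalenceConfusion} gives
\[
    \eta_K \leq C_2 \sum_{K'\subset\omega_K} \xi_{K'},
\]
which is the \emph{wrong} direction for deducing efficiency of $\xi_K$; instead I would use the \emph{left} inequality, $C_1\xi_K\leq\eta_K$, so that $\xi_K^2\leq C_1^{-2}\eta_K^2$, and then apply Theorem~\ref{thm:etaReaction} to bound $\eta_K^2$ by $C(\enormx{u-\uapprox}{\omega_K}^2 + \sum_{K'\subset\omega_K}\Theta_{K'}^2)$. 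Chaining these two estimates yields exactly the claimed bound with $C = C_1^{-2}\cdot C$.

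First I would note that in the reaction-dominated setting of \eqref{eq:modelReaction} the hypotheses of Section~\ref{sec:modelConfusion} hold with $\beta=c_b=1$, so Theorem~\ref{thm:etaReaction} is applicable verbatim. Second, I would invoke Lemma~\ref{lm:equivalenceConfusion}, which is proved in Section~\ref{sec:proofConfusion} and holds in both regimes (in particular the convection/reaction-dominated regime, of which \eqref{eq:modelReaction} is the special case $\vecta=0$, $b=1$), to get the elementwise bound $\xi_K\leq C_1^{-1}\eta_K$. Third, I would substitute the local efficiency of $\eta_K$ from Theorem~\ref{thm:etaReaction}. Squaring and relabeling constants completes the argument:
\[
    \xi_K^2 \leq C_1^{-2}\eta_K^2 \leq C\left(\enormx{u-\uapprox}{\omega_K}^2 + \sum_{K'\subset\omega_K}\Theta_{K'}^2\right).
\]

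The argument is essentially a one-line deduction once the two ingredients are in place, so there is no serious obstacle at this stage; the real work has already been done in proving Lemma~\ref{lm:equivalenceConfusion} (which must be checked to hold with the $\gammaK$-weights of \eqref{eq:gamma} in the $R_K>\kpaneg$ case where the flux is defined piecewise) and in citing Verf{\"u}rth's local efficiency result. If I wanted to be careful I would double-check that the left inequality of \eqref{eq:equivalenceConfusion} is genuinely local in $K$ (it is, since $\xi_K$ and $\eta_K$ are both defined on $K$ together with data on its faces) so that no patch-overlap constant degrades the final bound, and that the data oscillation $\Theta_K$ appearing in $\xi$'s analysis is the same one used in Theorem~\ref{thm:etaReaction}, which it is by construction.
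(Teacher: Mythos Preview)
Your argument is correct and is exactly the approach the paper takes: although the paper does not write out a formal proof for this theorem, the pattern of the preceding theorems (e.g., Theorem~\ref{thm:xiConvDiff}) makes clear that it is an immediate consequence of the left inequality in Lemma~\ref{lm:equivalenceConfusion} combined with Theorem~\ref{thm:etaReaction}, precisely as you describe.
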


\begin{remark}
    For the singularly perturbed reaction-diffusion equation \eqref{eq:modelReaction},
    in addition to the robust residual estimator in \cite{verf1998reaction},
    other types of estimators have been proposed over the years.
A general recovery-based estimator was proposed in \cite{caizhang2010}
based on projecting the numerical flux onto an $\hdiv$-conforming space. 
The global $L^2$-projection, however, may be regarded computationally expensive by some researchers.
A series of estimators based on flux equilibration
were explored in \cite{ainsworth1999reaction,ainsworth2011confusion,ainsworth2014reaction},
where the estimators yield guaranteed upper bounds of the true error.
The flux equilibration techniques in \cite{ainsworth2011confusion,ainsworth2014reaction}
require solving local least square problems associated with each vertex patch and only piecewise linear finite element approximations were discussed.
For diffusion equations, flux equilibration was discussed for higher order elements in \cite{equiflux}.
For convection-diffusion-reaction equations,
a flux equilibration procedure for linear elements was proposed in \cite{ainsworth2013confusion}, where the resulting estimator yields a guaranteed upper bound of the true error.
The results in \cite{ainsworth2013confusion} were derived under the assumption $\ddiv\vecta=0$
and the robustness was restricted to the special case of vanishing reaction $($i.e., $\beta=0)$.
\end{remark}
%%%%%%%%%%%% section  (end) %%%%%%%%%%%%%%%%

\section{Proof of Lemma \ref{lm:equivalenceConfusion}} % (fold)
\label{sec:proofConfusion}
In diffusion-dominated regime,
the proof of \eqref{eq:equivalenceConfusion} is the same as that of \cite[Theorem 4.2]{localL2}.
In convection/reaction-dominated regime,
the proof of \eqref{eq:equivalenceConfusion} is more complicated.
We first prove the upper bound of \eqref{eq:equivalenceConfusion} in Section \ref{sub:Upper bound}.
The lower bound of \eqref{eq:equivalenceConfusion} will be justified
in Section \ref{sub:lower12}.

\subsection{Upper bound} % (fold)
\label{sub:Upper bound}
The proof of the upper bound
is analogous to that in \cite[Theorem 4.2]{localL2},
which is essentially based on the proof of local efficiency of residual estimator
using properly chosen bubble functions.

Let $\psi_K$ be the standard element bubble function on $K$ and $\psi_e$ be modified face bubble function associated with $e$ defined in \cite{verf2005confusion}.
Unlike standard bubble functions that only depend on the geometry of the mesh, the modified face bubble function $\psi_e$ also involves $\epsilon$ and $\beta$ in order to fulfill the estimates in \eqref{eq:bubblebounds}.
It is known from \cite[Lemma 3.6]{verf2005confusion} that
\begin{equation}
\label{eq:bubblebounds}
\left\{\begin{alignedat}{1}
    \twonorm{\psi_K r_K}_K &\leq \twonorm{r_K}_K
    \leq C(\psi_K r_K, r_K)_K, \\[1.5mm]
    \twonorm{\grad (\psi_K r_K)}_K &\leq C \gammaK^{-1} h_K^{-1}\twonorm{r_K}_K,\\[1.5mm]
    \twonorm{j_e}_e^2 &\leq C (j_e,\psi_e j_e)_e,\\[1.5mm]
    \twonorm{\grad (\psi_e j_e)}_K &\leq C \gammae^{-1/2} h_e^{-1/2}\twonorm{j_e}_e,
    \quad K\subseteq \omega_e,\\[1.5mm]
    \text{and}\quad
    \twonorm{\psi_e j_e}_K &\leq C \gammae^{1/2} h_e^{1/2}\twonorm{j_e}_e, 
    \quad K\subseteq \omega_e.
\end{alignedat}\right.
\end{equation}
%\cdf{in the last two inequalities above, $K\subseteq \omega_e$}

\begin{proof}[Proof of the upper bound in \eqref{eq:equivalenceConfusion}]
With the help of \eqref{eq:bubblebounds}, \eqref{eq:rhatK}, integration by parts, 
the fact that $\psi_K$ vanishes on $\partial K$,
and the Cauchy-Schwarz inequality, 
we deduce that
\begin{equation*}
\begin{aligned}
    C_1\twonorm{r_K}_K^2
    &\leq \left( r_K, \psi_K r_K  \right)_K 
            = (\ddiv(\fluxmesh-\fluxh), \psi_K r_K )_K + (\rhatK, \psi_K r_K)_K  \\[2mm]
    &= (\fluxh-\fluxmesh,\grad (\psi_K r_K))_K +  (\rhatK, \psi_K r_K)_K \\[2mm]
    &\leq C_2 \twonorm{r_K}_K \left( \gammaK^{-1} h_K^{-1}\twonorm{\fluxmesh-\fluxh}_K + \twonorm{\rhatK}_K \right),
\end{aligned}
\end{equation*}
which implies
\begin{equation}
\label{eq:resxi}
    \gammaK h_K\epsilon^{-1/2}\twonorm{r_K}_K
    \leq C \big( \epsilon^{-1/2}\twonorm{\fluxmesh-\fluxh}_K + \gammaK h_K\epsilon^{-1/2}\twonorm{\rhatK}_K \big)\leq C\xi_K.
\end{equation}

To estimate $\twonorm{j_e}_e$,
it follows from \eqref{eq:bubblebounds}, integration by parts, the Cauchy-Schwarz and the triangle inequalities,
and \eqref{eq:resxi} that
\begin{equation*}
%\label{eq:jexi}
\begin{aligned}
    \twonorm{j_e}^2 &\leq C (j_e,\psi_e j_e)_e \\
    &= C \sum_{K\subseteq \omega_e} \left( (\fluxh-\fluxmesh,\grad (\psi_e j_e))_K + (\ddiv(\fluxh-\fluxmesh),\psi_e j_e)_K  \right) \\
    &\leq C \frac{\epsilon^{1/2}\twonorm{j_e}_e}{\gammae^{1/2} h_e^{1/2}} 
        \sum_{K\subseteq \omega_e} \left( \epsilon^{-1/2}\twonorm{\fluxmesh-\fluxh}_K 
          + \gammae h_e\epsilon^{-1/2}(\twonorm{\rhatK}_K + \twonorm{r_K}_K)
             \right) \\
  &\leq C \frac{\epsilon^{1/2}\twonorm{j_e}_e}{\gammae^{1/2} h_e^{1/2}}\sum_{K\subseteq \omega_e} \xi_K.
\end{aligned}
\end{equation*}
Now, the upper bound in \eqref{eq:equivalenceConfusion} is a direct consequence of
\eqref{eq:resxi} and the definition of $\eta_{K}$ in \eqref{eq:etaConfusion}.
\end{proof}

% subsection Upper bound in $K$ (end)

%\subsection{Lower bound when $R_K\leq \kpaneg$}
%\label{sub:lower1}
\subsection{Lower bound}
\label{sub:lower12}
In this section, we prove the lower bound in \eqref{eq:equivalenceConfusion}
in the convection/reaction-dominated regime. 
That is, for any $K\in\meshth$, there exists a positive constant $C$
independent of $h_K$, $\epsilon$, and $\beta$ such that
\begin{equation}
\label{eq:lower12}
    \xi_K\leq C \eta_K.
\end{equation}
This is proceeded in two cases:
(i) $R_k\leq\kpaneg$ and (ii) $R_k>\kpaneg$.

\begin{proof}[Proof in \textnormal{Case (i)}]
In this case, 
the inequality $h_K>R_K$ and \eqref{eq:shaperegular} give 
%$\kpaneg\geq R_K\geq \frac{1}{C_0} h_K$ and 
\begin{equation*}
%    \tauK \geq \min\{\frac{1}{C_0}h_K\epsilon^{-1/2}, \beta^{-1/2}\} = \frac{1}{C_0}h_K\epsilon^{-1/2}.
    1\geq \gammaK = h_K^{-1} \min\{ h_K, \epsilon^{1/2}\beta^{-1/2}\}
    \geq  h_K^{-1} \min\{ h_K, \frac{h_K}{C_0} \} = \frac{1}{C_0},
\end{equation*}
which implies the weight $\gammaK=O(1)$ is independent of $h_k$, $\epsilon$, and $\beta$.
Hence, to show the validity of \eqref{eq:lower12}, it suffices to prove that 
\begin{equation}
\label{eq:lower1proof}
    \twonorm{\fluxmesh-\fluxh}_K^2 + h_K^2\twonorm{\rhatK}_K^2
    \leq C  \left( h_K^2 \twonorm{r_K}_K^2 + \sum_{e\in\edgek} h_e\twonorm{j_e}_e^2 \right).
\end{equation}
To this end, note first that $\ddiv\fluxh|_K \in P_{k-1}(K)$
and that the recovered flux defined in \eqref{eq:fluxDiffConv} satisfies
\begin{equation}
\label{eq:divdiff1} 
    \ddiv\left(\fluxmesh-\fluxh\right)= \Pi_K^{k-1} r_K + J_K,
\end{equation}
which, together with \cite[Lemma 4.1]{localL2} and the triangle inequality,
implies 
\begin{equation}
\label{eq:fluxbound1}
\begin{aligned}
    \twonorm{\fluxmesh-\fluxh}_K 
    &\leq C \left( h_K\twonorm{\ddiv(\fluxmesh-\fluxh)}_K + \sum_{e\in\edgek} h_e^{1/2}\twonorm{j_e}_e  \right) \\
%    &\leq C \left( h_K\twonorm{\ddiv\fluxmesh+\vecta\cdot\grad\uapprox+b\uapprox-\fbar}_K 
%        + h_K\twonorm{-\epsilon\Delta\uapprox+\vecta\cdot\grad\uapprox+b\uapprox-\fbar}_K
%        + h_K^{1/2}\sum_{e\in\edgek}\twonorm{j_e}_e  \right) \\
    &\leq C  \left( h_K\twonorm{r_K}_K + h_K\twonorm{J_K}_K + \sum_{e\in\edgek}h_e^{1/2}\twonorm{j_e}_e \right).
\end{aligned}
\end{equation}
To bound the modified element residual $\rhatK$ in \eqref{eq:rhatK},
by \eqref{eq:divdiff1} we have
\begin{equation*}
%\label{eq:rhatKdef2} 
    \rhatK = r_K - \ddiv(\fluxmesh-\fluxh) = \left(I-\Pi_K^{k-1}\right)r_K - J_K,
\end{equation*}
which yields
\begin{equation}
\label{eq:rhatKbound1}
    \twonorm{\rhatK}_K\leq \twonorm{r_K}_K+\twonorm{J_K}_K.
\end{equation}
Now, \eqref{eq:lower1proof} is a direct consequence of \eqref{eq:fluxbound1}, \eqref{eq:rhatKbound1}, and the following bound
\[
  \twonorm{J_K}_K \leq C \left( \twonorm{r_K}_K+\sum_{e\in\edgek}h_e^{-1/2}\twonorm{j_e}_e \right),
\]
which follows from the divergence theorem and the triangle and the Cauchy-Schwarz inequalities that
\[
\begin{aligned}
    |J_K| &= |K|^{-1} \abs{\int_{\partial K} (\ghatK-\fluxh|_K\cdot\normalvect) ds - \int_K r_K dx} \\
    &\leq |K|^{-1} \left( \sum_{e\in\edgek}h_e^{1/2}\twonorm{j_e}_e+|K|^{-1/2}\twonorm{r_K}_K \right).
\end{aligned}
\]
This completes the proof of \eqref{eq:lower1proof} and, hence, \eqref{eq:lower12}.
\end{proof}

\begin{proof}[Proof in \textnormal{Case (ii)}]
%\subsection{Lower bound when $R_K > \kpaneg$}
%\label{sub:lower2}
When $R_K > \kpaneg$,
the fact that $h_S > R_S$ implies
\begin{equation}
\label{eq:gammaconvX}
    \gamma_{_S} = \epsilon^{1/2}h_S^{-1}\beta^{-1/2}, \quad S = K \text{ or } S\in\edgek.
\end{equation}
To prove \eqref{eq:lower12}, it suffices to show that 
\begin{equation}
\label{eq:lower2proof}
    \epsilon^{-1}\twonorm{\fluxmesh-\fluxh}_K^2 + \beta^{-1}\twonorm{\rhatK}_K^2
    \leq C  \left( \beta^{-1} \twonorm{r_K}_K^2 + \sum_{e\in\edgek} \epsilon^{-1/2}\beta^{-1/2}\twonorm{j_e}_e^2 \right).
\end{equation}
%\begin{equation}
%\label{eq:fluxbound2}
%    \epsilon^{-1/2}\twonorm{\fluxmesh-\fluxh}_K \leq C \eta_{K}
%\end{equation}
%and 
%\begin{equation}
%\label{eq:resbound2}
%    \gammaK h_K\epsilon^{-1/2}\twonorm{\rhatK}_K \leq C \eta_{K}.
%\end{equation}
%Recall that in this case $\fluxmesh|_K$ is defined piecewisely with respect to the partition of $K$ in Section \ref{sub:flux2}.
To this end, we first estimate $\twonorm{\fluxmesh-\fluxh}_K$.
Note that 
\[
    |\fluxmesh-\fluxh| = \begin{cases}
        0, &\text{in } K_{\Delta},\\
        (1-\lambda_{K,e})(1-\epsilon^{-1/2}\beta^{1/2}y)|j_e(\vect{x}_{Q_e}+x\vect{t})|, &\text{in } Q_e\\
    \end{cases}
\]
for all $e\in\edgek$, where $x,y$ are local coordinates in $Q_e$,
and 
\begin{equation}
\label{eq:diverrorJw}
    \ddiv(\fluxmesh-\fluxh) = J_{\omega}\quad \text{in}\; \omega = T_e, S_e.
\end{equation}
Define at this moment $Q_K=\cup_{e\in\edgek}Q_e$. A straightforward calculation gives
\begin{equation}
\label{eq:fluxbound2Q}
    \epsilon^{-1}\twonorm{\fluxmesh-\fluxh}_{K_{\Delta}\cup Q_K}^2 
    =\epsilon^{-1}\sum_{e\in\edgek}\twonorm{\fluxmesh-\fluxh}_{Q_e}^2 
%    \leq \frac{(1-\lambda_{K,e})^2}{3}\epsilon^{-1/2}\beta^{-1/2}\sum_{e\in\edgek}\twonorm{j_e}_e^2  
    \leq \frac{\epsilon^{-1/2}\beta^{-1/2}}{3}\sum_{e\in\edgek}\twonorm{j_e}_e^2.
\end{equation}
%In $K_{\Delta}$, by definition, $\fluxmesh-\fluxh=0$.
%In each trapezoid illustrated in Figure \ref{fig:trapezoid},
%we estimate the error in $Q_e$ and $\omega=T_e,S_e$ separately.
%From \eqref{eq:flux2Q} and
%\eqref{eq:gammaconvX},
%%it can be computed that (cf. \cite[Lemma 3]{ainsworth2011confusion})
%\begin{equation}
%\label{eq:fluxbound2Q}
%    \epsilon^{-1/2}\twonorm{\fluxmesh-\fluxh}_{Q_e} = \frac{1}{\sqrt{12}}\epsilon^{-1/4}\beta^{-1/4}\twonorm{j_e}_e.
%%    = \frac{1}{\sqrt{12}}\gammae^{1/2}h_e^{1/2}\twonorm{j_e}_e \leq \epsilon^{1/2}\eta_{K}.
%\end{equation}
The estimate of $\twonorm{\fluxmesh-\fluxh}_{\omega}$ for $\omega=T_e$ and $S_e$ is analogous to Case (i).
It follows from \eqref{eq:diverrorJw}, the fact that $\text{diam}(\omega)=O(\kpaneg)$, and the Cauchy-Schwarz inequality that
\begin{equation}
\label{eq:Jwbound} 
    \twonorm{\ddiv(\fluxmesh-\fluxh)}_{\omega} = \twonorm{J_{\omega}}_{\omega} 
    \leq C \epsilon^{-1/4}\beta^{1/4}\twonorm{j_e}_e,
\end{equation}
which, together with \cite[Lemma 4.1]{localL2}, implies
\[
\begin{aligned}
    \epsilon^{-1/2}\twonorm{\fluxmesh-\fluxh}_{\omega} 
     &\leq C \left( \beta^{-1/2}\twonorm{J_{\omega}}_{\omega} + \epsilon^{-1/4}\beta^{-1/4}\twonorm{j_e}_e \right) 
    \leq C \epsilon^{-1/4}\beta^{-1/4}\twonorm{j_e}_e
%       \leq C \gammae^{1/2}h_e^{1/2}\twonorm{j_e}_e 
%    \leq C \epsilon^{1/2} \eta_{K}.
\end{aligned}
\]
for all $e\in\edgek$.
Combining with \eqref{eq:fluxbound2Q} gives
\begin{equation}
\label{eq:fluxbound2}
    \epsilon^{-1}\twonorm{\fluxmesh-\fluxh}_K^2 \leq C \sum_{e\in\edgek} \epsilon^{-1/2}\beta^{-1/2}\twonorm{j_e}_e^2.
\end{equation}
It remains to estimate $\beta^{-1}\twonorm{\rhatK}_K^2$. By \eqref{eq:rhatK} and the triangle inequality, we have 
\begin{equation}
\label{eq:rhatKleqrkdiv}
    \twonorm{\rhatK}_K \leq \twonorm{r_K}_K + \twonorm{\ddiv(\fluxmesh-\fluxh)}_K.
\end{equation}
It follows from the definition of $\fluxmesh$ that
\[
    \abs{\ddiv(\fluxmesh-\fluxh)} = \begin{cases}
        0, &\text{in } K_{\Delta}, \\
        (1-\lambda_e)\epsilon^{-1/2}\beta^{1/2}|j_e(\vect{x}_{Q_e}+x\vect{t})|, &\text{in } Q_e, \\
        |J_{\omega}|, &\text{in } \omega = T_e, S_e
    \end{cases}
\]
for all $e\in\edgek$.
A straightforward calculation yields
\begin{equation*}
%\label{eq:sumdivQe}
\sum_{e\in\edgek}\twonorm{\ddiv(\fluxmesh-\fluxh)}_{Q_e} \leq \epsilon^{-1/4}\beta^{1/4}\sum_{e\in\edgek}\twonorm{j_e}_e,
\end{equation*}
which, together with \eqref{eq:Jwbound},
implies
\begin{equation}
\label{eq:diverrorleqje}
    \twonorm{\ddiv(\fluxmesh-\fluxh)}_K\leq  C \epsilon^{-1/4}\beta^{1/4}\sum_{e\in\edgek}\twonorm{j_e}_e.
\end{equation}
\eqref{eq:fluxbound2} -- \eqref{eq:diverrorleqje} 
%\eqref{eq:rhatKleqrkdiv}, and  
imply \eqref{eq:lower2proof}
and the proof is complete.

%Therefore, 
%\[
%    \beta^{-1}\twonorm{\rhatK}_K^2\leq C
%\left( \sum_{e\in\edgek}\epsilon^{-1/2}\beta^{-1/2}\twonorm{j_e}_e^2 + \beta^{-1}\twonorm{r_K}_K^2\right),
%\]
%which, together with \eqref{eq:fluxbound2}, completes the proof of \eqref{eq:lower2proof}.
\end{proof}

% subsection Equivalence of $\xi$ and $\et$ (end)

%\subsection{Estimates for $\xi$ in diffusion-dominated regime} % (fold)
%\label{sub:hybrid in diffusion-dominated regime}

%%%%%%%%%%%% section  (end) %%%%%%%%%%%%%%%%

\section{Numerical Experiments} % (fold)
\label{sec:numericalConfusion}
In numerical experiments, we consider the singularly perturbed reaction-diffusion problem in \eqref{eq:modelReaction}.
Here the domain is chosen as $\Omega = [-1,1]^2$.
The initial mesh in the adaptive mesh refinement consists of $4\times 4$ congruent squares, 
each of which is partitioned into two triangles connecting bottom-left and top-right corners.
As in \cite{dorfler1996, AFEM2002conv, localL2}, we use D{\"o}rfler's marking strategy \cite{dorfler1996} with $\theta_D=0.5$ (cf. \cite[Eq.(5.1)]{localL2}).
The newest-vertex bisection \cite{purdue1972} is used in the refinement.
For the finite element discretization, 
$P_1$ conforming element is used in all examples.
The exact error is denoted by $e = u-\uapprox$.
``DOFs" denotes the degrees of freedom and
``eff-ind" denotes the effectivity index, namely, either $\eta/\enorm{e}$ or $\xi/\enorm{e}$.

\begin{remark}
The performance of the hybrid estimator for diffusion-dominated problems can be seen from the numerical results in \cite{localL2}.
\end{remark}

\subsection{Test Problem 1} % (fold)
We first consider an example as in \cite[Example 1]{ainsworth2011confusion},
where the solution is smooth but, as pointed out in \cite{ainsworth2011confusion}, non-robust estimators do not perform well .
The exact solution is chosen as
\[
    u(x,y) = \cos(\pi x /2)\cos(\pi y/2)/(1+\epsilon\pi^2/2)
\]
and the data is $f(x,y) = \cos(\pi x /2)\cos(\pi y/2)$.
The aim of this example is to numerically show that the hybrid estimator $\xi$ is more accurate
than the residual estimator $\eta$
and to demonstrate that $\xi$ is less sensitive with respect to the variation of $\epsilon$.

\subsubsection{Effectivity with respect to $\epsilon$} % (fold)
\label{ssub:Effectivity}
% mesh for effectivity index test: uniform mesh with $2*10*10$ triangles
On a fixed uniform mesh composed of $200$ isosceles right triangles,
we vary $\epsilon$ and investigate the change of effectivity index for each estimator.
%shown in Figure \ref{fig:mesh0},
Numerical results are collected in Table \ref{tab:effectivity} for different choices of $\epsilon$.
It is easily seen from Table \ref{tab:effectivity} that
the hybrid estimator $\xi$ is more accurate and than the residual estimator $\eta$.
When $\epsilon$ changes gradually from $10^{-5}$ to $100$,
the effectivity indices for $\xi$ remain close to $1$,
while the effectivity indices for $\eta$ increase significantly from $0.66$ to $5.56$, 
by a factor of $8.4$.
This implies that, compared to the explicit residual estimator $\eta$, the hybrid estimator $\xi$ is less sensitive to the size of reaction.
%\begin{figure}[htbp]
%\centering
%\includegraphics[width=.3\textwidth]{./}
%\caption{Example 1 - Fixed mesh used for Table \ref{tab:effectivity}}
%\label{fig:mesh0}
%\end{figure}

\begin{table}
\caption{Example 1 - Effectivity indices for different $\epsilon$ on a fixed mesh}
\label{tab:effectivity}
\begin{center}
\begin{tabular}{|c|c|c|c|c|c|c|c|c|c|c|c|c|}
\hline  
$\epsilon$ & 1E-5 & 1E-4 & 5E-4 & 1E-3 & 5E-3 & 1E-2 & 5E-2 & 1E-1 & 1 & 10 & 100 \\
\hline 
$\eta$     & 0.66 & 0.66 & 0.93 & 1.21 & 2.22 & 2.81 & 4.83 & 5.58 & 5.57 & 5.56 & 5.56 \\
\hline 
$\xi$      & 0.80 & 0.84 & 1.09 & 1.35 & 1.20 & 1.28 & 1.38 & 1.38 & 1.36 & 1.36 & 1.36 \\
\hline 
\end{tabular}
\end{center}
\end{table}
% subsubsection Effectivity with respect to $\epsilon$ (end)

\subsubsection{Effectivity during adaptive mesh refinement} % (fold)
\label{ssub:Effectivity during adaptive mesh refinement}
In this section,
for each fixed $\epsilon$, 
we investigate the effectivity of each estimator during adaptive mesh refinement.
%The stopping criterion for AMR: $\text{DOFs} > 50000$.

The numerical results for the residual estimator $\eta$
are shown in Figure \ref{fig:res1e-43} -- \ref{fig:res1e-21} for different $\epsilon$.
(The result for $\epsilon=1$ or $10$ is quite similar to $\epsilon=10^{-1}$ and is thus not shown here.) 
It is obviously seen that the effectivity of $\eta$ strongly depends on the relation between $\epsilon$ and mesh-size.
Hence even though theoretically the constants in the a posteriori error estimates for $\eta$ are independent of $\epsilon$,
the practical performance does display a significant difference for different choices of $\epsilon$.
This is because the theoretical results can only provide 
lower and upper bounds of $\eta/\enorm{e}$, i.e., an interval that $\eta/\enorm{e}$ lies in,
and the exact value may vary in such a (possibly large) interval. 
In this case, the size of this interval could be quite large even though it is independent of $\epsilon$.

The numerical results for the hybrid estimator $\xi$
are shown in Figure \ref{fig:RT1e-43} -- \ref{fig:RT1e-21} for different $\epsilon$.
Unlike $\eta$, numerical results indicate that
$\epsilon$ does not have much influence on the effectivity of $\xi$ during the adaptive mesh refinement. 
Therefore, 
%in terms of accuracy or effectivity,
we see that the hybrid estimator is indeed less sensitive
than the residual estimator with respect to $\epsilon$.

\begin{figure}[htbp]
\centering
\begin{minipage}[t]{0.5\textwidth}
\centering
\includegraphics[width=.9\textwidth]{./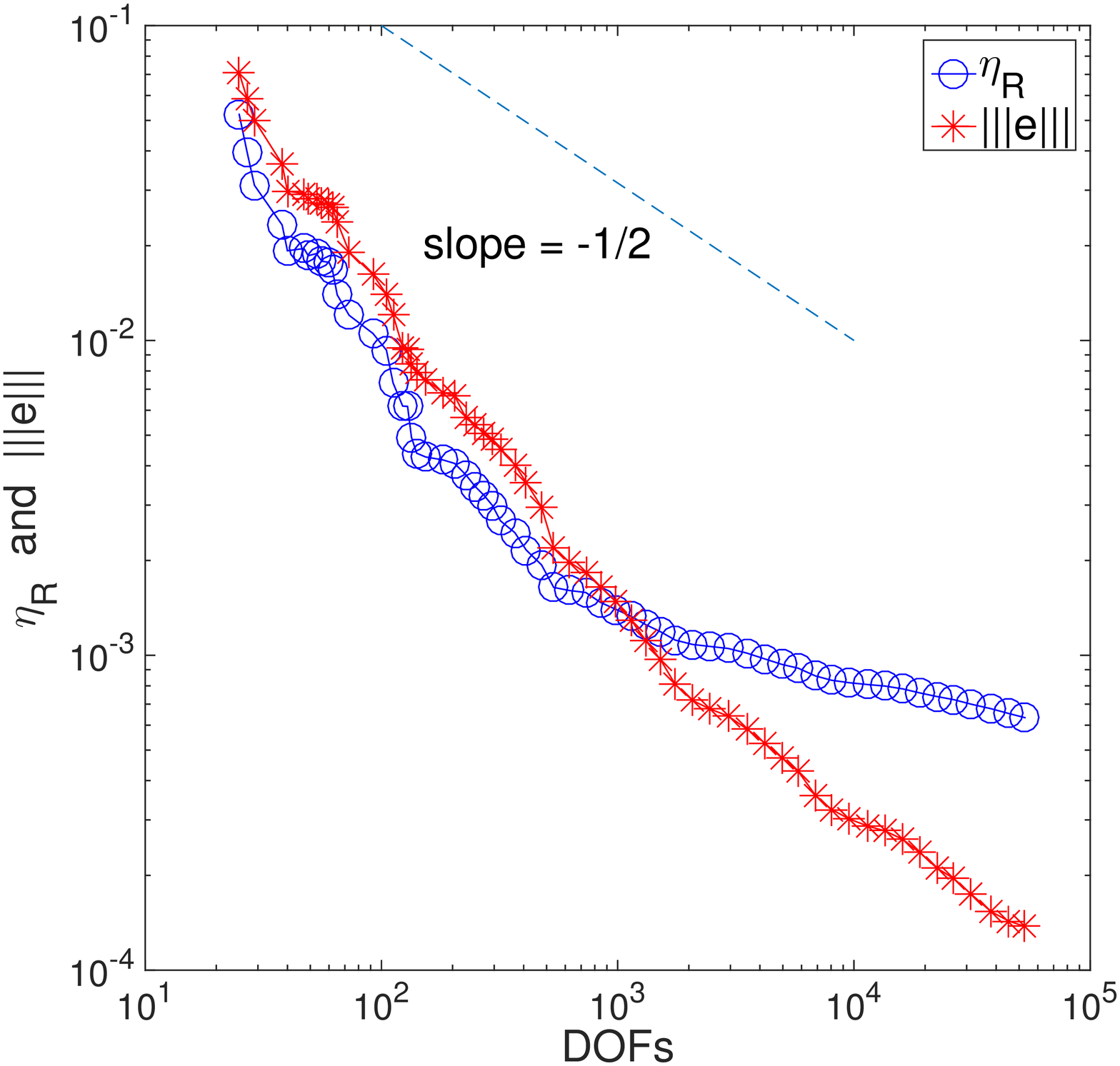}
%\caption{Example 1 - $\enorm{e}$ and $\eta$ with $\epsilon = 10^{-4}$}
%\label{fig:res1e-4}
\end{minipage}\hfill
\begin{minipage}[t]{0.5\textwidth}
\centering
\includegraphics[width=.9\textwidth]{./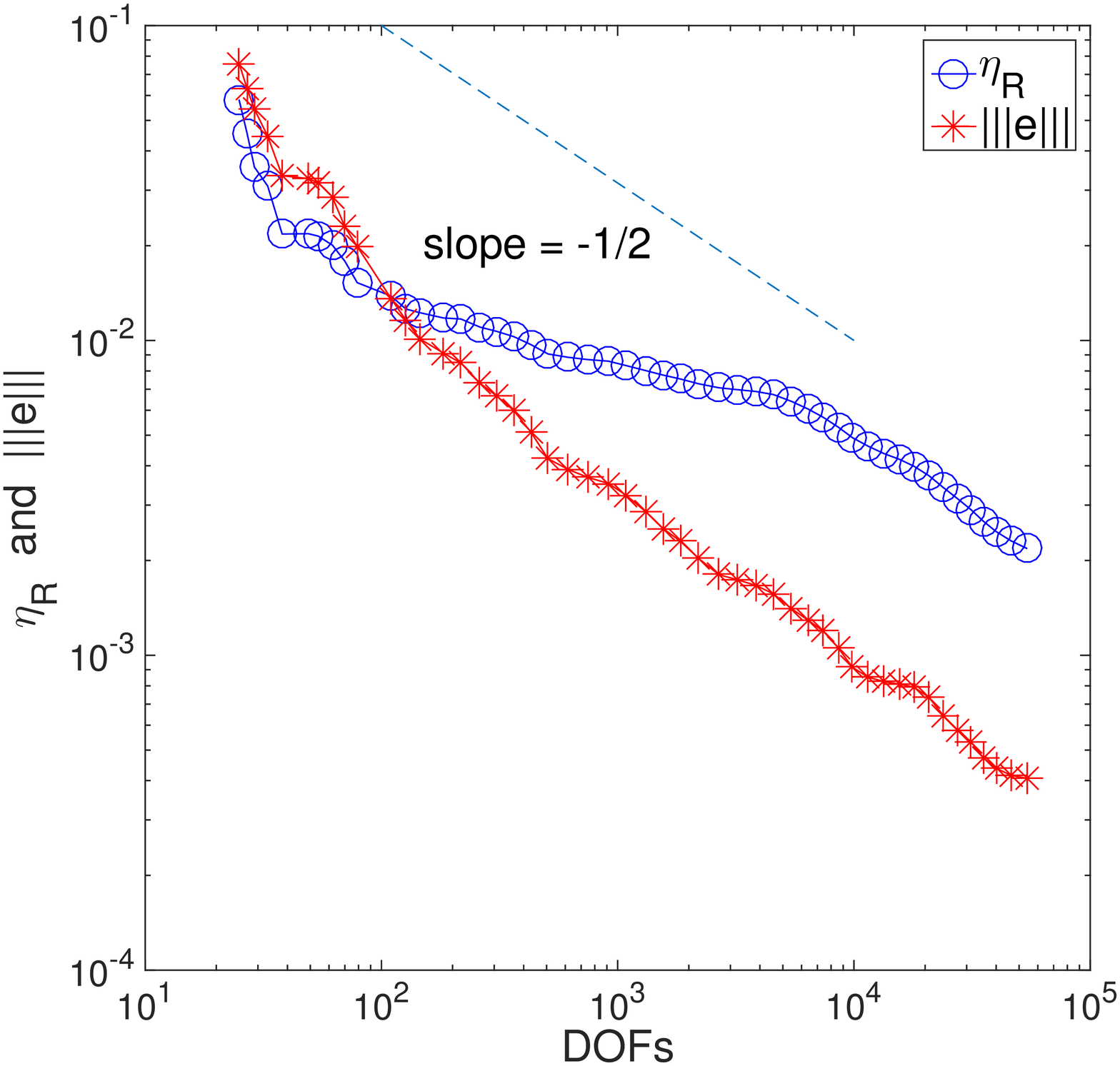}
%\caption{Example 1 - $\enorm{e}$ and $\eta$ with $\epsilon = 10^{-3}$}
%\label{fig:res1e-3}
\end{minipage}
\caption{Example 1 - $\eta$ and $\enorm{e}$ with $\epsilon=10^{-4}$(left), $\epsilon=10^{-3}$(right)}
\label{fig:res1e-43}
\end{figure}

\begin{figure}[htbp]
\centering
\begin{minipage}[t]{0.5\textwidth}
\centering
\includegraphics[width=.9\textwidth]{./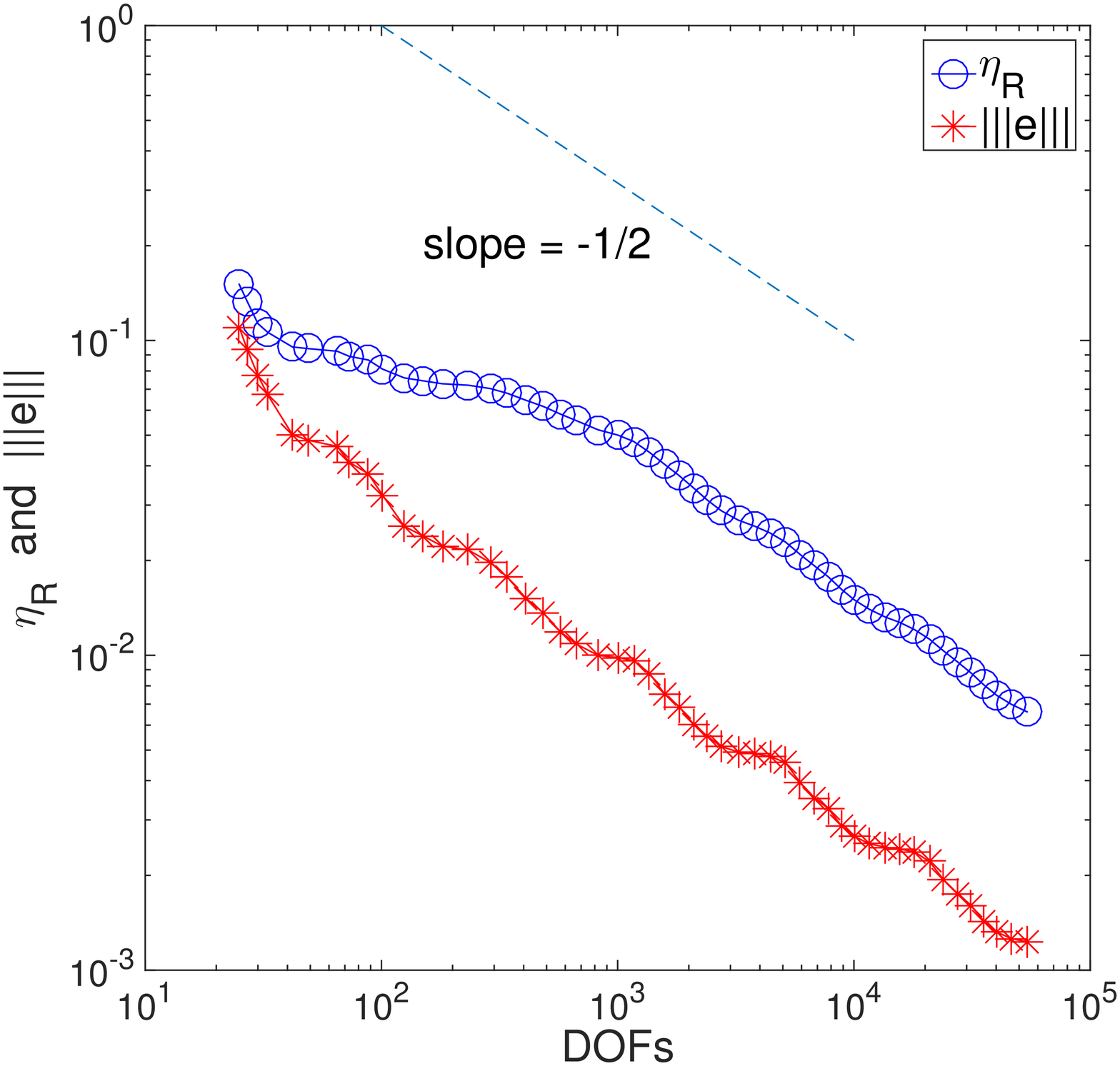}
\end{minipage}\hfill
\begin{minipage}[t]{0.5\textwidth}
\centering
\includegraphics[width=.9\textwidth]{./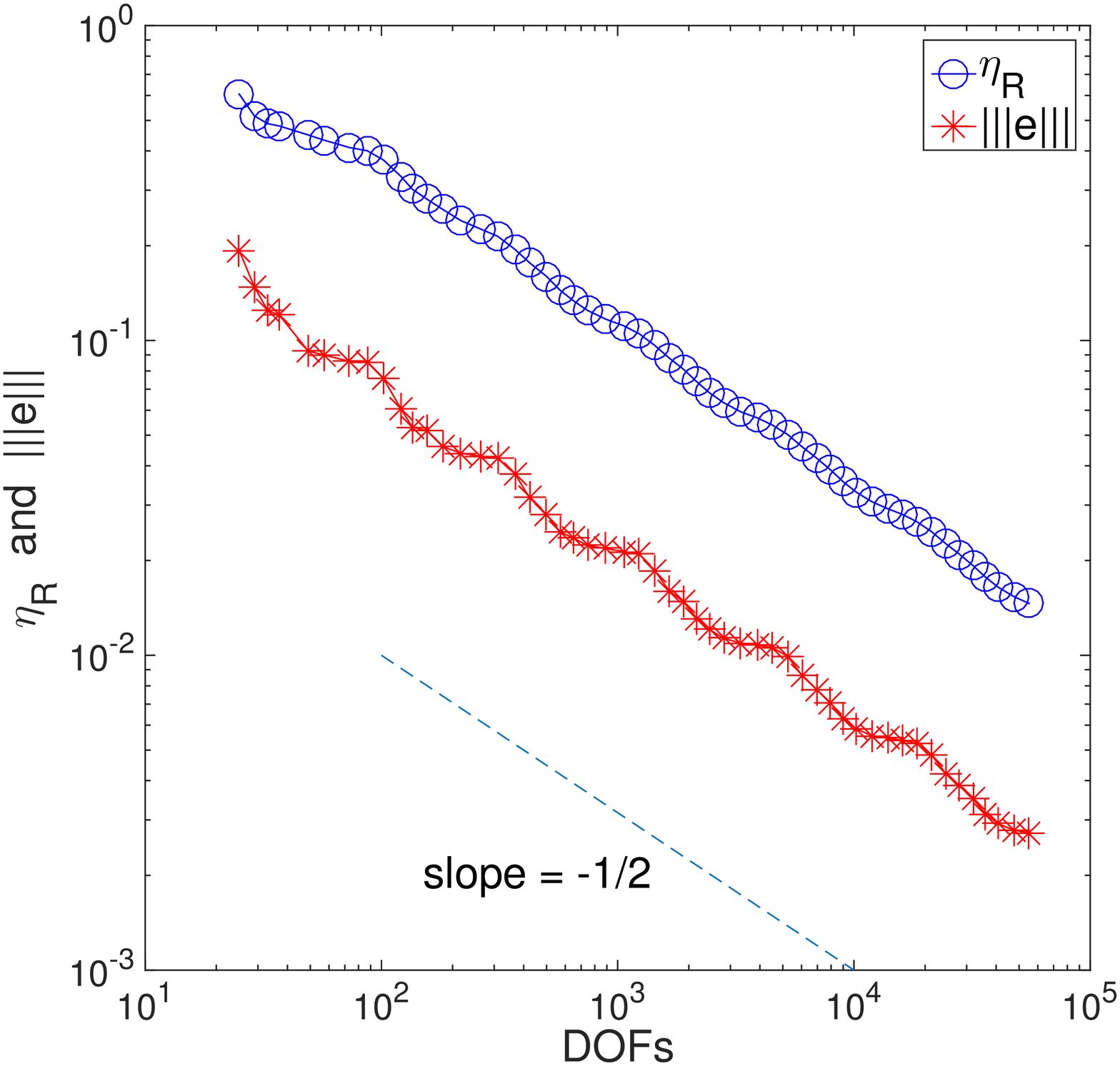}
\end{minipage}
\caption{Example 1 - $\eta$ and $\enorm{e}$ with $\epsilon=10^{-2}$(left), $\epsilon=10^{-1}$(right)}
\label{fig:res1e-21}
\end{figure}

%\begin{figure}[htbp]
%\centering
%\begin{minipage}[t]{0.5\textwidth}
%\centering
%\includegraphics[width=.9\textwidth]{./}
%\end{minipage}\hfill
%\begin{minipage}[t]{0.5\textwidth}
%\centering
%\includegraphics[width=.9\textwidth]{./}
%\end{minipage}
%\caption{Example 1 - $\eta$ and $\enorm{e}$ with $\epsilon=1$(left), $\epsilon=10$(right)}
%\label{fig:res1e01}
%\end{figure}

%\begin{figure}[htbp]
%\centering
%\includegraphics[width=.5\textwidth]{./srcReaction/res_eps1e-3_DOF5w.eps}
%\caption{Example 1 - Error and $\eta$ with $\epsilon = 10^{-3}$}
%\label{fig:res1e-3}
%\end{figure}
%
%\begin{figure}[htbp]
%\centering
%\includegraphics[width=.5\textwidth]{./srcReaction/res_eps1e-2_DOF5w.eps}
%\caption{Example 1 - Error and $\eta$ with $\epsilon = 10^{-2}$}
%\label{fig:res1e-2}
%\end{figure}
%
%\begin{figure}[htbp]
%\centering
%\includegraphics[width=.5\textwidth]{./srcReaction/res_eps1e-1_DOF5w.eps}
%\caption{Example 1 - Error and $\eta$ with $\epsilon = 10^{-1}$}
%\label{fig:res1e-1}
%\end{figure}
%
%
%\begin{figure}[htbp]
%\centering
%\includegraphics[width=.5\textwidth]{./srcReaction/res_eps1e0_DOF5w.eps}
%\caption{Example 1 - Error and $\eta$ with $\epsilon = 1$}
%\label{fig:res1e0}
%\end{figure}
%
%\begin{figure}[htbp]
%\centering
%\includegraphics[width=.5\textwidth]{./srcReaction/res_eps1e1_DOF5w.eps}
%\caption{Example 1 - Error and $\eta$ with $\epsilon = 10$}
%\label{fig:res1e1}
%\end{figure}

\begin{figure}[htbp]
\centering
\begin{minipage}[t]{0.5\textwidth}
\centering
\includegraphics[width=.9\textwidth]{./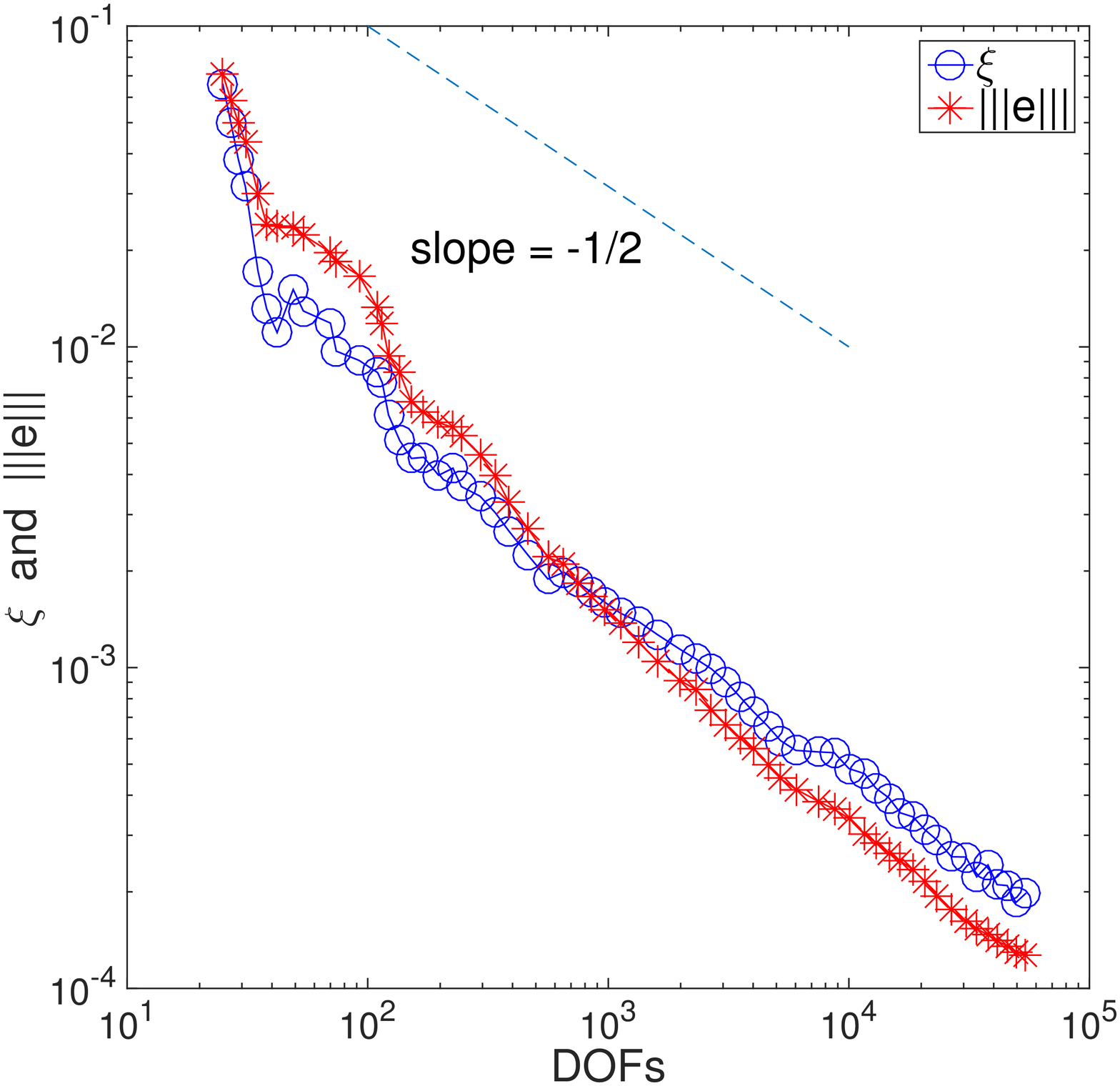}
\end{minipage}\hfill
\begin{minipage}[t]{0.5\textwidth}
\centering
\includegraphics[width=.9\textwidth]{./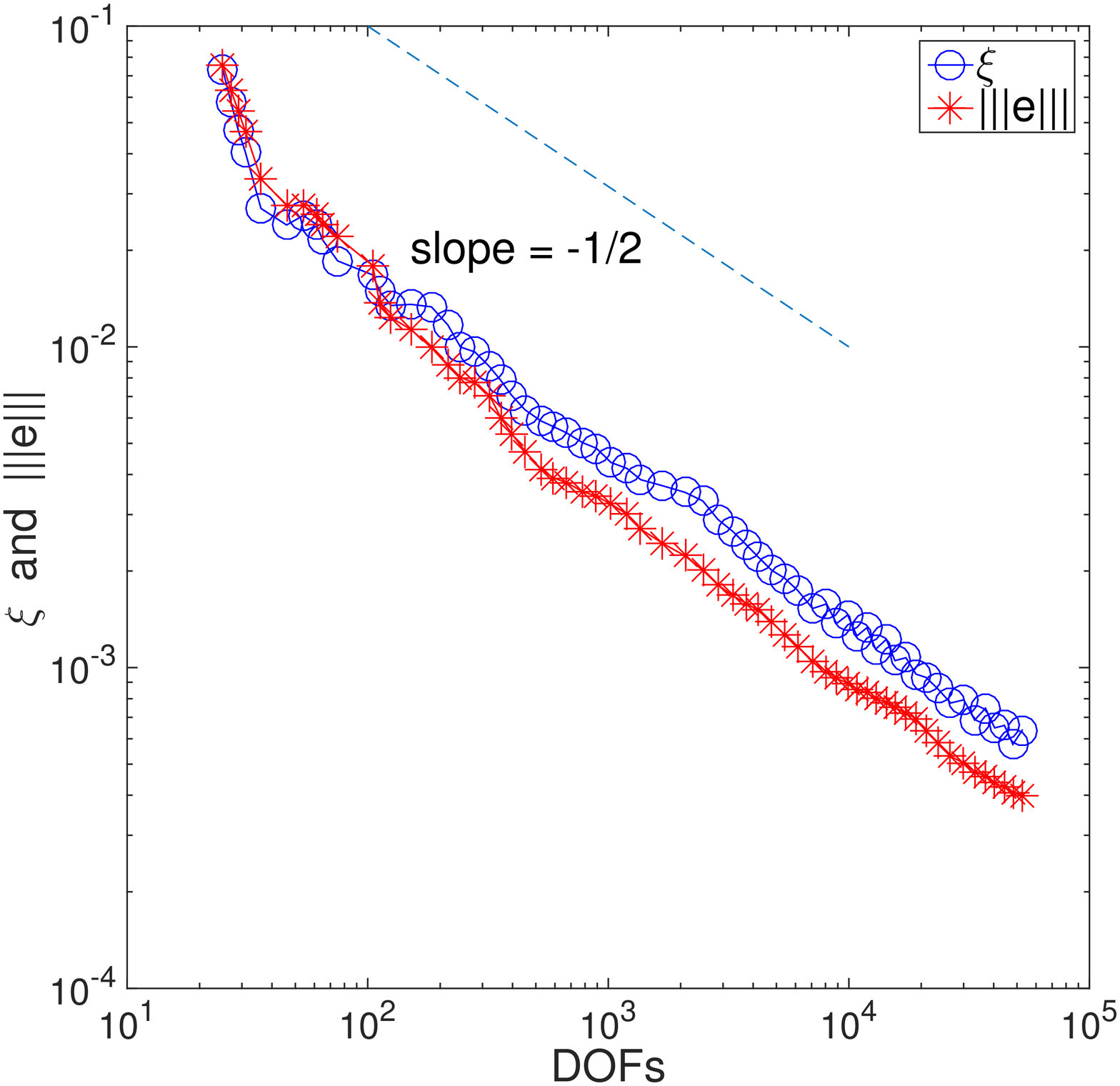}
\end{minipage}
\caption{Example 1 - $\xi$ and $\enorm{e}$ with $\epsilon=10^{-4}$(left), $\epsilon=10^{-3}$(right)}
\label{fig:RT1e-43}
\end{figure}

\begin{figure}[htbp]
\centering
\begin{minipage}[t]{0.5\textwidth}
\centering
\includegraphics[width=.9\textwidth]{./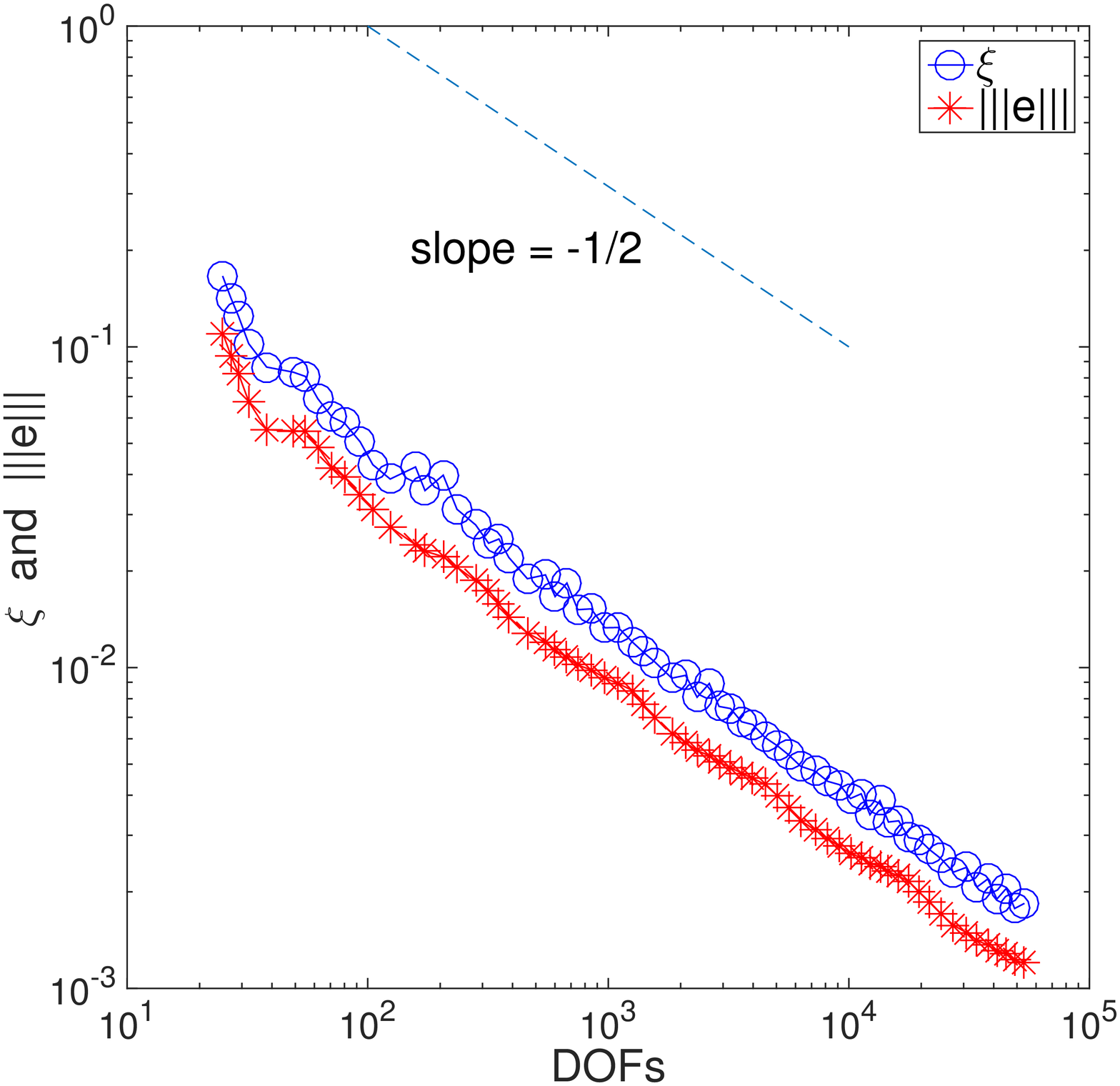}
\end{minipage}\hfill
\begin{minipage}[t]{0.5\textwidth}
\centering
\includegraphics[width=.9\textwidth]{./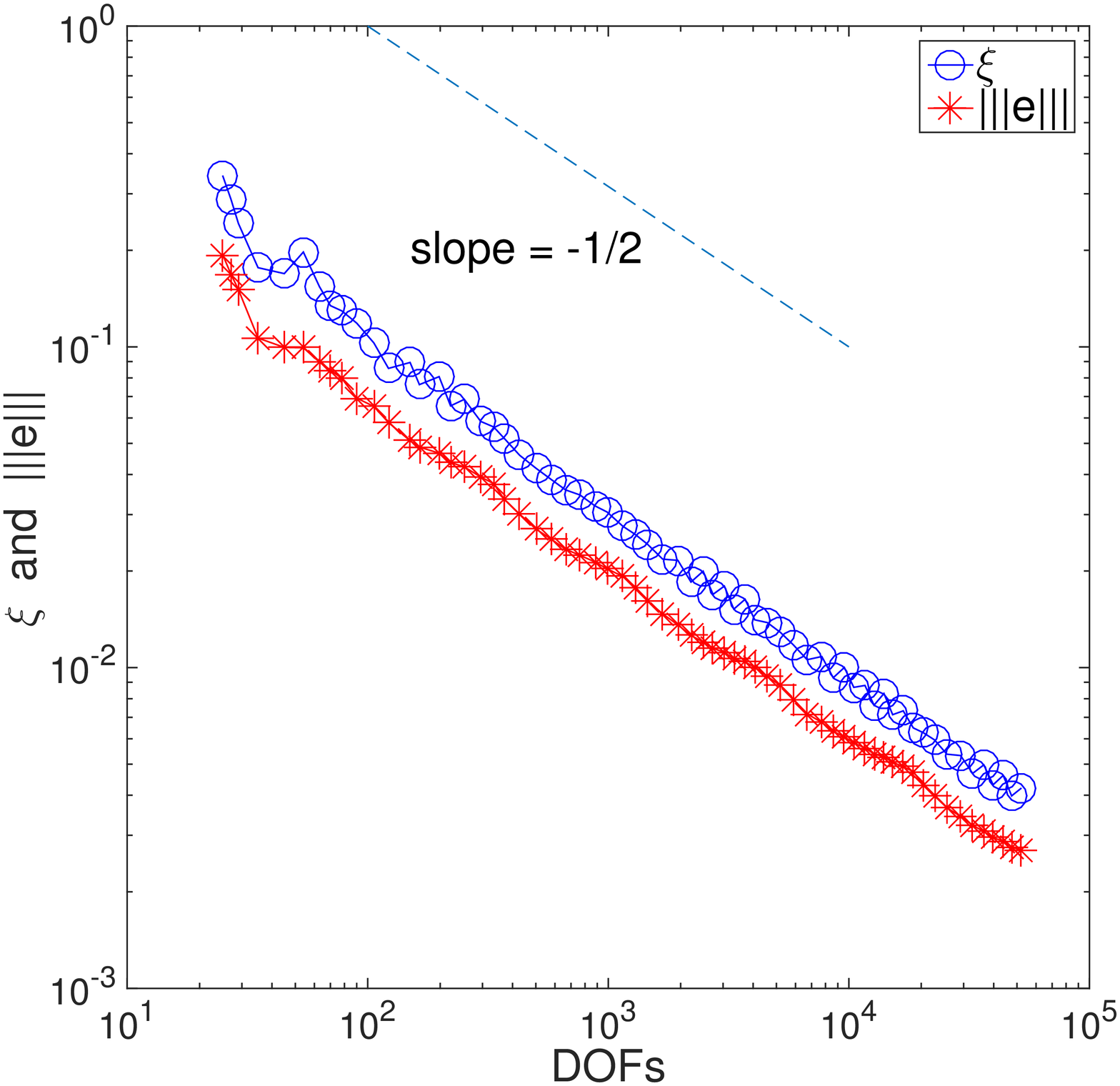}
\end{minipage}
\caption{Example 1 - $\xi$ and $\enorm{e}$ with $\epsilon=10^{-2}$(left), $\epsilon=10^{-1}$(right)}
\label{fig:RT1e-21}
\end{figure}

%\begin{figure}[htbp]
%\centering
%\begin{minipage}[t]{0.5\textwidth}
%\centering
%\includegraphics[width=.9\textwidth]{./}
%\end{minipage}\hfill
%\begin{minipage}[t]{0.5\textwidth}
%\centering
%\includegraphics[width=.9\textwidth]{./}
%\end{minipage}
%\caption{Example 1 - $\xi$ and $\enorm{e}$ with $\epsilon=1$(left), $\epsilon=10$(right)}
%\label{fig:RT1e01}
%\end{figure}
%% subsubsection Effectivity during adaptive mesh refinement (end)

% subsection Example 1 (end)

\subsection{Test Problem 2} % (fold)
Test problem 2 has $f=0$ and the exact solution 
\[
    u(x,y) = e^{-(x+1)/\sqrt{\epsilon}} + e^{-(y+1)/\sqrt{\epsilon}}
    \quad \text{with} \quad \epsilon = 10^{-4},
\]
which displays boundary layers along $x=-1$ and $y=-1$.

We perform adaptive mesh refinement
with stopping criterion
    $\enorm{e} \leq 0.1 \enorm{u}$
and compare the results obtained using $\eta$ and $\xi$.

The mesh generated by $\xi$ is shown in Figure \ref{fig:Ex23mesh} (left),
where major refinements are along the boundary layers.
%The mesh generated by $\eta$ and $\xi$ are quite similar,
From Table \ref{tab:Ex2TOL01},
it is easy to see that the residual estimator is less accurate than the hybrid estimator.
Figure \ref{fig:Ex2errorTOL01} again shows that
the effectivity index of $\eta$ varies more widely than that of $\xi$
during the mesh refinement procedure.
The robustness of $\xi$ as well as $\eta$ can be seen from Figure \ref{fig:Ex2errorTOL01} as 
the optimal error decay rate is observed on very coarse meshes (DOFs$\approx$100).
\begin{figure}[htbp]
\centering
\begin{minipage}[t]{0.5\textwidth}
\centering
\includegraphics[width=.7\textwidth]{./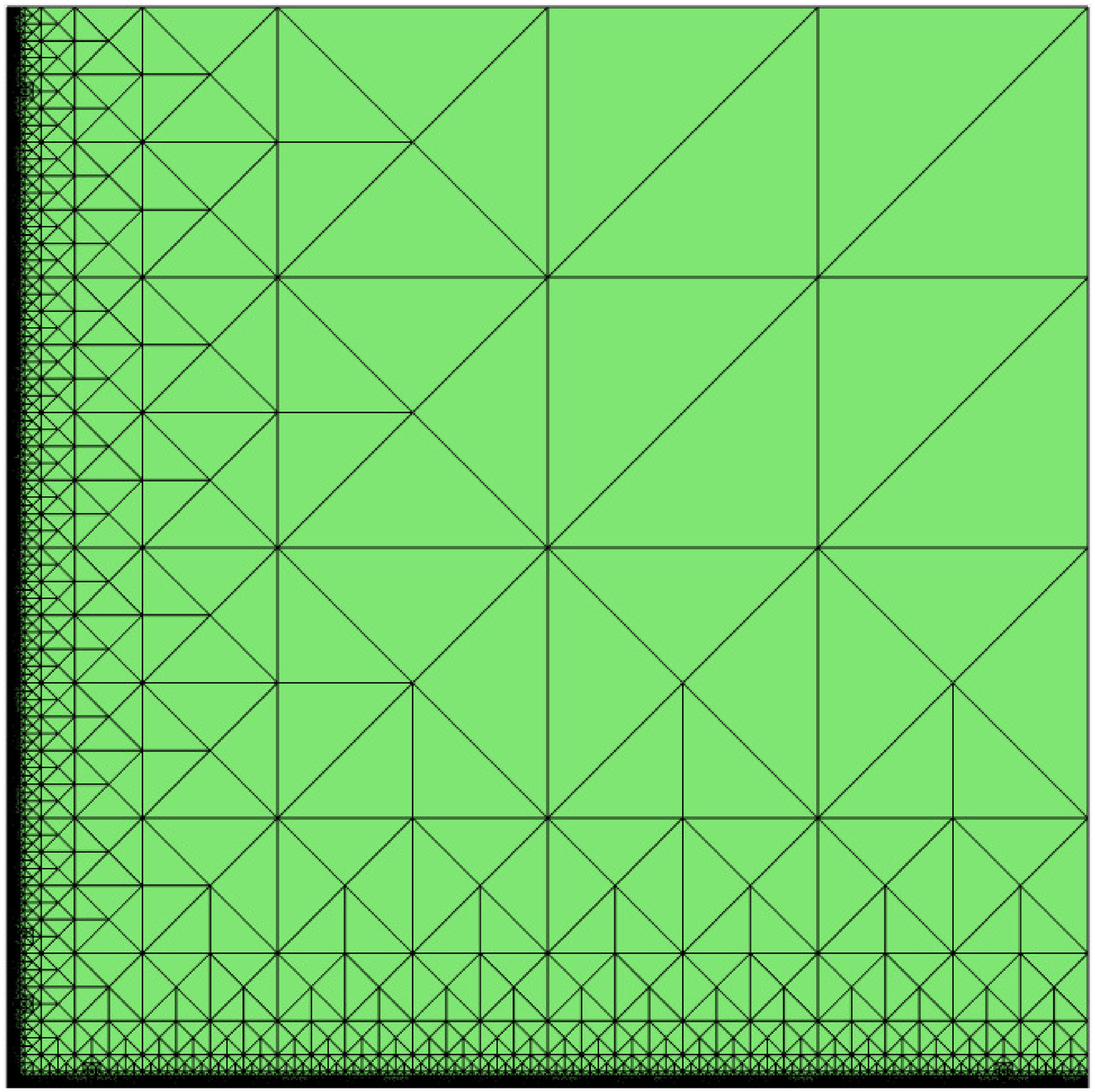}
\end{minipage}\hfill
\begin{minipage}[t]{0.5\textwidth}
\centering
\includegraphics[width=.7\textwidth]{./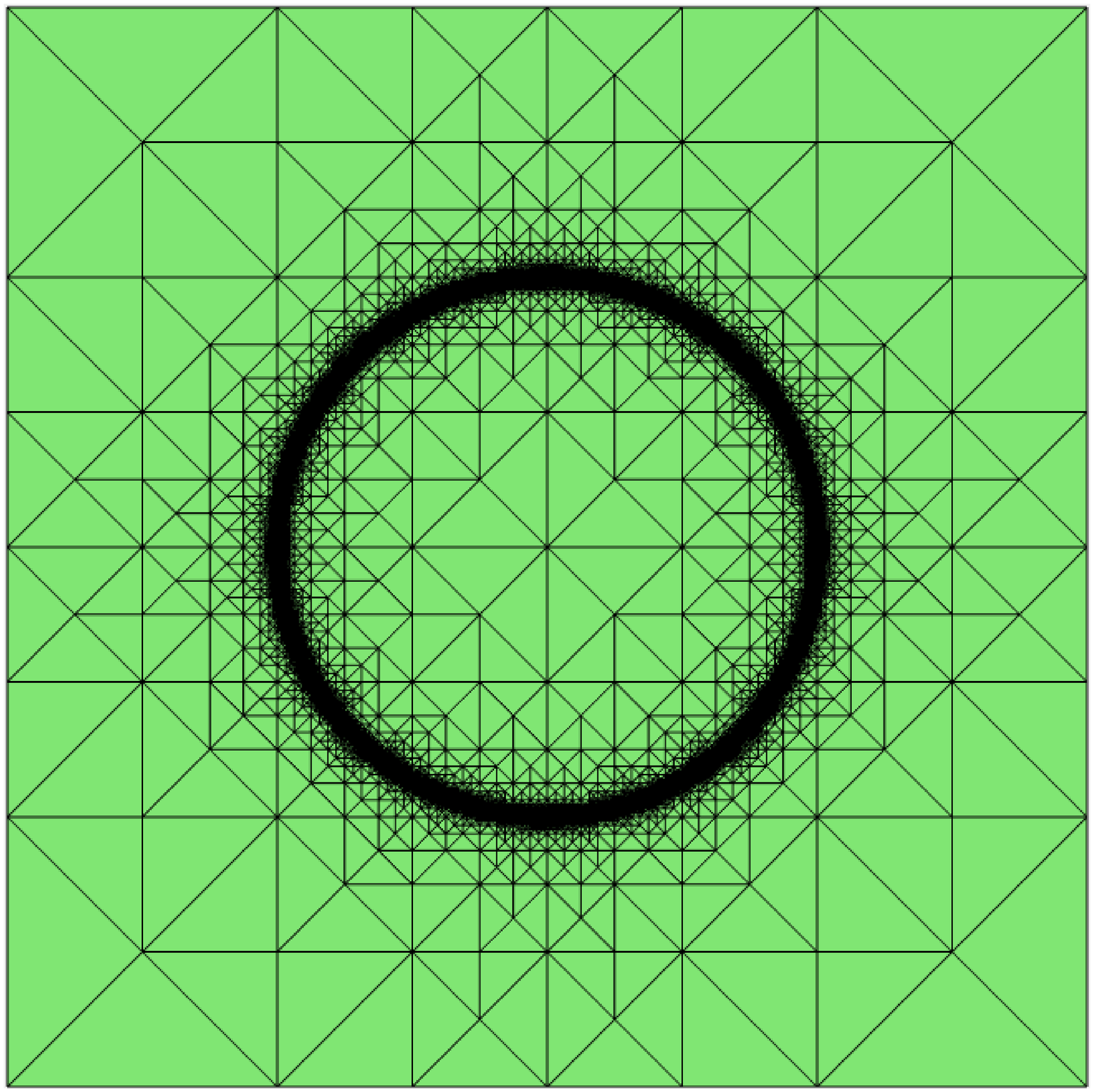}
\end{minipage}
\caption{Meshes generated using $\xi$: Example 2(left) and Example 3(right)}
\label{fig:Ex23mesh}
\end{figure}

\begin{figure}[htbp]
\centering
\begin{minipage}[t]{0.5\textwidth}
\centering
\includegraphics[width=.9\textwidth]{./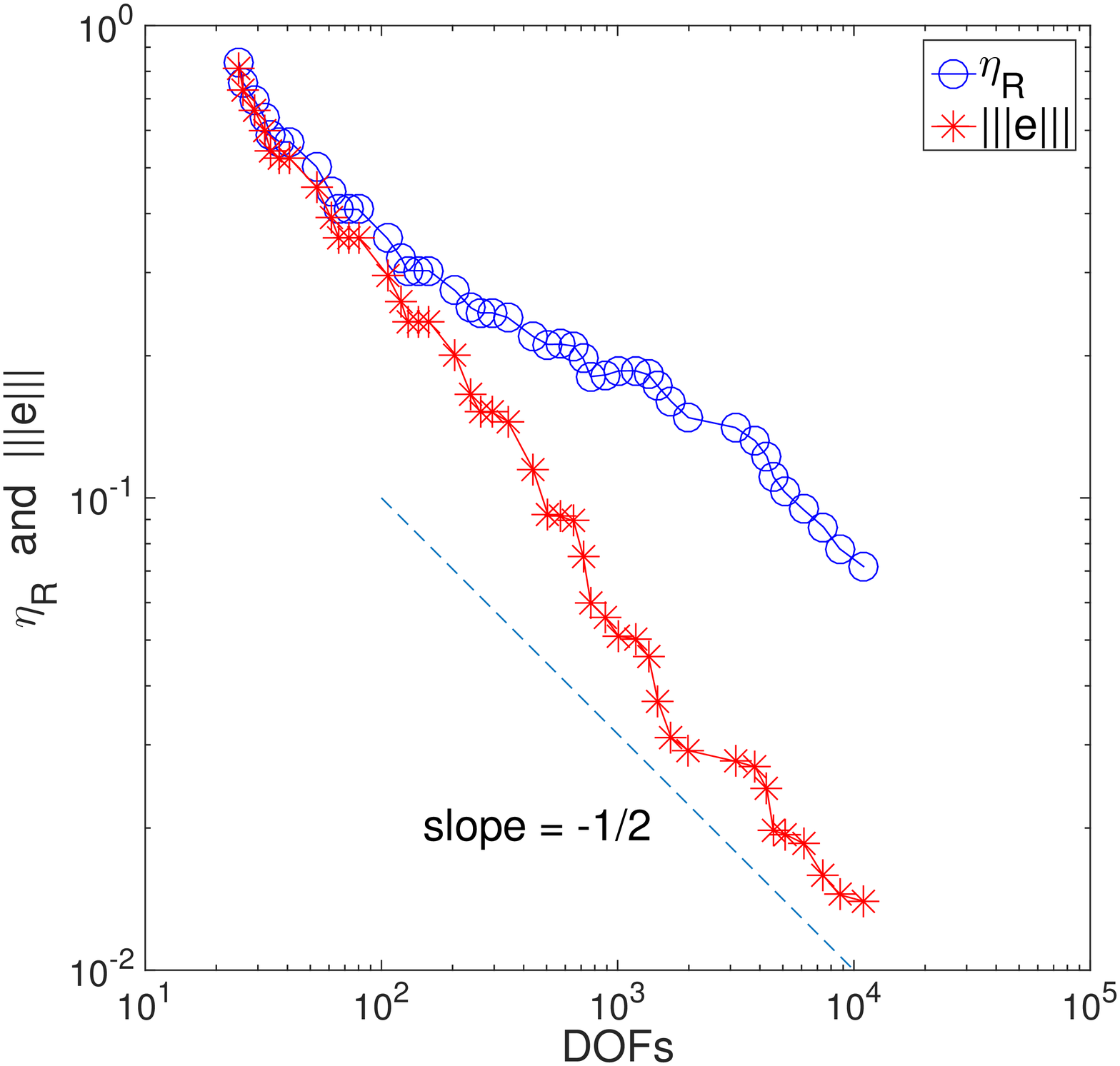}
\end{minipage}\hfill
\begin{minipage}[t]{0.5\textwidth}
\centering
\includegraphics[width=.9\textwidth]{./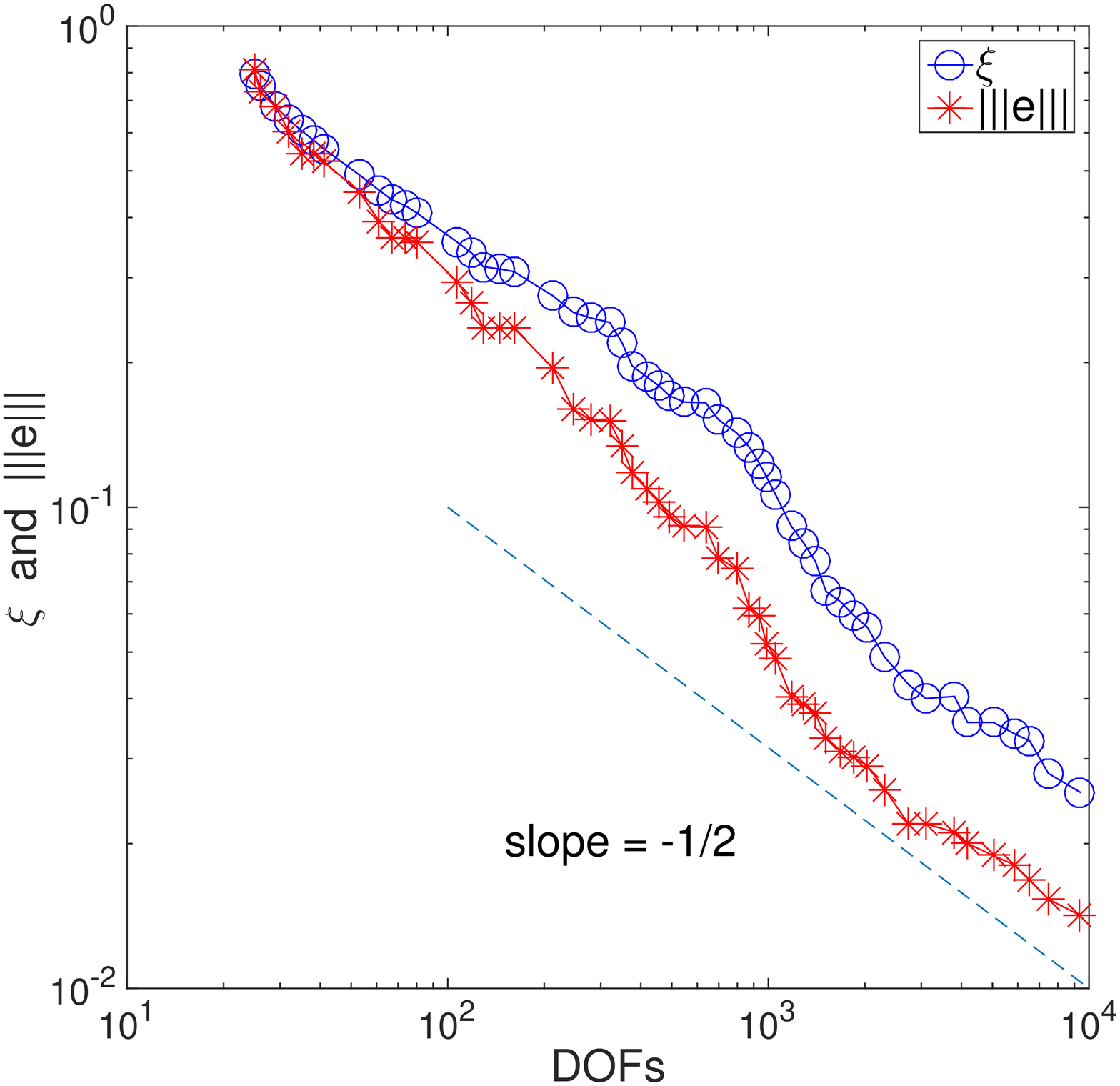}
\end{minipage}
\caption{Example 2 - error plot for $\eta$(left) and $\xi$(right)}
\label{fig:Ex2errorTOL01}
\end{figure}

%\begin{figure}[htbp]
%\centering
%\begin{minipage}[t]{0.5\textwidth}
%\centering
%\includegraphics[width=.9\textwidth]{./}
%\end{minipage}\hfill
%\begin{minipage}[t]{0.5\textwidth}
%\centering
%\includegraphics[width=.9\textwidth]{./}
%\end{minipage}
%\caption{Example 2 - plot of error and estimator for $\eta$(left), $\xi$(right)}
%\label{fig:Ex2errorTOL005}
%\end{figure}

\begin{table}
\caption{Example 2 - results for $\eta$ and $\xi$}
\label{tab:Ex2TOL01}
\begin{center}
\begin{tabular}{|c|c|c|c|}
\hline 
estimator &  DOFs & $\enorm{e}/\enorm{u}$ & eff-ind \\
\hline 
$\eta$ &  10987  & 9.8E-2 & 5.11 \\
\hline 
$\xi$ &  9383 & 9.9E-2 &  1.80 \\
\hline
\end{tabular}
\end{center}
\end{table}

%\begin{table}
%\caption{Example 2 - results for $\eta$ and $\xi$ with \cdf{$\sigma_{\text{rel}}=0.05$}}
%\label{tab:Ex2TOL005}
%\begin{center}
%\begin{tabular}{|c|c|c|c|}
%\hline 
%estimator &  DOFs & $\enorm{e}/\enorm{u}$ & eff-ind \\
%\hline 
%$\eta$ &  35867 & 4.9E-2 & 5.51 \\
%\hline 
%$\xi$ &  41319 & 4.4E-2 &  1.72 \\
%\hline
%\end{tabular}
%\end{center}
%\end{table}

% subsection Example 2 (end)

\subsection{Test Problem 3} % (fold)
The exact solution is chosen as 
\[
    u(x,y) = \tanh\left(\epsilon^{-1/2}(x^2+y^2-\frac{1}{4})\right) 
    \quad \text{with} \quad \epsilon = 10^{-4},
\]
which displays an interior layer on a circle with radius $\frac{1}{2}$.
%Note that the corresponding right-hand side $f$ also displays the same singular layer.

The stopping criterion for the adaptive mesh refinement is chosen as 
$\enorm{e} < 0.01\enorm{u}$.
The mesh generated by $\xi$ is shown in Figure \ref{fig:Ex23mesh} (right), 
where major refinements are along the interior layer.
In terms of accuracy or effectivity,
from Table \ref{tab:Ex3TOL001} and Figure \ref{fig:Ex3errorTOL001},
the same conclusion can be drawn as in Example 2:
the residual estimator is less accurate than the hybrid estimator
and its effectivity index varies more widely during the adaptive mesh refinement.
Figure \ref{fig:Ex3errorTOL001} shows that
the error starts to decay in optimal rate on very coarse meshes (DOFs$\approx$100), 
which demonstrates the robustness of the estimators $\eta$ and $\xi$.
%Note that for each estimator, the optimal decay of the true error is observed when the mesh is coarse.

%\begin{figure}[htbp]
%\centering
%\begin{minipage}[t]{0.5\textwidth}
%\centering
%\includegraphics[width=.7\textwidth]{./}
%\end{minipage}\hfill
%\begin{minipage}[t]{0.5\textwidth}
%\centering
%\includegraphics[width=.7\textwidth]{./srcReaction/Ex3mesh_xi.eps}
%\end{minipage}
%\caption{Example 3 - meshes generated using $\eta$(left) and $\xi$(right)}
%\label{fig:Ex3mesh}
%\end{figure}

\begin{figure}[htbp]
\centering
\begin{minipage}[t]{0.5\textwidth}
\centering
\includegraphics[width=.9\textwidth]{./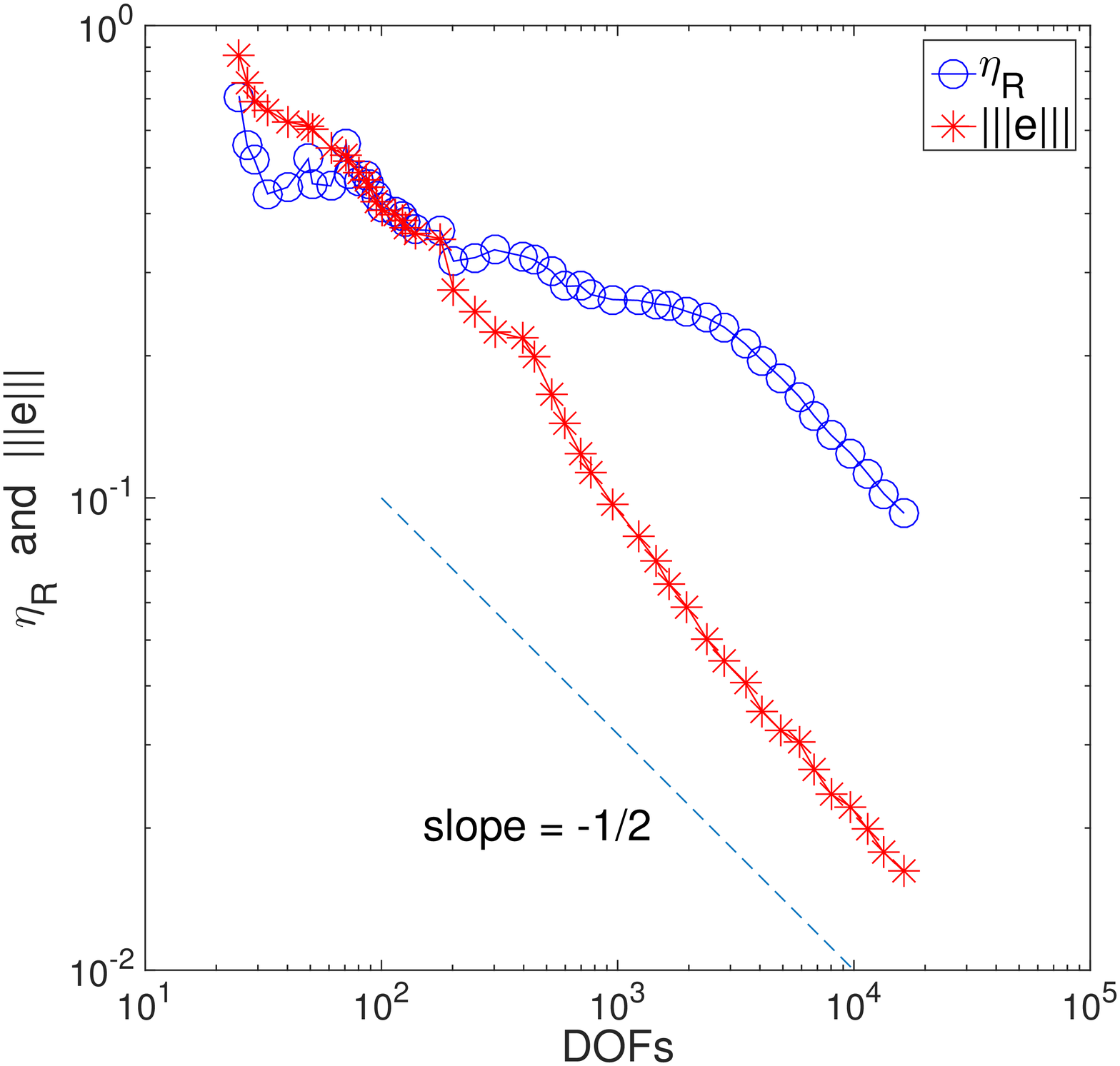}
\end{minipage}\hfill
\begin{minipage}[t]{0.5\textwidth}
\centering
\includegraphics[width=.9\textwidth]{./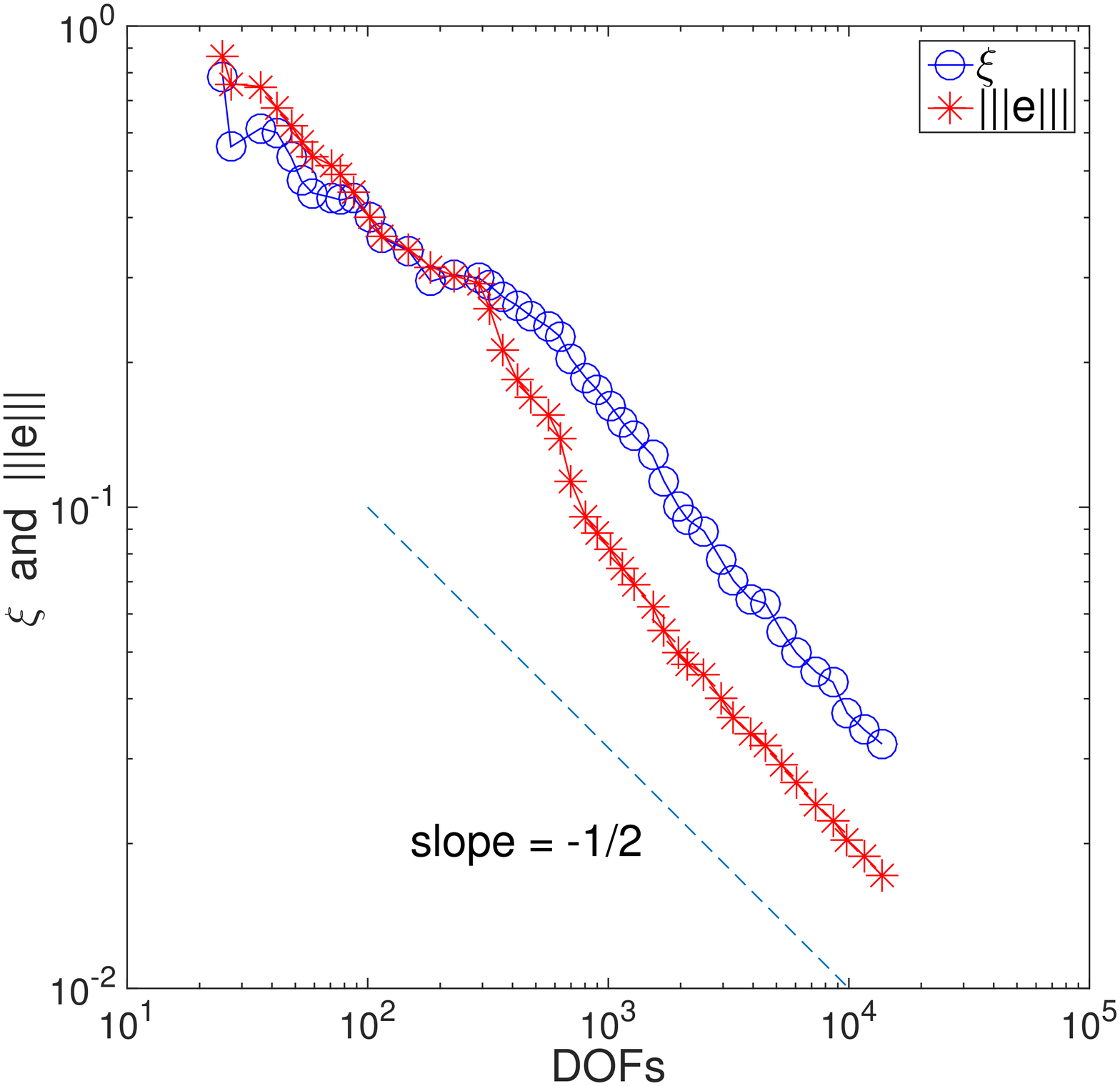}
\end{minipage}
\caption{Example 3 - error and estimator plot for $\eta$(left) and $\xi$(right)}
\label{fig:Ex3errorTOL001}
\end{figure}

\begin{table}
\caption{Example 3 - results for $\eta$ and $\xi$}
\label{tab:Ex3TOL001}
\begin{center}
\begin{tabular}{|c|c|c|c|}
\hline 
estimator &  DOFs & $\enorm{e}/\enorm{u}$ & eff-ind \\
\hline 
$\eta$ &  16217 & 9.2E-3 & 5.74 \\
\hline 
$\xi$ &  13664 & 9.8E-3 &  1.88 \\
\hline
\end{tabular}
\end{center}
\end{table}

%%%%%%%%%%%% section  (end) %%%%%%%%%%%%%%%%

\bibliography{cdfeng}

\begin{thebibliography}{10}

\bibitem{ainsworth2013confusion}
M.~Ainsworth, A.~Allendes, G.~R. Barrenechea, and R.~Rankin.
\newblock Fully computable a posteriori error bounds for stabilised {FEM}
  approximations of convection--reaction--diffusion problems in three
  dimensions.
\newblock {\em International Journal for Numerical Methods in Fluids},
  73(9):765--790, 2013.

\bibitem{ainsworth1999reaction}
M.~Ainsworth and I.~Babu{\v s}ka.
\newblock Reliable and robust a posteriori error estimation for singularly
  perturbed reaction-diffusion problems.
\newblock {\em SIAM Journal on Numerical Analysis}, 36(2):331--353, 1999.

\bibitem{ainsworthoden2000book}
M.~Ainsworth and J.T. Oden.
\newblock {\em A Posteriori Error Estimation in Finite Element Analysis},
  volume~37.
\newblock John Wiley \& Sons, 2000.

\bibitem{ainsworth2011confusion}
M.~Ainsworth and T.~Vejchodsk{\'y}.
\newblock Fully computable robust a posteriori error bounds for singularly
  perturbed reaction--diffusion problems.
\newblock {\em Numerische Mathematik}, 119(2):219--243, 2011.

\bibitem{ainsworth2014reaction}
M.~Ainsworth and T.~Vejchodsk{\'y}.
\newblock Robust error bounds for finite element approximation of
  reaction--diffusion problems with non-constant reaction coefficient in
  arbitrary space dimension.
\newblock {\em Computer Methods in Applied Mechanics and Engineering},
  281:184--199, 2014.

\bibitem{babuska1987}
I.~Babu{\v s}ka and A.~Miller.
\newblock A feedback element method with a posteriori error estimation: Part i.
  the finite element method and some basic properties of the a posteriori error
  estimator.
\newblock {\em Computer Methods in Applied Mechanics and Engineering},
  61(1):1--40, 1987.

\bibitem{babuska1978}
I.~Babu{\v s}ka and W.~C. Rheinboldt.
\newblock Error estimates for adaptive finite element computations.
\newblock {\em SIAM Journal on Numerical Analysis}, 15(4):736--754, 1978.

\bibitem{bankxuzheng}
R.~Bank, J.~Xu, and B.~Zheng.
\newblock {Superconvergent derivative recovery for Lagrange triangular elements
  of degree p on unstructured grids}.
\newblock {\em SIAM Journal on Numerical Analysis}, 45(5):2032--2046, 2007.

\bibitem{bernardi2000}
C.~Bernardi and R.~Verf{\"u}rth.
\newblock Adaptive finite element methods for elliptic equations with
  non-smooth coefficients.
\newblock {\em Numerische Mathematik}, 85(4):579--608, 2000.

\bibitem{localL2}
D.~Cai and Z.~Cai.
\newblock A hybrid a posteriori error estimator for conforming finite element
  approximations.
\newblock {\em Computer Methods in Applied Mechanics and Engineering}, 339:320
  -- 340, 2018.

\bibitem{equiflux}
D.~Cai, Z.~Cai, and S.~Zhang.
\newblock Robust equilibrated a posteriori error estimator for higher order
  finite element approximations to diffusion problems.
\newblock {\em Numerische Mathematik}, 144(1):1--21, 2020.

\bibitem{caizhang2010}
Z.~Cai and S.~Zhang.
\newblock Flux recovery and a posteriori error estimators: conforming elements
  for scalar elliptic equations.
\newblock {\em SIAM Journal on Numerical Analysis}, 48(2):578--602, 2010.

\bibitem{carstensen2004}
C.~Carstensen.
\newblock All first-order averaging techniques for a posteriori finite element
  error control on unstructured grids are efficient and reliable.
\newblock {\em Mathematics of Computation}, 73(247):1153--1165, 2004.

\bibitem{carstensen2010competition}
C.~Carstensen and C.~Merdon.
\newblock Estimator competition for poisson problems.
\newblock {\em Journal of Computational Mathematics}, pages 309--330, 2010.

\bibitem{ciarletbook}
P.~Ciarlet.
\newblock {\em The Finite Element Method for Elliptic Problems}.
\newblock Society for Industrial and Applied Mathematics, 2002.

\bibitem{dorfler1996}
W.~D{\"o}rfler.
\newblock A convergent adaptive algorithm for poisson’s equation.
\newblock {\em SIAM Journal on Numerical Analysis}, 33(3):1106--1124, 1996.

\bibitem{ZZ2006}
F.~Fierro and A.~Veeser.
\newblock A posteriori error estimators, gradient recovery by averaging, and
  superconvergence.
\newblock {\em Numerische Mathematik}, 103(2):267--298, 2006.

\bibitem{ZZsafeguard}
F.~Fierro and A.~Veeser.
\newblock A safeguarded {Zienkiewicz-Zhu} estimator.
\newblock In {\em Numerical Mathematics and Advanced Applications}, pages
  269--276. Springer, 2006.

\bibitem{SUPG1992}
L.P. Franca, S.L. Frey, and T.J.R. Hughes.
\newblock Stabilized finite element methods: I. application to the
  advective-diffusive model.
\newblock {\em Computer Methods in Applied Mechanics and Engineering},
  95(2):253--276, 1992.

\bibitem{AFEM2002conv}
P.~Morin, R.H. Nochetto, and K.G. Siebert.
\newblock Convergence of adaptive finite element methods.
\newblock {\em SIAM Review}, 44(4):631--658, 2002.

\bibitem{nagazhang2005}
A.~Naga and Z.~Zhang.
\newblock {The polynomial-preserving recovery for higher order finite element
  methods in 2D and 3D}.
\newblock {\em Discrete and Continuous Dynamical Systems Series B}, 5(3):769,
  2005.

\bibitem{ovall2006}
J.S. Ovall.
\newblock Two dangers to avoid when using gradient recovery methods for finite
  element error estimation and adaptivity.
\newblock Technical report, Technical report 6, Max-Planck-Institute fur
  Mathematick in den Naturwissenschaften, Bonn, Germany, 2006.

\bibitem{petzoldt2002}
M.~Petzoldt.
\newblock A posteriori error estimators for elliptic equations with
  discontinuous coefficients.
\newblock {\em Advances in Computational Mathematics}, 16(1):47--75, 2002.

\bibitem{sangalli2001confusion}
G.~Sangalli.
\newblock A robust a posteriori estimator for the residual-free bubbles method
  applied to advection-diffusion problems.
\newblock {\em Numerische Mathematik}, 89(2):379--399, 2001.

\bibitem{sangalli2005norm}
G.~Sangalli.
\newblock A uniform analysis of nonsymmetric and coercive linear operators.
\newblock {\em SIAM Journal on Mathematical Analysis}, 36(6):2033--2048, 2005.

\bibitem{sangalli2008confusion}
G.~Sangalli.
\newblock Robust a-posteriori estimator for advection-diffusion-reaction
  problems.
\newblock {\em Mathematics of Computation}, 77(261):41--70, 2008.

\bibitem{purdue1972}
E.~G. Sewell.
\newblock {\em Automatic generation of triangulations for piecewise polynomial
  approximation}.
\newblock PhD thesis, Purdue University, West Lafayette, IN, 1972.

\bibitem{stynes2005confusion}
M.~Stynes.
\newblock Steady-state convection-diffusion problems.
\newblock {\em Acta Numerica}, 14:445--508, 2005.

\bibitem{verf2015confusion}
L.~Tobiska and R.~Verf{\"u}rth.
\newblock Robust a posteriori error estimates for stabilized finite element
  methods.
\newblock {\em IMA Journal of Numerical Analysis}, 35(4):1652--1671, 2015.

\bibitem{verf1994}
R.~Verf{\"u}rth.
\newblock A posteriori error estimation and adaptive mesh-refinement
  techniques.
\newblock {\em Journal of Computational and Applied Mathematics}, 50(1):67 --
  83, 1994.

\bibitem{verf1998confusion}
R.~Verf{\"u}rth.
\newblock A posteriori error estimators for convection-diffusion equations.
\newblock {\em Numerische Mathematik}, 80(4):641--663, 1998.

\bibitem{verf1998reaction}
R.~Verf{\"u}rth.
\newblock Robust a posteriori error estimators for a singularly perturbed
  reaction-diffusion equation.
\newblock {\em Numerische Mathematik}, 78(3):479--493, 1998.

\bibitem{verf2005confusion}
R.~Verf{\"u}rth.
\newblock Robust a posteriori error estimates for stationary
  convection-diffusion equations.
\newblock {\em SIAM Journal on Numerical Analysis}, 43(4):1766--1782, 2005.

\bibitem{ZZ1987}
O.C. Zienkiewicz and J.Z. Zhu.
\newblock A simple error estimator and adaptive procedure for practical
  engineerng analysis.
\newblock {\em International Journal for Numerical Methods in Engineering},
  24(2):337--357, 1987.

\bibitem{ZZ1992}
O.C. Zienkiewicz and J.Z. Zhu.
\newblock The superconvergent patch recovery and a posteriori error estimates.
  part 1: The recovery technique.
\newblock {\em International Journal for Numerical Methods in Engineering},
  33(7):1331--1364, 1992.

\end{thebibliography}
\bibliographystyle{plain}
\end{document}